\documentclass{imsart}
\textwidth=16cm
\setlength{\textheight}{21.5cm}
\setlength{\oddsidemargin}{.5cm}
\setlength{\evensidemargin}{-.5cm}
\setlength{\topmargin}{-.5cm}
\setlength{\abovedisplayskip}{3mm}
\setlength{\belowdisplayskip}{3mm}
\setlength{\abovedisplayshortskip}{3mm}
\setlength{\belowdisplayshortskip}{3mm}
\raggedbottom
\parskip=1.5mm

\usepackage{amsmath}
\usepackage{amsthm}
\usepackage{amsfonts}
\usepackage{bbm}
\usepackage{color}
\bibliographystyle{plain}

\theoremstyle{plain}  
\newtheorem{theorem}{Theorem}[section] 
\newtheorem{lemma}[theorem]{Lemma} 
\newtheorem{proposition}[theorem]{Proposition}

\theoremstyle{definition} 

\newtheorem{example}[theorem]{Example}

\theoremstyle{remark} 
\newtheorem{remark}{Remark}

\newcommand{\E}{\mathbb{E}}
\newcommand{\PP}{\mathbb{P}}

\newcommand{\toop}{\stackrel{\PP}{\longrightarrow}}
\newcommand{\schw}{\stackrel{d}{\longrightarrow}}
\newcommand{\stab}{\stackrel{st}{\longrightarrow}}

\newcommand{\toas}{\stackrel{\mbox{\tiny a.s.}}{\longrightarrow}}

\newcommand{\bee}{\begin{equation}}
\newcommand{\eee}{\end{equation}}
\newcommand{\beea}{\begin{array}}
\newcommand{\eeea}{\end{array}}

\begin{document}

\begin{frontmatter}

\title{On  U- and V-statistics for discontinuous It\^o semimartingales}
\runtitle{U- and V-statistics for semimartingales}

\begin{aug}
\author{\fnms{Mark} \snm{Podolskij}\ead[label=e1]{mpodolskij@math.au.dk}},
\author{\fnms{Christian} \snm{Schmidt}\ead[label=e2]{cschmidt@math.au.dk}}
\and
\author{\fnms{Mathias} \snm{Vetter}\corref{}\ead[label=e3]{vetterm@mathematik.uni-marburg.de}}

\runauthor{M.~Podolskij et al.}

\affiliation{Aarhus University\thanksmark{m1} and Philipps-Universit\"at Marburg\thanksmark{m2}}

\address{Mark Podolskij and Christian Schmidt\\
Department of Mathematics\\ Aarhus University\\ Ny Munkegade 118\\ 69120 Aarhus, Denmark\\
\printead{e1}
\phantom{E-mail: }\printead*{e2}}

\address{Mathias Vetter\\
Fachbereich Mathematik und Informatik\\ Philipps-Universit\"at Marburg\\  Hans-Meerwein-Stra\ss e 6, 35032 Marburg, Germany\\
\printead{e3}}
\end{aug}

\begin{abstract}
In this paper we examine the asymptotic theory for U-statistics and V-statistics of discontinuous It\^o semimartingales that are observed at high frequency. For different types of kernel functions we show laws of large numbers and associated stable central limit theorems. In most of the cases the limiting process will be conditionally centered Gaussian. The structure of the kernel function determines whether the jump and/or the continuous part of the semimartingale contribute to the limit.   
\end{abstract}



\end{frontmatter}

\section{Introduction} \label{Intro}
U- and V-statistics are classical objects in mathematical statistics. They were introduced in the works of Halmos \cite{Ha}, von Mises \cite{vM} and Hoeffding \cite{Hoe}, who provided (amongst others) the first asymptotic results for the case that the underlying random variables are independent and identically distributed. Since then there was a lot of progress in this field and the results were generalized in various directions. Under weak dependency assumptions asymptotic results are for instance shown in Borovkova et al. \cite{BBD2}, in Denker and Keller \cite{DK2} or more recently in Leucht \cite{AL}. The case of long memory processes is treated in Dehling and Taqqu \cite{DT1,DT2} or in L\'evy-Leduc et al. \cite{LBMTR}. For a general overview we refer to the books of Serfling \cite{Se} and Lee \cite{Le}. The methods applied in the proofs are quite different. One way are decomposition techniques like the famous Hoeffding decomposition or Hermite expansion as for example in Dehling and Taqqu \cite{DT1,DT2} or in L\'evy-Leduc et al. \cite{LBMTR}. Another approach is to use empirical process theory (see e.g. Beutner and Z\"ahle \cite{BZ1} or Podolskij et al. \cite{PSZ}). In Beutner and Z\"ahle \cite{BZ2} this method was recently combined with a continuous mapping approach to give a unifying way to treat the asymptotic theory for both U- and V-statistics in the degenerate and non-degenerate case. 

In this paper we are concerned with U- and V-statistics where the underlying data comes from a (possibly discontinuous) It\^o semimartingale of the form
\begin{align}\label{mint}
X_t=X_0 +\int_{0}^t b_s ds + \int_0^t \sigma_s dW_s + J_t, \quad t\geq 0,
\end{align}
where $W$ is a standard Brownian motion, $(b_s)_{s \geq 0}$ and $(\sigma_s)_{s \geq 0}$ are stochastic processes and $J_t$ is some jump process which will be specified later. Semimartingales play an important role in stochastic analysis because they form a large class of integrators with respect to which the It\^o integral can be defined. This is one reason why they are widely used in applications, for instance in mathematical finance. Since the seminal work of Delbaen and Schachermayer \cite{DS} it is further known that under certain no arbitrage conditions asset price processes must be semimartingales. Those price processes are nowadays observed very frequently, say for example at equidistant time points $0,1/n,\dots ,\left\lfloor nT\right\rfloor/n$ for a fixed $T \in \mathbb{R}$ and large $n$. A solid understanding of the statistical methods based on $X_0, X_{1/n},\dots,X_{\left\lfloor nT\right\rfloor/n}$ is therefore of great interest. In particular, we are interested in the limiting behavior when $n$ tends to infinity. This setting is known as high frequency or infill asymptotics and is an active field of research since the last two decades. For a comprehensive account we refer to the book of Jacod and Protter \cite{JP}.

In Podolskij et al. \cite{PSZ} an asymptotic theory for U-statistics of continuous It\^o semimartingales (i.e. those with $J_t \equiv 0$ in \eqref{mint}) was developed in the high frequency setting, where a U-statistic of order $d$ is defined by
\[
U(X,H)_t^n=\binom{n}{d}^{-1} \sum_{1\leq i_1 <...<i_d \leq \left\lfloor nt\right\rfloor} H(\sqrt{n}\Delta_{i_1}^n X,\dots,\sqrt{n}\Delta_{i_d}^n X),\qquad (\Delta_i^n X=X_{i/n}-X_{(i-1)/n}) 
\]   
for some sufficiently smooth kernel function $H:\mathbb{R}^d \to \mathbb{R}$. The authors have shown that $U(X,H)_t^n$ converges in probability to some functional of the volatility $\sigma$. Also an associated functional central limit theorem was further given, where the limiting process turned out to be conditionally Gaussian. 

In this paper we extend those results to the case of discontinuous It\^o semimartingales $X$. A general problem when dealing with discontinuous processes is that, depending on the function $H$, the U-statistic defined above might not converge to a finite limit at all. Therefore we will deal with slightly different V-statistics of order $d$, given by
\[
Y_t^n(H,X,l)=\frac{1}{n^l} \sum_{\textbf{i}\in \mathcal{B}_t^n (l)}\sum_{\textbf{j}\in \mathcal{B}_t^n (d-l)} H(\sqrt{n} \Delta_{\mathbf{i}}^n X, \Delta_{\mathbf{j}}^n X),
\]
where $0\leq l \leq d$ and
\[
\mathcal{B}_t^n(k) = \left\{\mathbf{i}=(i_1,\dots,i_k) \in \mathbb{N}^{k} | 1 \leq i_1, \dots,i_k \leq \left\lfloor nt\right\rfloor \right\}\quad \quad (k\in\mathbb{N}).
\] 
In the definition of $Y_t^n(H,X,l)$ we used a vector notation, that we will employ throughout the paper: For $\mathbf{s}=(s_1,\dots, s_d) \in \mathbb{R}^d$ and any stochastic process $(Z_s)_{s\in \mathbb{R}}$, we write
\[
Z_{\mathbf{s}}=(Z_{s_1},\dots,Z_{s_d}). 
\]
Comparing the definitions of the U- and V-statistics we see that they are of similar type if $l=d$. In fact, for continuous $X$, both statistics will converge to the same limit if $H$ is symmetric. A major difference is the missing scaling inside the function $H$ whenever $l \neq d$, and this is due to jumps. 

Already the case $d=1$ shows why we need different scalings for different functions $H$. Jacod \cite{J2} (among others) considers the statistics 
\begin{align}\label{glln}
Y_t^n(H,X,1)=\frac{1}{n}\sum_{i=1}^{\left\lfloor nt\right\rfloor} H(\sqrt{n} \Delta_i^n X) \quad \text{and} \quad Y_t^n(H,X,0)=\sum_{i=1}^{\left\lfloor nt\right\rfloor} H(\Delta_i^n X)
\end{align}
for quite general functions $H$, but with a strong view on power variations, i.e.\ $H_p(x)=|x|^p$. For $0<p<2$, and under some mild additional assumptions, Jacod \cite{J2} shows
\begin{align}\label{cest}
Y_t^n(H_p,X,1) \toop m_p \int_0^t |\sigma_s|^p ds,
\end{align}
where $m_p$ is the $p$-th absolute moment of a standard normal distribution. It follows that $Y_t^n(H_p,X,0)$ explodes for this specific $H_p$. On the other hand, if $p>2$ we have
\begin{align}\label{jest}
Y_t^n(H_p,X,0)\toop \sum_{s\leq t} |\Delta X_s|^p,
\end{align}
where $\Delta X_s =\Delta X_s - \Delta X_{s-}$ stands for the jumps of $X$. Clearly, it is now $Y_t^n(H_p,X,1)$ which diverges. 

For the associated central limit theorems the assumptions need to be stronger. Precisely, one requires $0<p<1$ for $Y_t^n(H_p,X,1)$ and $p>3$ for $Y_t^n(H_p,X,0)$. The limiting processes are also (often) conditionally Gaussian, but of different form. For $p<1$ the conditional variance of the limit depends only on the continuous part of $X$, whereas in the case $p>3$ the conditional variance is more complicated and depends on both the jump and the continuous part of $X$. 

To accommodate these different behaviors into our setting, we will consider V-statistics $Y_t^n(H,X,l)$ of order $d$ which are determined by kernel functions of the form
\[
H(x_1,\dots, x_l,y_1, \dots, y_{d-l})=|x_1|^{p_1} \cdot \ldots \cdot |x_l|^{p_l} |y_1|^{q_1}\cdot \ldots \cdot |y_{d-l}|^{q_{d-l}} L(x_1,\dots, x_l,y_1, \dots, y_{d-l}), 
\]
where $L$ has to fulfill some boundedness conditions and needs to be sufficiently smooth. Further we assume $p_1,\dots, p_l <2$ and $q_1,\dots, q_{d-l} >2$. Clearly there are two special cases. If $l=0$ we need a generalization of \eqref{jest} to V-statistics of higher order. If $l=d$ the V-statistic is of similar form as the U-statistic $U(X,H)_t^n$ defined above. In particular, we have to extend the theory of U-statistics of continuous It\^o semimartingales in \cite{PSZ} to the case of discontinuous It\^o semimartingales. Finally, in the sophisticated situation of arbitrary $l$, we will combine the two special cases. The limiting processes in the central limit theorems will still be (in most cases) conditionally Gaussian, with the same structural differences as for the plain power variations. 

The paper is organized as follows. The short section \ref{prel} contains some basic definitions and notations. In section \ref{tjc} we start with the jump case and present a law of large numbers and a central limit theorem in the case $l=0$, but for slightly more general statistics than $Y_t^n(H,X,0)$. A statistical application regarding possible jump sizes is sketched as well. Section \ref{tmc}, on the other hand, is concerned with a law of large numbers and an associated central limit theorem for $Y_t^n(H,X,l)$ and arbitrary $l$. Here, we rely on the previously established results from section \ref{tjc} and on a uniform central limit theorem for U-statistics, which generalizes the results given in Podolskij et al. \cite{PSZ}. Finally, an appendix contains proofs of some technical results, alongside with a proof of the aforementioned uniform central limit theorem for U-statistics.

\section{Preliminaries}\label{prel}
Throughout the paper we assume that we observe a one-dimensional It\^o-semimartingale
\begin{align*}
X_t=X_0 +\int_{0}^t b_s ds + \int_0^t \sigma_s dW_s +(\delta \mathbbm{1}_{\left\{ |\delta| \leq 1\right\}})\ast (\mathfrak{p} - \mathfrak{q})_t +(\delta \mathbbm{1}_{\left\{ |\delta|>1 \right\}})\ast \mathfrak{p}_t, \quad t \in [0,T],
\end{align*}
which is defined on a filtered probability space $(\Omega, \mathcal{F}, (\mathcal{F}_t)_{t \geq 0}, \mathbb{P})$ that satisfies the usual assumptions. Obviously we have $T > 0$, and we require further that $W$ is a Brownian motion and $\mathfrak{p}$ is a Poisson random measure with compensator $\mathfrak{q}(dt,dz)=dt \otimes \lambda(dz)$ for some $\sigma$-finite measure $\lambda$. Unless strengthened, we work with mild assumptions on the coefficients and assume that $b$ is locally bounded, $\sigma$ is c\`adl\`ag and $\delta$ is predictable. Observations come in an equidistant way, i.e.\ we observe $X_0, X_{1/n},\dots,X_{\left\lfloor nT\right\rfloor/n}$, and eventually $n \to \infty$. 

Moreover we will use the following vector notation: If $\mathbf{p}=(p_1,\dots,p_d), \mathbf{x}=(x_1,\dots,x_d) \in \mathbb{R}^d$, then we let $|\mathbf{x}|^{\mathbf{p}}:=\prod_{k=1}^d |x_k|^{p_k}$. Define further $\mathbf{p} \leq \mathbf{x} \Longleftrightarrow p_i \leq x_i$ for all $1 \leq i \leq d$. If $t \in \mathbb{R}$ we let $\mathbf{x} \leq t \Longleftrightarrow x_i \leq t$ for all $1 \leq i \leq d$. By $\left\|\cdot\right\|$ we denote the maximum norm for vectors and the supremum norm for functions. Finally, we introduce the notation
\begin{align}\label{power}
\mathfrak{P}(l):=\left\{p(x_1,\dots,x_l)=\sum_{\mathbf{\alpha} \in A} |x_1|^{\alpha_1}\cdots |x_l|^{\alpha_l} \Big| A \subset \mathbb{R}_{+}^l \text{ finite}  \right\}.
\end{align}
We will assume in the entire paper that $K$ is some generic constant which may change from line to line.

\begin{section}{The jump dominated case}\label{tjc}
In this section we analyze the asymptotic behavior of the V-statistic $V(H,X,l)_t^n$ defined by
\begin{align}\label{defvj}
V(H,X,l)_t^n:= \frac{1}{n^{d-l}} \sum_{\mathbf{i}\in \mathcal{B}_t^n(d)}  H(\Delta_{\mathbf{i}}^n X)=\frac{1}{n^{d-l}} Y_t^n(H,X,0)
\end{align} 
for different types of continuous functions $H:\mathbb{R}^d \to \mathbb{R}$, where the jump part of
$X$ will dominate the limit.  
As a toy example in the case $d=2$ serve the two kernel functions
\[
H_1(x_1,x_2)=|x_1|^p \quad \text{and} \quad  H_2(x_1,x_2)=|x_1 x_2|^p
\]
for some $p >2$. Already for these basic functions it is easy to see why there should be different rates of convergence, i.e.\ different $l$, in the law of large numbers. Consider
\[
V(H_1,X,l)_t^n = \frac{\left\lfloor nt\right\rfloor}{n^{2-l}} \sum_{i=1}^{\left\lfloor nt\right\rfloor}  |\Delta_{i}^n X|^p \quad \text{and} \quad V(H_2,X,l)_t^n = \frac{1}{n^{2-l}} \Bigg(\sum_{i=1}^{\left\lfloor nt\right\rfloor}  |\Delta_{i}^n X|^p\Bigg)^2.
\]
In order to get convergence in probability to some non-trivial limit we know from the 1-dimensional theory (see \eqref{jest}) that we have to choose $l=1$ for $H_1$ and $l=2$ for $H_2$.

In the following two subsections we will provide a law of large numbers and an associated central limit theorem for the statistics defined in \eqref{defvj}.  

\begin{subsection}{Law of large numbers}
For the law of large numbers we do not need to impose any additional assumptions on the process $X$. We only need to require that the kernel function $H$ fulfills \eqref{alln}, which is the same condition as given in \cite{J2} for $d=1$. 
\begin{theorem}\label{lln}
Let $H: \mathbb{R}^d \to \mathbb{R}$ be continuous and $1\leq l \leq d$ such that
\begin{align} \label{alln}
\lim_{(x_1,\dots, x_l) \to \mathbf{0}} \frac{H(x_1,\dots,x_d)}{|x_1|^2 \cdot \ldots \cdot|x_l|^2} =0.
\end{align}
Then, for fixed $t>0$,
\[
V(H,X,l)_t^n \toop V(H,X,l)_t:=t^{d-l} \sum_{\mathbf{s} \in (0,t]^l} H(\Delta X_{\mathbf{s}},\mathbf{0}).
\]
\end{theorem}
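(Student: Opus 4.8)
The plan is to follow the classical strategy for power-type functionals of jump processes (as in Jacod \cite{J2}): localize, approximate by a finite-activity process, and then analyze the multi-index sum by isolating which increments carry a jump. By a standard localization argument I may assume that $b$, $\sigma$ and $\delta$ are bounded, so that $X$ has bounded jumps and the continuous part has bounded characteristics; this does not affect convergence in probability for fixed $t$.

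Fix $m\in\N$ and split the jumps of $X$ at the level $1/m$: write $X=X(m)+R(m)$, where $X(m)=X_0+\int_0^\cdot b_s^{(m)}\,ds+\int_0^\cdot\sigma_s\,dW_s+\sum_{s\le\cdot}\Delta X_s\mathbbm{1}_{\{|\Delta X_s|>1/m\}}$ retains the continuous part and the finitely many large jumps (occurring at times $\tau_1<\dots<\tau_{N_m}$ in $(0,t]$), while $R(m)$ gathers the compensated small jumps. The theorem will follow once I establish (i) the conclusion with $X$ replaced by the finite-activity process $X(m)$, for each fixed $m$, and (ii) that the two approximation errors vanish as $m\to\infty$, the one on the statistic side uniformly in $n$.

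For fixed $m$ and $n$ large, each $\tau_r$ lies in a distinct interval $((i_r-1)/n,i_r/n]$, while the remaining, jump-free increments of $X(m)$ are increments of the continuous part and satisfy $\max_{i\ne i_r}|\Delta^n_iX(m)|\to0$ a.s. I would expand $V(H,X(m),l)^n_t$ over $\mathcal{B}^n_t(d)$ and classify each tuple $\mathbf{i}$ by the set of slots whose increment contains a large jump. The leading term corresponds to tuples whose first $l$ slots each contain a large jump and whose remaining $d-l$ slots are jump-free: by continuity of $H$ and the uniform smallness of the jump-free increments, and because $n^{-(d-l)}$ times the number of choices of the $d-l$ free indices tends to $t^{d-l}$, this term converges to $t^{d-l}\sum_{\mathbf{s}}H(\Delta X_{\mathbf{s}},\mathbf{0})$, the sum running over $l$-tuples of large-jump times. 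All other configurations are negligible. If not all of the first $l$ slots carry a large jump, the corresponding arguments are asymptotically small and \eqref{alln} yields a bound $|H(x)|\le\phi(x)\,|x_1|^2\cdots|x_l|^2$ with $\phi\to0$; together with $\sum_i|\Delta^n_iX(m)|^2=O_P(1)$ and the normalization $n^{-(d-l)}$ this forces such terms to $0$. Configurations with more than $l$ large-jump slots carry, after counting the free indices, a factor $n^{l-j}$ with $j>l$, hence vanish since $H$ is bounded on the relevant compacts.

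Letting $m\to\infty$ completes the argument: on the limit side $\sum_{\mathbf{s}}H(\Delta X_{\mathbf{s}},\mathbf{0})$ over large-jump $l$-tuples increases to the full sum over $(0,t]^l$, which is finite because \eqref{alln} bounds $H(\Delta X_{\mathbf{s}},\mathbf{0})$ by a multiple of $\prod_{k}|\Delta X_{s_k}|^2$ and $\sum_{s\le t}|\Delta X_s|^2<\infty$. I expect the main obstacle to be step (ii): controlling $V(H,X,l)^n_t-V(H,X(m),l)^n_t$ uniformly in $n$. Here one must compare $H$ evaluated at the full increments with $H$ at the $X(m)$-increments, and show the difference is small uniformly in $n$ as $m\to\infty$; this requires exploiting the decay in \eqref{alln} jointly with maximal and moment estimates for power sums of the residual small-jump increments $\Delta^n_iR(m)$, so that the error is $O_P(1)$ in $n$ and $o(1)$ in $m$.
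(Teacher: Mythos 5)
Your strategy (localize, split $X=X(m)+R(m)$ at jump level $1/m$, prove the result for the finite-activity process $X(m)$, then let $m\to\infty$) is a genuinely different route from the paper's, but it has a genuine gap, and it is exactly the one you flag yourself: the uniform-in-$n$ control of $V(H,X,l)_t^n-V(H,X(m),l)_t^n$ is never carried out, only described. This is the heart of the matter in your approach, not a routine verification: one must telescope $H(\Delta_{\mathbf{i}}^n X)-H(\Delta_{\mathbf{i}}^n X(m))$ through its $d$ arguments, bound terms like $\bigl||\Delta_{i}^n X|^2-|\Delta_{i}^n X(m)|^2\bigr|$ via Cauchy--Schwarz against $\bigl(\sum_i|\Delta_i^n R(m)|^2\bigr)^{1/2}$, invoke a bound of the type $\mathbb{E}\bigl[\sum_i|\Delta_i^n R(m)|^2\bigr]\leq t\int_{\{|\delta|\leq 1/m\}}|\delta|^2\,\lambda(dz)$ (which already presupposes integrability/localization structure that the theorem does not impose -- the paper stresses that the LLN needs no extra assumptions on $X$), and control $L(\Delta_{\mathbf{i}}^n X)-L(\Delta_{\mathbf{i}}^n X(m))$ by a modulus of continuity evaluated at $\max_i|\Delta_i^n R(m)|$, a quantity which for fixed $m$ does \emph{not} tend to $0$ as $n\to\infty$ but is only of order $1/m$ in the limit. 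Until this is written out, the proof is incomplete at its central step. A second, smaller leak: your negligibility argument for configurations where only some of the first $l$ slots carry a big jump invokes ``$\phi\to0$'' from \eqref{alln}; but \eqref{alln} concerns the joint limit $(x_1,\dots,x_l)\to\mathbf{0}$, whereas in such a configuration only the jump-free coordinates are small. What you actually need is that $L=H/|x_1\cdots x_l|^2$ (extended by $0$) is continuous and vanishes as soon as \emph{one} of the first $l$ coordinates vanishes -- the reading the paper adopts in its Remark following the theorem -- and this should be stated, since under the literal joint-limit reading the step does not follow.

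For comparison: the paper never decomposes the jumps at all, which is precisely why it needs no localization and no moment estimates. It first shows that replacing the last $d-l$ arguments of $H$ by $\mathbf{0}$ costs nothing, using only the modulus of continuity of $L$, the convergence of the realized quadratic variation $[X,X]_t^n$, and the pathwise fact that increments containing no jump of size $>\epsilon$ are eventually smaller than $2\epsilon$; this reduces the theorem to the case $l=d$. For $l=d$ it telescopes $V(H,X,d)_t^n-V(H,X,d)_t$ coordinate by coordinate into terms controlled by $\sup_{\left\|\mathbf{x}\right\|\leq A_t}|g_k^n(\mathbf{x})|$, and handles each supremum by a finite-grid approximation: at grid points the convergence is the known a.s.\ one-dimensional result (Remark 3.3.3 in \cite{JP}), and off the grid the error is again a modulus-of-continuity times quadratic-variation bound. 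So the step you identify as the ``main obstacle'' simply never arises in the paper's argument. If you want to keep your route, the cleanest repair is to import exactly those modulus-of-continuity/quadratic-variation estimates to control the small-jump error (and to state the strengthened reading of \eqref{alln}); otherwise the pathwise argument of the paper is shorter and requires strictly weaker hypotheses.
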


\begin{remark} Note that we can write $H$ in the form 
\[
H=|x_1 \cdot \ldots \cdot x_l|^2 L(x_1,\dots,x_d),
\]
where
\[
L(x_1,\dots,x_d)=\begin{cases}
  \frac{H(x_1,\dots,x_d)}{|x_1 \cdot \ldots \cdot x_l|^2},  & \text{if } x_1,\dots,x_l \neq 0,\\
  0, & \text{otherwise }.
\end{cases}
\]
By assumption \eqref{alln}, $L$ is continuous and consequently the limit $V(H,X,l)_t$ is well-defined, since the squared jumps of a semimartingale are absolutely summable. \end{remark}

\begin{remark} Condition \eqref{alln} is stated in a somewhat asymmetric way because it only concerns the first $l$ arguments of $H$. Generally one should rearrange the arguments of $H$ in a way such that \eqref{alln} is fulfilled for the largest possible $l$. In particular, $H(x_1,\dots,x_l,\mathbf{0})$ is not identically 0 then (unless $H \equiv 0$), which will lead to non-trivial limits. 
\end{remark}

\begin{proof}
Let $t>0$ be fixed. The proof will be divided into two parts. In the first one we will show that
\begin{align*}
\xi_t^n:=\frac{1}{n^{d-l}} \sum_{\mathbf{i}\in \mathcal{B}_t^n(d)}  (H(\Delta_{\mathbf{i}}^n X)- H(\Delta_{i_1}^n X,\dots,\Delta_{i_l}^n X, \mathbf{0}))  \toop 0.
\end{align*}
Then we are left with proving the theorem in the case $l=d$, which will be done in the second part.

Since the paths of $X$ are c\`adl\`ag and therefore bounded on compacts by a constant $A_t (\omega)=\sup_{0\leq s \leq t} |X_s(\omega)|$, we have the estimate
\begin{align*}
|\xi_t^n| \leq \frac{1}{n^{d-l}} \sum_{\mathbf{i}\in \mathcal{B}_t^n(d)} |\Delta_{i_1}^n X \cdot \ldots \cdot \Delta_{i_l}^n X|^2 \delta_{L,A_t} (\max(|\Delta_{i_{l+1}}^n X|,\dots,|\Delta_{i_d}^n X|))\\
= \Bigg(\sum_{i=1}^{\left\lfloor nt \right\rfloor} |\Delta_i^n X|^2 \Bigg)^{l} \frac{1}{n^{d-l}}\sum_{i_{l+1},\dots,i_d=1}^{\left\lfloor nt \right\rfloor}\delta_{L,A_t} (\max(|\Delta_{i_{l+1}}^n X|,\dots,|\Delta_{i_d}^n X|)),
\end{align*}
where 
\[
\delta_{L,A_t}(\epsilon):=\sup \left\{|L(\mathbf{x})-L(\mathbf{y})| \Big| \mathbf{x},\mathbf{y} \in [-2A_t,2A_t]^d, \left\|\mathbf{x}-\mathbf{y}\right\| < \epsilon\right\}, \quad \epsilon > 0
\] 
denotes the modulus of continuity of $L$. 

We will now use the elementary property of the c\`adl\`ag process $X$, that for every $\epsilon >0$, there exists $N\in \mathbb{N}$ such that $|\Delta_i^n X| <2\epsilon$ for all $n \geq N$, if $X$ does not have a jump of size bigger than $\epsilon$ on $\big( \frac{i-1}{n}, \frac{i}{n} \big]$. Since the number of those jumps is finite, we obtain for sufficiently large $n$ the estimate
\[
\frac{1}{n^{d-l}}\sum_{i_{l+1},\dots,i_d=1}^{\left\lfloor nt \right\rfloor}\delta_{L,A_t} (\max(|\Delta_{i_{l+1}}^n X|,\dots,|\Delta_{i_d}^n X|)) \leq t^{d-l}\delta_{L,A_t} (2\epsilon) + \frac{K(\epsilon)}{n}.
\] 
Using the continuity of $L$, the left hand side becomes arbitrarily small, if we first choose $\epsilon$ small and then $n$ large. From \cite{JS} we know that
\begin{align} \label{qv}
[X,X]_t^n:=\sum_{i=1}^{\left\lfloor nt \right\rfloor} |\Delta_i^n X|^2 \toop [X,X]_t =\int_0^t \sigma_s^2 \ ds + \sum_{0<s\leq t} |\Delta X_s|^2,
\end{align}
and thus we obtain $\xi_t^n \toop 0$.

For the second part of the proof, i.e.\ the convergence $V(H,X,l)_t^n \toop V(H,X,l)_t$ in the case $l=d$, we define the functions $g_k^n :\mathbb{R}^{d-1} \to \mathbb{R}$ by
\begin{align*}
g_k^n (\mathbf{x})=\sum_{i=1}^{\left\lfloor nt \right\rfloor} |\Delta_i^n X|^2 &L(x_1,\dots,x_{k-1},\Delta_i^n X,x_{k},\dots,x_{d-1}) \\
&- \sum_{s \leq t} |\Delta X_s|^2 L(x_1,\dots,x_{k-1},\Delta X_s,x_{k},\dots,x_{d-1})
\end{align*}
and deduce
\begin{align*}
 &|V(H,X,d)_t^n-V(H,X,d)_t| =\Big|\sum_{\mathbf{i}\in \mathcal{B}_t^n(d)}  H(\Delta_{\mathbf{i}}^n X)- \sum_{\mathbf{s} \in [0,t]^d} H(\Delta X_{\mathbf{s}})\Big| \\
 =& \Big|\sum_{k=1}^d \Big\{\sum_{\mathbf{i}\in \mathcal{B}_t^n(k)} \sum_{\mathbf{s} \in [0,t]^{d-k}}  H(\Delta_{\mathbf{i}}^n X,\Delta X_{\mathbf{s}})- \sum_{\mathbf{i}\in \mathcal{B}_t^n(k-1)}\sum_{\mathbf{s} \in [0,t]^{d-k+1}} H(\Delta_{\mathbf{i}}^n X,\Delta X_{\mathbf{s}}) \Big\} \Big|\\
\leq & \sum_{k=1}^d ([X,X]_t^n)^{k-1} [X,X]_t^{d-k} \sup_{\left\|\mathbf{x}\right\| \leq A_t} |g_k^n(\mathbf{x})|.
\end{align*}
By using \eqref{qv} again we see that it remains to show $\sup_{\left\|\mathbf{x}\right\| \leq A_t} |g_k^n(\mathbf{x})|\toop 0$ for $1\leq k \leq d$. In the following we replace the supremum by a maximum over a finite set and give sufficiently good estimates for the error that we make by doing so. 

For any $m \in \mathbb{N}$ define the (random) finite set $A_t^m $ by
\[
A_t^m := \Big\{ \frac{k}{m} \Big|  \ k \in \mathbb{Z}, \frac{|k|}{m} \leq A_t \Big\}.
\]  
Then we have
\[
\sup_{\left\|\mathbf{x}\right\| \leq A_t} |g_k^n(\mathbf{x})| \leq \max_{\mathbf{x} \in (A_t^m)^{d-1}} |g_k^n(\mathbf{x})| + \sup_{\stackrel{\left\|\mathbf{x}\right\|,\left\|\mathbf{y}\right\| \leq A_t}{\left\|\mathbf{x} - \mathbf{y} \right\|\leq 1/m}}|g_k^n(\mathbf{x})-g_k^n(\mathbf{y})|=:\zeta_{k,1}^n(m) + \zeta_{k,2}^n(m).
\]
Since the sets $A_t^m$ are finite, we immediately get $\zeta_{k,1}^n(m) \toas 0$ as $n \to \infty$ from Remark 3.3.3 in \cite{JP} for any fixed $m$. For the second summand $\zeta_{k,2}^n (m)$ observe that
\[
|\zeta_{k,2}^n (m)| \leq \Big(\sum_{i=1}^{\left\lfloor nt\right\rfloor} |\Delta_i^n X|^2 + \sum_{s \leq t} |\Delta X_s|^2\Big) \delta_{L,A_t}(m^{-1}),
\]   
which implies
\[
\lim_{m\to \infty} \limsup_{n \to \infty} \mathbb{P}(|\zeta_{k,2}^n (m)| >\epsilon) =0 \quad \text{for every} \quad \epsilon >0.
\]
The proof is complete.\end{proof}
\end{subsection}

\begin{subsection}{Central limit theorem}\label{cltjc}
In this section we will show a central limit theorem that is associated to the law of large numbers in Theorem \ref{lln}. The mode of convergence will be the so-called stable convergence. This notion was introduced by Renyi \cite{R} and generalized the concept of weak convergence. We say that a sequence $(Z_n)_{n\in \mathbb{N}}$ of random variables defined on a probability space $(\Omega,\mathcal{F},\mathbb{P})$ with values in a Polish space $(E,\mathcal{E})$ converges stably in law to a random variable $Z$, that is defined on an extension $(\tilde \Omega,\tilde{\mathcal{F}},\tilde{\mathbb{P}})$ of $(\Omega,\mathcal{F},\mathbb{P})$  and takes also values in $(E,\mathcal{E})$, if and only if
\[
\mathbb{E}(f(Z_n)Y) \to \tilde{\mathbb{E}}(f(Z)Y) \quad \text{as} \quad n \to \infty
\]
for all bounded and continuous $f$ and any bounded, $\mathcal{F}$-measurable $Y$. We write $Z_n \stab Z$ for stable convergence of $Z_n$ to $Z$.  For a short summary of the properties of stable convergence we refer to \cite{PV}. The main property that we will use here is that if we have two sequences $(Y_n)_{n \in \mathbb{N}} ,(Z_n)_{n \in \mathbb{N}}$ of real-valued random variables and real-valued random variables $Y,Z$ with $Y_n \toop Y$ and $Z_n \stab Z$, then the joint stable convergence $(Z_n, Y_n) \stab (Z,Y)$ can be concluded.

In contrast to the law of large numbers, we need to impose a mild boundedness assumption on the jumps of the process $X$. We assume that $|\delta(\omega, t, z)|\wedge 1 \leq \Gamma_n(z)$ for all $t \leq \tau_n(\omega)$, where $\tau_n$ is an increasing sequence of stopping times going to infinity. The functions $\Gamma_n$ are assumed to fulfill
\[
\int \Gamma_n^2 \lambda(dz) < \infty.
\]
Since the main result of this section, which is  Theorem \ref{stabvj}, is stable under stopping, we may as well  
assume by a standard localization argument (see \cite[section 4.4.1]{JP}) that all locally bounded processes are in fact bounded, i.e.\
\begin{align*}
|b_t|\leq A, \quad |\sigma_t| \leq A, \quad |X_t| \leq A,\quad |\delta(t,z)| \leq \Gamma(z) \leq A
\end{align*}
holds uniformly in $(\omega,t)$ for some constant $A$ and a function $\Gamma$ with 
\[
\int \Gamma(z)^2 \lambda(dz) \leq A.
\]

A common technique for proving central limit theorems for discontinuous semimartingales is to decompose the process $X$ for fixed $m \in \mathbb{N}$ into the sum of two processes $X(m)$ and $X'(m)$, where the part $X'(m)$ basically describes the jumps of $X$, which are of size bigger than $1/m$ and of whom there are only finitely many. Eventually one lets $m$ go to infinity. 
 
So here we define $D_m = \left\{z: \Gamma(z) >1/m \right\}$ and $(S(m,j))_{j \geq 1}$ to be the successive jump times of the Poisson process $\mathbbm{1}_{\left\{ D_m \backslash D_{m-1}\right\} } \ast \mathfrak{p}$. Let $(S_q)_{q\geq 1}$ be a reordering of $(S(m,j))$, and $$\mathcal{P}_m=\left\{p : S_p = S(k,j)\text{ for } j\geq 1, k\leq m \right\}, \quad \mathcal{P}_t^n(m)=\left\{p\in \mathcal{P}_m : S_p \leq \frac{\left\lfloor nt\right\rfloor}{n} \right\}, \quad \mathcal{P}_t(m)=\left\{p\in \mathcal{P}_m : S_p \leq t\right\}.$$ 
Further let
\begin{align*}
&R_{-}(n,p) = \sqrt{n} (X_{S_p-}-X_{\frac{i-1}{n}})\\
&R_{+}(n,p) = \sqrt{n} (X_{\frac{i}{n}}-X_{S_p}) \\
&R(n,p)=R_{-}(n,p)+R_{+}(n,p),
\end{align*}
if $\frac{i-1}{n}< S_p \leq \frac{i}{n}$.
Now we split $X$ into a sum of $X(m)$ and $X'(m)$, where $X'(m)$ is the "big jump part" and $X(m)$ is the remaining term, by setting
\begin{align*}
&b(m)_t =b_t - \int_{\left\{ D_m \cap \left\{ z: |\delta(t,z)| \leq 1\right\}\right\}} \delta(t,z) \lambda(dz) \\
&X(m)_t=\int_{0}^t b(m)_s ds + \int_0^t \sigma_s dW_s +(\delta \mathbbm{1}_{D_m^c})\ast (\mathfrak{p} - \mathfrak{q})_t \\
&X'(m)=X-X(m)=(\delta \mathbbm{1}_{D_m})\ast \mathfrak{p}.
\end{align*}
Further let $\Omega_n(m)$ denote the set of all $\omega$ such that the intervals $(\frac{i-1}{n},\frac{i}{n}]$  $(1\leq i \leq n)$ contain at most one jump of $X'(m)(\omega)$, and 
\[
|X(m)(\omega)_{t+s}  -X(m)(\omega)_t| \leq \frac{2}{m} \quad \text{for all} \quad t \in [0,T], s\in [0,n^{-1}].
\]
Clearly, $\mathbb{P}(\Omega_n(m)) \to 1$, as $n \to \infty$.

Before we state the main result of this section we begin with some important lemmas. The first one gives useful estimates for the size of the increments of the process $X(m)$. For a proof see \cite[$(2.1.44)$ and $(5.1.24)$]{JP}.  
\begin{lemma}\label{lem1}
For any $p \geq 1$ we have
\[
\mathbb{E}(|X(m)_{t+s}-X(m)_t|^p| \mathcal{F}_{t}) \leq K(s^{(p/2) \wedge 1} +m^p s^p)
\]
for all $t\geq 0, s\in[0,1]$. 
\end{lemma}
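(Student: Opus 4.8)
The plan is to exploit the additive structure of $X(m)$, writing the increment $X(m)_{t+s}-X(m)_t$ as the sum of a drift term, a continuous martingale term, and a compensated small-jump martingale term, and to bound the $p$-th conditional moment of each separately. By the convexity estimate $|a+b+c|^p\le 3^{p-1}(|a|^p+|b|^p+|c|^p)$ it suffices to control
\[
D_s=\int_t^{t+s}b(m)_u\,du,\quad C_s=\int_t^{t+s}\sigma_u\,dW_u,\quad M_s=(\delta\mathbbm{1}_{D_m^c})\ast(\mathfrak{p}-\mathfrak{q})_{t+s}-(\delta\mathbbm{1}_{D_m^c})\ast(\mathfrak{p}-\mathfrak{q})_{t}
\]
one at a time. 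Moreover, I would first prove the bound for $p\ge2$ and then deduce the range $1\le p<2$ from the case $p=2$ by conditional Jensen's inequality, since $x\mapsto x^{p/2}$ is concave there.

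First I would handle the drift. Using $|b_u|\le A$ together with the fact that on $D_m$ one has $\Gamma>1/m$, hence $\Gamma\wedge1\le\Gamma\le m\Gamma^2$, the compensator correction satisfies $\int_{D_m}(\Gamma\wedge1)\,\lambda(dz)\le m\int\Gamma^2\,\lambda(dz)\le mA$, so that $|b(m)_u|\le Km$ uniformly in $(\omega,u)$. This gives $|D_s|\le Kms$ pathwise and therefore $\mathbb{E}(|D_s|^p\mid\mathcal{F}_t)\le Km^ps^p$, which produces the second term in the claimed bound. For the continuous martingale I would apply the (conditional) Burkholder--Davis--Gundy inequality together with $|\sigma_u|\le A$ to get
\[
\mathbb{E}(|C_s|^p\mid\mathcal{F}_t)\le K\,\mathbb{E}\Big(\Big(\int_t^{t+s}\sigma_u^2\,du\Big)^{p/2}\,\Big|\,\mathcal{F}_t\Big)\le K(A^2s)^{p/2}\le Ks^{(p/2)\wedge1},
\]
where the last step uses $s\le1$.

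The hard part will be the compensated small-jump martingale $M_s$. Here I would invoke the Kunita/jump-type Burkholder inequality (precisely what is quoted from \cite{JP}), which for $p\ge2$ bounds $\mathbb{E}(|M_s|^p\mid\mathcal{F}_t)$ by a constant times
\[
\mathbb{E}\Big(\Big(\int_t^{t+s}\!\!\int_{D_m^c}\delta^2\,\lambda(dz)\,du\Big)^{p/2}\,\Big|\,\mathcal{F}_t\Big)+\mathbb{E}\Big(\int_t^{t+s}\!\!\int_{D_m^c}|\delta|^p\,\lambda(dz)\,du\,\Big|\,\mathcal{F}_t\Big).
\]
The subtlety is to use the two available scales correctly: the $L^2$-integrability $\int\Gamma^2\,\lambda(dz)\le A$ controls the first term by $K(As)^{p/2}\le Ks$, while the smallness $|\delta|\le\Gamma\le1/m$ on $D_m^c$ yields $|\delta|^p\le m^{2-p}\delta^2$ for $p\ge2$, so that the second term is at most $Km^{2-p}s\le Ks$. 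Hence $\mathbb{E}(|M_s|^p\mid\mathcal{F}_t)\le Ks\le Ks^{(p/2)\wedge1}$ for $p\ge2$, and combining the three estimates proves the assertion in that range. Finally, for $1\le p<2$ I would apply conditional Jensen, $\mathbb{E}(|X(m)_{t+s}-X(m)_t|^p\mid\mathcal{F}_t)\le(\mathbb{E}(|X(m)_{t+s}-X(m)_t|^2\mid\mathcal{F}_t))^{p/2}$, and then insert the case $p=2$ and use subadditivity of $x\mapsto x^{p/2}$ to obtain $K(s+m^2s^2)^{p/2}\le K(s^{p/2}+m^ps^p)=K(s^{(p/2)\wedge1}+m^ps^p)$, which completes the proof.
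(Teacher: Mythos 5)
Your proof is correct. Note that the paper itself contains no proof of Lemma \ref{lem1}: it simply cites the estimates $(2.1.44)$ and $(5.1.24)$ of \cite{JP}. What you have done is reconstruct the argument behind those estimates, and the reconstruction is sound. The decomposition of $X(m)_{t+s}-X(m)_t$ into drift, Brownian martingale and compensated small-jump martingale is the right one; the bound $|b(m)_u|\le Km$ (via $\Gamma\le m\Gamma^2$ on $D_m$ and $\int\Gamma^2\,\lambda(dz)\le A$) correctly identifies the $m^ps^p$ term as coming exclusively from the drift; the conditional Burkholder--Davis--Gundy inequality disposes of the Brownian part; and the two-scale treatment of the jump part --- $\int_{D_m^c}\delta^2\,\lambda(dz)\le A$ for the quadratic characteristic, and $|\delta|^p\le m^{2-p}\delta^2\le\delta^2$ on $D_m^c$ for the $p$-th order term in the Kunita-type inequality --- is precisely where the restriction to jumps of size at most $1/m$ pays off. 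The final reduction of $1\le p<2$ to $p=2$ by conditional Jensen and subadditivity of $x\mapsto x^{p/2}$ is also correct, and it is worth observing that your constants depend only on $A$ and $p$, not on $m$, $s$ or $t$, as required. Since this is essentially the same route taken in the cited reference, your proof does not diverge from the intended argument; what it buys is a self-contained verification of a statement the paper treats as a black box, which is useful given that the lemma (through \eqref{ord1}, \eqref{ord2} and Lemma \ref{lem2}) underpins the central limit theory of section \ref{tjc}.
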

As a simple application of the lemma we obtain for $p\geq 2$ and $\mathbf{i} \in \mathcal{B}_t^n(d)$ with $ i_1 <\dots < i_d$
\begin{align*}
& \mathbf{E} \big[ |\Delta_{i_1}^n X(m)|^p \cdot \ldots \cdot |\Delta_{i_d}^n X(m)|^p  \big] =\mathbf{E}\Big[\Delta_{i_1}^n X(m)|^p \cdot \ldots \cdot |\Delta_{i_{d-1}}^n X(m)|^p \mathbf{E} \big[ |\Delta_{i_d}^n X(m)|^p  \big| \mathcal{F}_{\frac{i_d-1}{n}} \big] \Big] \\
\leq & K\Big( \frac{1}{n}  +\frac{m^p}{n^p}\Big)\mathbf{E} \big[ |\Delta_{i_1}^n X(m)|^p \cdot \ldots \cdot |\Delta_{i_{d-1}}^n X(m)|^p  \big] \leq \dots \leq \frac{K(n,m)}{n^d}
\end{align*}
for some positive sequence $K(n,m)$ which satisfies $\limsup_{n\to \infty} K(n,m) \leq K$ for any fixed $m$. Consequently, for general $\mathbf{i} \in \mathcal{B}_t^n(d)$, we have
\[
\mathbf{E} \big[ |\Delta_{i_1}^n X(m)|^p \cdot \ldots \cdot |\Delta_{i_d}^n X(m)|^p  \big] \leq K(n,m) n^{-\#\left\{i_1,\dots,i_d \right\}}.
\]
Since the number of elements $\mathbf{i}=(i_1,\dots,i_d) \in \mathcal{B}_t^n(d)$ with $\#\left\{i_1,\dots,i_d \right\}=k$ is of order $n^k$, we obtain the useful formula
\begin{align}\label{ord1}
\mathbb{E}\Big[\sum_{\mathbf{i} \in \mathcal{B}_t^n(d)} |\Delta_{i_1}^n X(m)|^p \cdot \ldots \cdot |\Delta_{i_d}^n X(m)|^p \Big] \leq K(n,m),
\end{align}
and similarly 
\begin{align}\label{ord2}
\frac{1}{\sqrt{n}}\mathbb{E}\Big[\sum_{\mathbf{i} \in \mathcal{B}_t^n(d)} |\Delta_{i_1}^n X(m)|^p \cdot \ldots \cdot |\Delta_{i_{d-1}}^n X(m)|^p |\Delta_{i_d}^n X(m)| \Big] \leq K(n,m).
\end{align}

The next lemma again gives some estimate for the process $X(m)$ and is central for the proof of Theorem~\ref{stabvj}.  
\begin{lemma}\label{lem2}
Let $C>0$ be a constant. Assume further that $f:\mathbb{R} \times [-C,C]^{d-1} \to \mathbb{R}$ is defined by $f(\mathbf{x})=|x_1|^p g(\mathbf{x})$, where $p>3$ and $g \in \mathcal{C} (\mathbb{R} \times [-C,C]^{d-1})$ is twice continuously differentiable in the first argument. Then we have
\[
\mathbb{E}\Big(\mathbbm{1}_{\Omega_n(m)}\sqrt{n}\Big|\sum_{ i=1 }^{\left\lfloor nt\right\rfloor} \Big(f(\Delta_i^n X(m),x_2,\dots,x_d) - \sum_{\frac{i-1}{n}<s\leq \frac{i}{n}} f(\Delta X(m)_s,x_2,\dots,x_d)\Big)\Big|\Big)  \leq \beta_m(t)
\]
for some sequence $(\beta_m(t))$ with $\beta_m(t) \to 0$ as $m \to \infty$, uniformly in $x_2,\dots,x_d$.
\end{lemma}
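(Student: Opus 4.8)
The plan is to localize everything to $\Omega_n(m)$, where every increment satisfies $|\Delta_i^n X(m)|\le 2/m$, and then to expand each summand by It\^o's formula and estimate the resulting pieces with Lemma~\ref{lem1}. Throughout I fix and suppress the bounded arguments $x_2,\dots,x_d$ and write $f(x_1)=|x_1|^p g(x_1)$; since $g$ is $C^2$ and the last arguments range over a compact set, on $[-2/m,2/m]$ one has $|f(x_1)|\le K|x_1|^p$, $|f'(x_1)|\le K|x_1|^{p-1}$, $|f''(x_1)|\le K|x_1|^{p-2}$ and (because $p>3$) $|f'''(x_1)|\le K|x_1|^{p-3}$, uniformly in $x_2,\dots,x_d$. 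The single most useful device is the interpolation $|x_1|^{p-1}\le (2/m)^{p-3}|x_1|^2$ (and likewise $|x_1|^{p-2}\le(2/m)^{p-3}|x_1|$), valid since $p>3$ and $|x_1|\le 2/m$: it trades the large exponent for a vanishing factor $(2/m)^{p-3}$ while leaving a low power of $|x_1|$ to which the sharp moment bounds of Lemma~\ref{lem1} apply.

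For $s\in I_i:=(\tfrac{i-1}{n},\tfrac{i}{n}]$ write $Y_s:=X(m)_s-X(m)_{(i-1)/n}$, so that $f(\Delta_i^n X(m))=f(Y_{i/n})$ while $f(Y_{(i-1)/n})=0$. It\^o's formula applied to $s\mapsto f(Y_s)$ on $I_i$, after splitting $dX(m)$ into its drift, continuous-martingale and compensated-jump parts and using that the jump contribution $f'(Y_{s-})\Delta X(m)_s$ of the stochastic integral cancels the corresponding term in the It\^o jump correction, yields
\[
f(\Delta_i^n X(m))=\int_{I_i}f'(Y_{s-})\sigma_s\,dW_s+\int_{I_i}f'(Y_{s-})b(m)_s\,ds+\tfrac12\int_{I_i}f''(Y_{s-})\sigma_s^2\,ds-\int_{I_i}\!\int_{D_m^c}f'(Y_{s-})\delta\,\lambda(dz)\,ds+\sum_{\frac{i-1}{n}<s\le\frac{i}{n}}\bigl(f(Y_s)-f(Y_{s-})\bigr).
\]
Subtracting $\sum_{s\in I_i}f(\Delta X(m)_s)$ and setting $h(a,b):=f(a+b)-f(a)-f(b)$, the summand splits into a continuous-martingale integral, a drift and a second-order integral, the compensator $-\int_{I_i}\int_{D_m^c}f'(Y_{s-})\delta\,\lambda(dz)\,ds$, and the jump-nonlinearity $\sum_{s\in I_i}h(Y_{s-},\Delta X(m)_s)$. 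I would bound $\sqrt n$ times the sum over $i$ of each group separately.

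The continuous-type groups are the more routine. For the leading continuous-martingale term I would use orthogonality of the increments and It\^o's isometry, $\E\bigl|\sum_i\int_{I_i}f'(Y_{s-})\sigma_s\,dW_s\bigr|^2=\E\int_0^{\lfloor nt\rfloor/n}f'(Y_{s-})^2\sigma_s^2\,ds$, bound $f'(Y_{s-})^2\le K(2/m)^{2(p-3)}|Y_{s-}|^4$ by the interpolation, and invoke Lemma~\ref{lem1} to get $\E|Y_{s-}|^4\le K/n$ for $s\in I_i$; this produces a bound of order $(1/m)^{p-3}\sqrt t$ after multiplication by $\sqrt n$, which vanishes as $m\to\infty$. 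The drift and the $f''$ integral are treated in $L^1$ in the same spirit (using $\E|Y_{s-}|^2\le K/n$) and are of the same or smaller order and vanish as $n\to\infty$; here $p>3$ is exactly what makes the exponents close.

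The main obstacle is the pair of jump terms, because under possibly infinite activity one only controls $\int\Gamma^2\,\lambda(dz)\le A$, whereas $\int|\delta|\,\lambda(dz)$ may diverge, so first-order-in-$\delta$ contributions cannot be estimated by absolute values and must be routed through compensation or squaring. I would merge the compensator $-\int f'(Y_{s-})\delta\,\lambda$ with the predictable compensator of the nonlinearity into $\int_{I_i}\int_{D_m^c}\{f(Y_{s-}+\delta)-f(Y_{s-})-f'(Y_{s-})\delta-f(\delta)\}\,\lambda(dz)\,ds$. The crucial structural fact is that this integrand vanishes identically at $Y_{s-}=0$ (its value is $f(\delta)-f(\delta)=0$); writing it as $\delta^2\int_0^1(1-u)\bigl[f''(Y_{s-}+u\delta)-f''(u\delta)\bigr]du$ and applying the mean value theorem to $f''$ gives the bound $K\bigl(|Y_{s-}|^{p-2}\delta^2+|Y_{s-}|\,|\delta|^{p-1}\bigr)$, in which $\int\delta^2\lambda\le A$ and $\int|\delta|^{p-1}\lambda\le (1/m)^{p-3}A<\infty$ (finite because $p-1>2$); combined with $\E|Y_{s-}|\le Kn^{-1/2}$ and $\E|Y_{s-}|^{p-2}\le Kn^{-((p-2)/2\wedge 1)}$ from Lemma~\ref{lem1}, this yields, after the $\sqrt n$ scaling, a contribution of order $(1/m)^{p-3}$ plus a piece that vanishes as $n\to\infty$. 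The remaining martingale part of the nonlinearity I would control in $L^2$ through its predictable quadratic variation $\int_{I_i}\int_{D_m^c}h(Y_{s-},\delta)^2\,\lambda(dz)\,ds$: using $|h(a,b)|\le K(|a|\,|b|^{p-1}+|a|^{p-1}|b|)$, squaring turns the borderline first-order-in-$\delta$ behaviour into a genuinely $\lambda$-integrable $|\delta|^2$, and together with the interpolation and $\E|Y_{s-}|^2\le K/n$ this again gives an $(1/m)^{p-2}$ bound. Collecting all contributions produces a sequence $\beta_m(t)\to 0$, uniformly in $x_2,\dots,x_d$, the dominant part being of order $(1/m)^{p-3}$ while the genuinely $m$-independent pieces vanish as $n\to\infty$. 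The delicate points to get right are the exact cancellation in the It\^o expansion, the vanishing-at-$Y_{s-}=0$ structure of the combined compensator, and the systematic use of the interpolation to route every jump factor either into an $L^2$ square or into an exponent strictly above $2$ that $\lambda$ integrates.
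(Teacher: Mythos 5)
Your overall strategy---It\^o's formula on each interval, followed by term-by-term estimates in which high powers of the small increments are traded for factors $m^{-(p-3)}$---is exactly the one behind the paper's proof, which simply delegates the fixed-parameter case to \cite[p.~132]{JP} and observes that the key inequalities \eqref{unin} and \eqref{unin2} hold uniformly in $x_2,\dots,x_d$. But your execution has a genuine gap at precisely the step the paper identifies as the crux. To bound the merged compensator you write it as $\delta^2\int_0^1(1-u)\bigl[f''(Y_{s-}+u\delta)-f''(u\delta)\bigr]du$ and then ``apply the mean value theorem to $f''$'', i.e.\ you invoke $|f'''(x_1)|\le K|x_1|^{p-3}$. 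Under the hypotheses of the lemma, $g$ is only \emph{twice} continuously differentiable in the first argument, so $f=|x_1|^p g$ is only $C^2$ there: $f'''$ involves $\partial_1^3 g$ and need not exist, and your earlier assertion that $|f'''|\le K|x_1|^{p-3}$ ``because $p>3$'' conflates the smoothness of $|x_1|^p$ with that of the product. The gap is not cosmetic: replacing the mean value theorem by the modulus of continuity of $f''$ only yields $\delta^2\,\omega_{f''}(|Y_{s-}|)$, which carries no rate in $|Y_{s-}|$, and $\sqrt n\,\mathbb{E}[\omega_{f''}(|Y_{s-}|)]$ can diverge (take $\omega(r)\sim 1/\log(1/r)$). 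The inequality you actually need---the paper's \eqref{unin2}, $|f(x+y)-f(x)-f'(x)y-f(y)|\le K\beta_m |x|y^2$ with $\beta_m\to0$---is true under the stated assumptions, but its proof must exploit the product structure of $f$ rather than a third derivative: suppressing $x_2,\dots,x_d$,
\begin{align*}
f(x+y)-f(x)-f'(x)y-f(y)={}&\bigl[\,|x+y|^p-|x|^p-p|x|^{p-1}\mathrm{sgn}(x)\,y-|y|^p\,\bigr]\,g(x+y)\\
&+|x|^p\bigl[g(x+y)-g(x)-g'(x)y\bigr]\\
&+p|x|^{p-1}\mathrm{sgn}(x)\,y\,\bigl[g(x+y)-g(x)\bigr]+|y|^p\bigl[g(x+y)-g(y)\bigr],
\end{align*}
where the mean value theorem is applied only to the pure power $|w|^p$ (which \emph{is} $C^3$ since $p>3$) in the first bracket, and Taylor expansions of order at most two are applied to $g$ in the remaining ones; for $|x|\le 3/m$, $|y|\le 1/m$ each bracket is then bounded by $K m^{-(p-3)}|x|y^2$, which recovers your claimed dominant rate and makes the compensator estimate legitimate.

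A second, smaller issue: $\Omega_n(m)$ is not adapted, so ``localizing to $\Omega_n(m)$'' does not permit you to insert the pathwise bound $|Y_{s-}|\le 2/m$ inside It\^o-isometry or quadratic-variation expectations, which is exactly what your interpolation step does in the continuous-martingale and jump-martingale estimates. For instance, in the predictable quadratic variation of your $h$-martingale the term $|Y_{s-}|^{2(p-1)}\delta^2$, estimated only through the moment bounds of Lemma~\ref{lem1}, contributes $O(t)$ after the $\sqrt n$-scaling, not $O(m^{-(p-2)})$; the smallness in $m$ is lost. The standard repair is to stop each interval at the first time the increment of $X(m)$ exceeds $2/m$ (the stopped left limits then satisfy the pathwise bound, and on $\Omega_n(m)$ the stopped and unstopped sums coincide); this should be stated explicitly. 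With that stopping argument and with the corrected derivation of \eqref{unin2}, your scheme of estimates does close and reproduces the paper's proof in full detail.
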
 
\begin{proof} The main idea is to apply It\^o formula to each of the summands and then estimate the expected value. For fixed $x_2,\dots, x_d$ this was done in \cite[p. 132]{JP}. We remark that their proof essentially relies on the following inequalities: For fixed $\mathbf{z} \in  [-C,C]^{d-1}$ and $|x|\leq 1/m$ ($m \in \mathbb{N}$) there exists $\beta_m(\mathbf{z})$ such that $\beta_m(\mathbf{z}) \to 0$ as $m\to \infty$ and
\begin{align}\label{unin}
|f(x,\mathbf{z})| \leq \beta_m(\mathbf{z}) |x|^3, \ |\partial_1 f(x,\mathbf{z})| \leq \beta_m(\mathbf{z}) |x|^2, \ |\partial_{11}^2 f(x,\mathbf{z})| \leq \beta_m(\mathbf{z}) |x|.
\end{align}     
Further, for $x, y \in \mathbb{R}$, define the functions 
\[
k(x,y,\mathbf{z})=f(x+y,\mathbf{z})-f(x,\mathbf{z})-f(y,\mathbf{z}), \quad g(x,y,\mathbf{z})=k(x,y,\mathbf{z})-\partial_1 f(x, \mathbf{z})y.
\]  
Following \cite{JP} we obtain for $|x|\leq 3/m$ and $|y|\leq 1/m$ that
\begin{align}\label{unin2}
|k(x,y,\mathbf{z})|\leq K \beta_m(\mathbf{z}) |x||y|, \ |g(x,y,\mathbf{z})|\leq K\beta_m(\mathbf{z}) |x| |y|^2.
\end{align}
Since $f$ is twice continuously differentiable in the first argument and $\mathbf{z}$ lies in a compact set, the estimates under \eqref{unin} and \eqref{unin2} hold uniformly in $\mathbf{z}$, i.e.\ we can assume that the sequence $\beta_m(\mathbf{z})$ does not depend on $\mathbf{z}$, and hence the proof in \cite{JP} in combination with the uniform estimates implies the claim. 
\end{proof}

At last we give a lemma that can be seen as a generalization of the fundamental theorem of calculus.

\begin{lemma}\label{lem3}
Consider a function $f \in \mathcal{C}^d(\mathbb{R}^d)$. Then we have
\begin{align*}
f(x)=f(0)+\sum_{k=1}^d \sum_{1 \leq i_1 < \dots < i_k \leq d} \int_0^{x_{i_1}} \cdots \int_0^{x_{i_k}} \partial_{i_k} \cdots \partial_{i_1} f(g_{i_1,\dots,i_k}(s_1,\dots,s_k)) ds_k \dots ds_1,
\end{align*} 
where $g_{i_1,\dots,i_k}:\mathbb{R}^k \to \mathbb{R}^d$ with
\[
(g_{i_1,\dots,i_k}(s_1,\dots,s_k))_j = \begin{cases}
  0,  & \text{if  } \ j \notin \left\{i_1,\dots,i_k \right\}\\
  s_l, & \text{if  } \ j=i_l.
\end{cases} 
\]
\end{lemma}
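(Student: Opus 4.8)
The plan is to argue by induction on the dimension $d$, applying the one-dimensional fundamental theorem of calculus one coordinate at a time. The base case $d=1$ is exactly $f(x_1)=f(0)+\int_0^{x_1}\partial_1 f(s_1)\,ds_1$, the classical fundamental theorem, which holds since $f\in\mathcal{C}^1(\mathbb{R})$.

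For the inductive step, assume the identity holds in dimension $d-1$ for every $\mathcal{C}^{d-1}$ function. Given $f\in\mathcal{C}^d(\mathbb{R}^d)$, first apply the one-dimensional fundamental theorem in the last coordinate:
\[
f(x_1,\dots,x_d)=f(x_1,\dots,x_{d-1},0)+\int_0^{x_d}\partial_d f(x_1,\dots,x_{d-1},s)\,ds.
\]
The first summand is a $\mathcal{C}^{d-1}$ function of $(x_1,\dots,x_{d-1})$, so the inductive hypothesis expands it into a sum over all subsets $\{i_1<\dots<i_k\}$ of $\{1,\dots,d-1\}$; these are precisely the subsets of $\{1,\dots,d\}$ not containing $d$, the empty set yielding the term $f(0)$. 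For the second summand, for each fixed $s$ the map $(x_1,\dots,x_{d-1})\mapsto\partial_d f(x_1,\dots,x_{d-1},s)$ is $\mathcal{C}^{d-1}$ (because $f\in\mathcal{C}^d$), so applying the inductive hypothesis to it and then integrating in $s$ produces a sum over all subsets $\{i_1<\dots<i_{k-1}\}$ of $\{1,\dots,d-1\}$ carrying an extra outer derivative $\partial_d$ and an extra outer integral $\int_0^{x_d}\cdots\,ds$. Adjoining the index $d$ to each such subset, these are exactly the subsets of $\{1,\dots,d\}$ containing $d$, and the two contributions together recover the full sum over all subsets of $\{1,\dots,d\}$.

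It remains only to check that, for a subset $S=\{i_1<\dots<i_k\}$ containing $d$ (so $i_k=d$), the term produced matches the statement. Writing the last-coordinate variable $s$ as $s_k$ placed in coordinate $i_k=d$, the extra outer derivative and integral occupy the correct outermost position, while the inner expansion of $\partial_d f$ supplies the factors $\partial_{i_{k-1}}\cdots\partial_{i_1}$ and the integrals $\int_0^{x_{i_1}}\cdots\int_0^{x_{i_{k-1}}}$ with $s_l$ placed in coordinate $i_l$. Since $f\in\mathcal{C}^d$ the mixed partials commute by Schwarz's theorem, so the derivative composition may be read as $\partial_{i_k}\cdots\partial_{i_1}$ without changing its value, and all coordinates outside $S$ are set to $0$, i.e.\ the argument is $g_{i_1,\dots,i_k}(s_1,\dots,s_k)$. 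Grouping the resulting $2^d$ terms by cardinality $k=|S|$ gives the stated identity.

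The argument is essentially bookkeeping, so I expect the main obstacle to be purely notational: one must track which integration variable $s_l$ corresponds to which coordinate $i_l$ and verify that the nesting order of the integrals and the order of the partial derivatives match the statement after each application of the hypothesis. All analytic requirements — continuity and integrability of the mixed partials $\partial_{i_k}\cdots\partial_{i_1}f$ on the compact intervals of integration, and the legitimacy of differentiating $f$ repeatedly — are guaranteed by $f\in\mathcal{C}^d(\mathbb{R}^d)$, and no Fubini-type interchange is needed since only the one-dimensional fundamental theorem is invoked at each step.
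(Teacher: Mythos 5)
Your proof is correct and follows essentially the same route as the paper: both arguments iterate the one-dimensional fundamental theorem of calculus one coordinate at a time, yours packaged as an induction on the dimension $d$, the paper's as the telescoping sum $f(x)=f(0)+\sum_{k=1}^d\bigl(f(x_1,\dots,x_k,0,\dots,0)-f(x_1,\dots,x_{k-1},0,\dots,0)\bigr)$ followed by recursive application to the integrands --- indeed, unrolling your induction step repeatedly on the boundary term $f(x_1,\dots,x_{d-1},0)$ reproduces that telescoping exactly. One minor slip: in the statement's notation $\int_0^{x_{i_1}}\cdots\int_0^{x_{i_k}}\,\cdot\,ds_k\cdots ds_1$ the integral over $s_k\in[0,x_{i_k}]=[0,x_d]$ is the \emph{innermost} one, whereas your construction places it outermost, so a Fubini interchange (trivial, since the integrand is continuous on a compact box) is in fact needed alongside the Schwarz commutation you already invoke; this is harmless, and the paper's own proof glosses over the identical point.
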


\begin{proof} First write
\[
f(x)=f(0)+\sum_{k=1}^d \big(f(x_1,\dots,x_k,0,\dots,0)-f(x_1,\dots,x_{k-1},0,\dots,0) \big),
\]
which yields
\[
f(x)=f(0)+\sum_{k=1}^d \int_{0}^{x_k} \partial_k f (x_1,\dots,x_{k-1},t,0,\dots,0) \ dt.
\]
Now we can apply the first step to the function $g_t(x_1,\dots,x_{k-1}):=\partial_k f(x_1,\dots,x_{k-1},t,0,\dots,0)$ in the integral and by doing this step iteratively we finally get the result.
\end{proof}  

We still need some definitions before we can state the central limit theorem (see for comparison \cite[p.126]{JP}). For the definition of the limiting processes we introduce a second probability space $(\Omega',\mathcal{F}', \mathbb{P}')$ equipped with sequences $(\psi_{k+})_{k\geq 1},(\psi_{k-})_{k\geq 1},$ and $(\kappa_k)_{k\geq 1}$ of random variables, where all variables are independent, $\psi_{k\pm} \sim \mathcal{N}(0,1)$, and $\kappa_k \sim U([0,1])$. We then define a very good filtered extension $(\tilde{\Omega}, \tilde{\mathcal{F}}, (\tilde{\mathcal{F}}_t)_{t \geq 0}, \tilde{\mathbb{P}})$ of the original space by  
\[
\tilde{\Omega}=\Omega \times \Omega', \quad \tilde{\mathcal{F}}=\mathcal{F} \otimes \mathcal{F}', \quad \tilde{\mathbb{P}} = \mathbb{P} \otimes \mathbb{P}'.
\]
Let now $(T_k)_{k \geq 1}$ be a weakly exhausting sequence of stopping times for the jumps of $X$. The filtration $\tilde{\mathcal{F}_t}$ is chosen in such a way that it is the smallest filtration containing $\mathcal{F}_t$ and that $\kappa_k$ and $\psi_{k\pm}$ are $\tilde{\mathcal{F}}_{T_k}$-measurable. Further let
\[
R_k =R_{k-}+R_{k+}, \quad \text{with} \quad R_{k-}=\sqrt{\kappa_k} \sigma_{T_k-}\psi_{k-},\quad R_{k+}=\sqrt{1-\kappa_k} \sigma_{T_k}\psi_{k+}.
\]
Also define the sets
\[
\mathcal{A}_l(d):= \left\{ L \in \mathcal{C}^{d+1}(\mathbb{R}^d) \Big| \ \lim_{\mathbf{y} \to 0} \partial_k L(\mathbf{x},\mathbf{y}) =0 \ \text{for all} \ \mathbf{x}\in \mathbb{R}^l,\ k=l+1,\dots,d  \right\}
\]
for $l=1,\dots,d$. 
\begin{remark}  The following properties hold:
\begin{itemize}
\item[(i)]  $\mathcal{A}_l(d) = \mathcal{C}^{d+1}(\mathbb{R}^d)$ for $l=d$.
\item[(ii)] If $f,g \in  \mathcal{A}_l(d)$, then also $f+g, fg \in \mathcal{A}_l(d)$, i.e.\ $\mathcal{A}_l(d)$ is an algebra.
\item[(iii)] Let $f \in \mathcal{C}^{d+1}(\mathbb{R})$ with $f'(0)=0$, then
\[
L(x_1,\dots, x_d)=f(x_1 \cdot \ldots \cdot x_d) \quad \text{and} \quad L(x_1,\dots,x_d)=f(x_1)+\dots+f(x_d)
\]
are elements of $\mathcal{A}_l(d)$ for all $1\leq l \leq d$.
\end{itemize}
\end{remark}
We obtain the following stable limit theorem.
\begin{theorem}\label{stabvj}
Let $1\leq l\leq d$ and $H:\mathbb{R}^d \to \mathbb{R}$ with $H(\mathbf{x})=|x_1|^{p_1} \cdot \ldots\cdot |x_l|^{p_l} L(\mathbf{x})$, where $p_1,\dots,p_l >3$ and $L \in \mathcal{A}_l(d)$.
For $t>0$ it holds that
\begin{align*}
&\sqrt{n} \Big(V(H,X,l)_t^n - V(H,X,l)_t  \Big) \\
&\quad \quad \quad \qquad \qquad \stab U(H,X,l)_t:= t^{d-l} \sum_{k_1,\dots,k_l:T_{k_1},\dots, T_{k_l} \leq t} \sum_{j=1}^l \partial_j H(\Delta X_{T_{k_1} },\dots , \Delta X_{T_{k_l}} ,\mathbf{0})R_{k_j}.
\end{align*}
The limit is $\mathcal{F}$-conditionally centered with variance
\begin{align*}
\mathbb{E}(U(H,X,l)_t^2| \mathcal{F}) = \frac{1}{2} t^{2(d-l)} \sum_{s\leq t} \Big( \sum_{k=1}^l \bar{V}_k (H,X,l,\Delta X_s)\Big)^2  (\sigma_{s -}^2 + \sigma_{s}^2), 
\end{align*}
where
\begin{align}\label{covj}
\bar{V}_k (H,X,l, y)=\sum_{s_1,\dots,s_{k-1},s_{k+1},\dots,s_l \leq t} \partial_k H(\Delta X_{s_1},\dots,\Delta X_{s_{k-1}},y,\Delta X_{s_{k+1}},\dots,\Delta X_{s_l},\mathbf{0}).
\end{align}
Furthermore, the $\mathcal{F}$-conditional law does not depend on the choice of the sequence $(T_k)_{k \in \mathbb{N}}$, and $U(H,X,l)_t$ is $\mathcal{F}$-conditionally Gaussian if $X$ and $\sigma$ do not have common jump times.
\end{theorem}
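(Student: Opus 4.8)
The plan is to follow the Jacod--Protter blueprint for high-frequency stable CLTs: localize, split off the finitely many ``big'' jumps, expand the kernel around the vector of jump sizes, and read the fluctuations off the rescaled increments bracketing each jump. By the localization argument invoked before Lemma \ref{lem1} I may assume all coefficients bounded, and I work on the sets $\Omega_n(m)$, on which every interval $(\frac{i-1}{n},\frac in]$ contains at most one jump of $X'(m)$, letting $m\to\infty$ at the very end. Throughout I abbreviate $\tilde H(\mathbf x):=H(\mathbf x,\mathbf 0)$ and $\mathbf k=(k_1,\dots,k_l)$, with the convention $\Delta X_{T_{\mathbf k}}=(\Delta X_{T_{k_1}},\dots,\Delta X_{T_{k_l}})$; note $\partial_j\tilde H(\mathbf x)=\partial_j H(\mathbf x,\mathbf 0)$ for $j\le l$.

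\emph{First reduction (elimination of the last $d-l$ arguments).} I would first show that
\[
\sqrt n\,\frac{1}{n^{d-l}}\sum_{\mathbf i\in\mathcal B_t^n(d)}\bigl(H(\Delta_{\mathbf i}^n X)-\tilde H(\Delta_{i_1}^n X,\dots,\Delta_{i_l}^n X)\bigr)\toop 0,
\]
and that replacing $\frac1{n^{d-l}}\sum_{i_{l+1},\dots,i_d}1$ by $t^{d-l}$ is likewise $\sqrt n$-negligible. The decisive structural input is that for $L\in\mathcal A_l(d)$ every first derivative in the last $d-l$ arguments vanishes at $\mathbf y=\mathbf 0$, so that $H(\mathbf x,\mathbf y)-H(\mathbf x,\mathbf 0)=|x_1|^{p_1}\cdots|x_l|^{p_l}\,\mathrm O(\|\mathbf y\|^2)$. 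Expanding the difference by Lemma \ref{lem3} and using $\sum_i|\Delta_i^nX|^{p}=\mathrm O(1)$ for $p>2$ together with the fact that $\sum_i|\Delta_i^nX|^2$ stays bounded (so $\frac1n\sum_i|\Delta_i^nX|^2=\mathrm O(1/n)$), the continuous coordinates gain an extra factor $n^{-1}$, which beats the prefactor $\sqrt n$. This reduces the assertion to the case $l=d$ up to the deterministic factor $t^{d-l}$.

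\emph{Core ($l=d$).} I split $\mathcal B_t^n(l)$ according to how many coordinates $i_j$ carry a big jump of $X'(m)$ (the $S_p$, $p\in\mathcal P_t^n(m)$). The dominant tuples are those in which every $i_j$ contains exactly one big jump, say at time $T_{k_j}$; on $\Omega_n(m)$ one then has $\Delta_{i_j}^n X=\Delta X_{T_{k_j}}+\Delta_{i_j}^n X(m)$ with $\sqrt n\,\Delta_{i_j}^nX(m)=R_-(n,p)+R_+(n,p)$ up to a drift of order $n^{-1/2}$. A first-order Taylor expansion of $\tilde H$ about the jump vector (which is bounded away from zero in every coordinate, so $\tilde H$ is smooth there) yields
\[
\sqrt n\bigl(\tilde H(\Delta_{\mathbf i}^nX)-\tilde H(\Delta X_{T_{\mathbf k}})\bigr)=\sum_{j=1}^l\partial_j\tilde H(\Delta X_{T_{\mathbf k}})\,\sqrt n\,\Delta_{i_j}^nX(m)+\mathrm O(n^{-1/2}),
\]
the remainder being uniformly controlled over the finitely many jump tuples. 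Invoking the stable convergence $(R_-(n,p),R_+(n,p))\stab(R_{k-},R_{k+})$ from \cite{JP}, together with the fact that stable limits combine with the $\mathcal F$-measurable coefficients $\partial_j\tilde H(\Delta X_{T_{\mathbf k}})$, each such tuple contributes $\sum_{j}\partial_j\tilde H(\Delta X_{T_{\mathbf k}})R_{k_j}$; summing over all $l$-tuples of big jumps and restoring $t^{d-l}$ produces $U(H,X,l)_t$.

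\emph{Negligibility and the limit $m\to\infty$.} The remaining tuples, those with at least one coordinate $i_j$ free of a big jump, must be shown asymptotically negligible, and this is the crux and the exact place where $p>3$ is indispensable: such a coordinate contributes a term governed by $\sqrt n\sum_i|\Delta_i^nX(m)|^{p_j}$, of order $n^{3/2-p_j/2}$, which explodes unless $p_j>3$. The clean device is Lemma \ref{lem2}, which replaces the $X(m)$-increments by the sums of jumps of $X(m)$ inside each interval at the cost of an error $\beta_m(t)\to0$; the residual small-jump contributions are then handled by the absolute summability of $\sum_s|\Delta X(m)_s|^{p_j}$ and vanish as $m\to\infty$. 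Finally I compute the $\mathcal F$-conditional moments: centeredness is immediate from the centered, independent $\psi_{k\pm}$, and expanding $U(H,X,l)_t^2$, keeping only matching jump indices (the $R_k$ being conditionally independent across jumps), collecting coefficients into $\bar{V}_k(H,X,l,\cdot)$, and using $\mathbb E(R_k^2\mid\mathcal F)=\mathbb E(\kappa_k)\sigma_{T_k-}^2+\mathbb E(1-\kappa_k)\sigma_{T_k}^2=\tfrac12(\sigma_{s-}^2+\sigma_s^2)$ for $\kappa_k\sim U([0,1])$ gives the stated variance. When $X$ and $\sigma$ have no common jumps one has $\sigma_{T_k-}=\sigma_{T_k}$, so $R_k=\sigma_{T_k}(\sqrt{\kappa_k}\psi_{k-}+\sqrt{1-\kappa_k}\psi_{k+})$ is conditionally $N(0,\sigma_{T_k}^2)$ irrespective of $\kappa_k$, whence the limit is conditionally Gaussian; since the limit depends only on the jumps of $X$ and on $\sigma$ at the jump times and not on their enumeration, the conditional law is independent of the chosen weakly exhausting sequence $(T_k)$.
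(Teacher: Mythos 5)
Your proposal follows the same Jacod--Protter blueprint as the paper's proof: localization, the splitting $X=X(m)+X'(m)$ on $\Omega_n(m)$, a Taylor expansion of $H$ around the vector of big jumps combined with the stable convergence of $(R_-(n,p),R_+(n,p))$ (Lemma \ref{lem4}), negligibility of the remainder via the It\^o-formula estimate of Lemma \ref{lem2}, the limit $m\to\infty$ at the end, and the same conditional-moment computations as the paper's appendix. One step is genuinely different and, as far as I can see, cleaner than the paper's: your elimination of the last $d-l$ arguments. Exploiting $\partial_kL(\mathbf x,\mathbf 0)=0$ for $k>l$ to get the second-order bound $|H(\mathbf x,\mathbf y)-H(\mathbf x,\mathbf 0)|\le K|x_1|^{p_1}\cdots|x_l|^{p_l}\|\mathbf y\|^2$ on compacts, and then using that $\frac{1}{n^{d-l}}\sum_{i_{l+1},\dots,i_d}\|\mathbf y\|^2\le K n^{-1}[X,X]_t^n=O_{\PP}(n^{-1})$, handles big and small jumps in those coordinates in one stroke; the paper instead needs the first-order bound applied to $X(m)$ only (the term $\Psi_1^n(m)$, via \eqref{ord2} and the factor $m^{-(p-2)l}$) plus a separate treatment of the terms $\zeta_{k,j}^n(m),\tilde{\zeta}_{k,j}^n(m)$ with $j>0$ inside Proposition \ref{ap}.

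Two points in your sketch need repair, however. First, your claim that the tuples with at least one coordinate free of big jumps are ``asymptotically negligible'' is false as stated: for fixed $m$ such a coordinate contributes $\sqrt n\sum_i|\Delta_i^nX(m)|^{p_j}$, and since $\sum_i|\Delta_i^nX(m)|^{p_j}\toop\sum_{s\le t}|\Delta X(m)_s|^{p_j}>0$ (the small jumps of $X(m)$ do not disappear as $n\to\infty$), this quantity diverges; your power count $n^{3/2-p_j/2}$ is valid only for the continuous part. What is negligible is the \emph{difference} between these statistic terms and the matching small-jump terms of the limit (the $\zeta_k^n(m)$ in the paper's decomposition \eqref{dcom}); Lemma \ref{lem2} bounds exactly this difference by $\beta_m(t)$, and before it can be applied one must strip the perturbations $R(n,\mathbf p)/\sqrt n$ off the big-jump coordinates (the $\Phi_1^n(m)\,\Phi_2^n(m)$ estimate in the proof of Proposition \ref{ap}) and make the bound uniform in the frozen arguments via Lemma \ref{lem3}. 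You do name the right tool, but the argument must be organized as a cancellation, not as separate negligibility of individual sums. Second, you never account for the jumps in $(\lfloor nt\rfloor/n,t]$: the statistic sees increments only up to $\lfloor nt\rfloor/n$, while $V(H,X,l)_t$ sums jumps over $(0,t]^l$, so the proof requires $\sqrt n\sum_{\lfloor nt\rfloor/n<s\le t}|\Delta X_s|^{p}\toop0$ (see \eqref{rest} and the decomposition following it). This term cannot be absorbed into your replacement of $\lfloor nt\rfloor^{d-l}/n^{d-l}$ by $t^{d-l}$; it is precisely the obstruction to a functional version of the theorem, so omitting it leaves a genuine hole.
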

\begin{remark}
In the case $d=1$ this result can be found in Jacod \cite{J2} (see Theorem 2.11 and Remark 2.14 therein). A functional version of the central limit theorem in the given form does not exist even for $d=1$. For an explanation see Remark 5.1.3 in \cite{JP}. In order to obtain functional results one generally needs to consider the discretized sequence
\[
\sqrt{n} \Big(V(H,X,l)_t^n - V(H,X,l)_{\left\lfloor nt\right\rfloor /n}  \Big).
\]    
In the proof below we would have to show that all approximation steps hold in probability uniformly on compact sets (instead of just in probability), which seems to be out of reach with our methods. What we could show with our approach, though, is that Theorem \ref{stabvj} holds in the finite distribution sense in $t$.  
\end{remark}
\begin{remark}
In the case that the limit is $\mathcal{F}$-conditionally Gaussian we can get a standard central limit theorem by just dividing by the square root of the conditional variance, i.e.
\[
 \frac{\sqrt{n} \big(V(H,X,l)_t^n - V(H,X,l)_t  \big)}{\sqrt{\mathbb{E}(U(H,X,l)_t^2| \mathcal{F})}} \schw \mathcal{N}(0,1).
\]
Since the conditional variance is generally unknown, we might need to consistently estimate it in order to obtain a feasible central limit theorem.  
\end{remark}
 \begin{proof} In the appendix we will show that $U(H,X,l)_t$ is in fact well-defined and fulfills the aforementioned conditional properties. To simplify notations we will give a proof only for symmetric $L$ and $p_1=\dots=p_l=p$ for some $p>3$.  Note that in this case $H$ is symmetric in the first $l$ components, which implies 
\[
\partial_j H(x_1,\dots,x_l,0,\dots,0) =\partial_1 H(x_j, x_2,\dots,x_{j-1},x_1,x_{j+1},\dots,x_l,0,\dots,0).
\] 
Therefore, we have for fixed $j$
\begin{align*}
&\sum_{k_1,\dots,k_l:T_{k_1},\dots, T_{k_l} \leq t}  \partial_k H(\Delta X_{T_{k_1} },\dots , \Delta X_{T_{k_l}} ,\mathbf{0})R_{k_j}\\
=& \sum_{k_1,\dots,k_l:T_{k_1},\dots, T_{k_l} \leq t}  \partial_1 H(\Delta X_{T_{k_j} },\Delta X_{T_{k_2} },\dots,\Delta X_{T_{k_{j-1}} },\Delta X_{T_{k_1} },\Delta X_{T_{k_{j+1}} },\dots , \Delta X_{T_{k_l}} ,\mathbf{0})R_{k_j} \\
=& \sum_{k_1,\dots,k_l:T_{k_1},\dots, T_{k_l} \leq t}  \partial_1 H(\Delta X_{T_{k_1} },\dots , \Delta X_{T_{k_l}} ,\mathbf{0})R_{k_1},
\end{align*}
and thus the limit can be written as
\begin{align*}
 U(H,X,l)_t= lt^{d-l} \sum_{k_1\dots,k_l:T_{k_1},\dots, T_{k_l} \leq t} \partial_1 H(\Delta X_{T_{k_1} },\dots , \Delta X_{T_{k_l}} ,0 \dots,0)R_{k_1}.
\end{align*}
Later we will prove $\sqrt{n}(V(H,X,l)_{\frac{\left\lfloor nt\right\rfloor}{n}}-V(H,X,l)_t) \toop 0$ as $n \to \infty$, so it will be enough to show the discretized version of the central limit theorem, i.e. 
\begin{align}\label{limst1}
\xi_t^n :=\sqrt{n}(V(H,X,l)_t^n- V(H,X,l)_{\frac{\left\lfloor nt\right\rfloor}{n}}) \stab U(H,X,l)_t.
\end{align}
For the proof of this result we will use a lot of decompositions and frequently apply the following claim.
\begin{lemma}\label{lem6}
Let $(Z_n)_{n\in \mathbb{N}}$ be a sequence of random variables, where, for each $m \in \mathbb{N}$, we have a decomposition $Z_n = Z_n(m) + Z_n'(m)$. If there is a sequence $(Z(m))_{m \in \mathbb{N}}$ of random variables and a random variable $Z$ with
\[
Z_n(m) \xrightarrow[n\to \infty]{st} Z(m), \quad Z(m) \xrightarrow[m \to \infty]{\mathbb{P}} Z, \quad \text{and} \quad \lim_{m \to \infty} \limsup_{n\to \infty} \mathbb{P}(|Z_n'(m)| >\eta)=0 \quad \text{for all} \quad \eta >0,
\]
then
\[
Z_n \stab Z.
\]
\end{lemma}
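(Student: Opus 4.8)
The plan is to verify the defining property of stable convergence directly, i.e.\ that $\E(f(Z_n)Y)\to\WE(f(Z)Y)$ for every bounded continuous $f$ and every bounded, $\f$-measurable $Y$. The first reduction I would make is that it suffices to check this for bounded Lipschitz $f$: for fixed $Y$ one may assume $Y\geq0$ (split into positive and negative parts), in which case $B\mapsto\E(\mathbbm{1}_{\{Z_n\in B\}}Y)$ defines a finite measure on the Polish space $(E,\mathcal E)$ of total mass $\E(Y)=\WE(Y)$, the same for the limit; since bounded Lipschitz functions are convergence-determining for weak convergence of finite measures of equal mass, the general claim follows once it holds for such $f$. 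Fixing such an $f$ and $Y$, and rescaling so that $\|Y\|_\infty\leq1$ and $f$ is $1$-Lipschitz with $\|f\|_\infty\leq1$, I would split
\[
\E(f(Z_n)Y)-\WE(f(Z)Y)=A_n(m)+B_n(m)+C(m),
\]
where $A_n(m)=\E\big((f(Z_n)-f(Z_n(m)))Y\big)$, $B_n(m)=\E(f(Z_n(m))Y)-\WE(f(Z(m))Y)$, and $C(m)=\WE(f(Z(m))Y)-\WE(f(Z)Y)$.

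The middle and last terms are handled by the first two hypotheses. For each fixed $m$, the assumed $Z_n(m)\stab Z(m)$ gives $B_n(m)\to0$ as $n\to\infty$, so $\limsup_n|B_n(m)|=0$. For $C(m)$, the hypothesis $Z(m)\toop Z$ on the extension together with continuity of $f$ yields $f(Z(m))\to f(Z)$ in $\WP$-probability; since $f(Z(m))Y$ is uniformly bounded, dominated convergence gives $C(m)\to0$ as $m\to\infty$.

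The term $A_n(m)$ is where the third hypothesis enters. Using $|f(x)-f(y)|\leq\min(2,|x-y|)$ and $Z_n-Z_n(m)=Z_n'(m)$, I obtain for every $\eta>0$
\[
|A_n(m)|\leq\E\big(\min(2,|Z_n'(m)|)\big)\leq\eta+2\,\PP(|Z_n'(m)|>\eta).
\]
Taking $\limsup_{n\to\infty}$ and then $\lim_{m\to\infty}$, the third hypothesis forces $\lim_m\limsup_n|A_n(m)|\leq\eta$, and as $\eta>0$ is arbitrary this double limit is $0$. Combining the three bounds, for every $m$ one has $\limsup_n|\E(f(Z_n)Y)-\WE(f(Z)Y)|\leq\limsup_n|A_n(m)|+|C(m)|$; letting $m\to\infty$ on the right, while the left side does not depend on $m$, shows it must vanish, which is precisely $Z_n\stab Z$.

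I expect the only genuinely delicate point to be the opening reduction, namely justifying that bounded Lipschitz test functions are enough for stable convergence; this rests on identifying $\E(f(Z_n)Y)$ (for $Y\geq0$) with integration against a finite measure and invoking the Portmanteau characterization of weak convergence on a Polish space. Everything else is soft and uses only the three stated hypotheses; in particular no structural feature of the statistics $V(H,X,l)^n_t$ is needed, which is exactly what makes Lemma~\ref{lem6} an all-purpose device for assembling the many decompositions in the proof of Theorem~\ref{stabvj}.
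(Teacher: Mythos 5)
Your proof is correct. It is worth noting that the paper does not actually prove Lemma~\ref{lem6}: it is stated and then dispatched with a citation to Proposition~2.2.4 of \cite{JP}, so your self-contained argument is a genuine alternative rather than a retracing of the paper's text. Your argument is essentially the standard proof of that cited result: the decomposition into $A_n(m)+B_n(m)+C(m)$, the Lipschitz bound $|A_n(m)|\leq \eta + 2\,\PP(|Z_n'(m)|>\eta)$, the use of stable convergence for $B_n(m)$, and the dominated-convergence treatment of $C(m)$ are all sound, and the concluding double-limit argument (the left side independent of $m$, the right side tending to $0$) is airtight. The one step that genuinely needs justification is the opening reduction to bounded Lipschitz $f$, and your justification is valid: for fixed bounded $\f$-measurable $Y\geq 0$, stable convergence tested against $Y$ is exactly weak convergence of the finite measures $B\mapsto \E(\mathbbm{1}_{\{Z_n\in B\}}Y)$ to $B\mapsto \WE(\mathbbm{1}_{\{Z\in B\}}Y)$, their total masses agree because $\WP$ restricts to $\PP$ on $\f$, and the Portmanteau theorem for finite measures of equal mass may indeed be tested on bounded Lipschitz functions (normalize to probability measures when the mass is nonzero). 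One small point of rigor you glossed over: the hypotheses implicitly require all the limits $Z(m)$ and $Z$ to live on a common extension so that $Z(m)\toop Z$ makes sense, which is how the paper uses the lemma; your notation $\WE$ throughout tacitly assumes this and should be said once. What the paper's citation buys is brevity; what your route buys is the explicit observation that the lemma is a soft, purely measure-theoretic fact, requiring no structure whatsoever from the semimartingale statistics it is applied to, which is precisely why it can be reused so many times in the proofs of Theorems~\ref{stabvj} and~\ref{cltmc}.
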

\noindent For a proof of this result see \cite[Prop. 2.2.4]{JP}. 

For the proof of \eqref{limst1} we will successively split $\xi_t^n$ into several terms and then apply Lemma \ref{lem6}. As a first decomposition we use
\[
\xi_t^n = \mathbbm{1}_{\Omega_n(m)} \xi_t^n + \mathbbm{1}_{\Omega \backslash \Omega_n(m)} \xi_t^n.
\]
Since $\mathbb{P}(\Omega_n(m)) \to 1$ as $n\to \infty$,  the latter term converges to $0$ almost surely as $n \to \infty$, so we can focus on the first summand, which we further decompose into

\begin{align}\label{dcom}
\mathbbm{1}_{\Omega_n(m)} \xi_t^n=\mathbbm{1}_{\Omega_n(m)}\Big(\zeta^n(m) + \sum_{k=0}^l \sum_{j=0}^{d-l} \big(\zeta_{k,j}^n(m) - \tilde{\zeta}_{k,j}^n(m) \big)-\sum_{k=1}^l \zeta_k^n(m)\Big)
\end{align}
with
\begin{align*}
\zeta^n(m)&= \sqrt{n}\Big( \frac{1}{n^{d-l}}\sum_{\textbf{i}\in \mathcal{B}_t^n (d)} H(\Delta_{\textbf{i}}^n X(m)) - \frac{\left\lfloor nt\right\rfloor}{n^{d-l}}^{d-l}\sum_{u_1,\dots,u_l \leq \frac{\left\lfloor nt\right\rfloor}{n}} H(\Delta X(m)_{u_1},\dots,\Delta X(m)_{u_l},\mathbf{0})\Big)  \\
\zeta_{k,j}^n(m)&= \frac{\sqrt{n}}{n^{d-l}}  \sum_{\textbf{p},\textbf{q} \in \mathcal{P}_t^n(m)^{k \times j}} \sideset{}{'}\sum_{\textbf{i} \in \mathcal{B}_t^n (l-k) \atop \textbf{r} \in \mathcal{B}_t^n (d-l-j)} \binom{l}{k} \binom{d-l}{j}  H\Big(\Delta X_{S_{\textbf{p}}}+\frac{R(n,\textbf{p})}{\sqrt{n}} , \Delta_{\textbf{i}}^n X(m),\Delta X_{S_{\textbf{q}}}+\frac{R(n,\textbf{q})}{\sqrt{n}} , \Delta_{\mathbf{r}}^n X(m)\Big)  \\
\tilde{\zeta}_{k,j}^n(m) &= \frac{\sqrt{n}}{n^{d-l}}  \sum_{\textbf{p},\textbf{q} \in \mathcal{P}_t^n(m)^{k \times j}} \sideset{}{'}\sum_{\textbf{i} \in \mathcal{B}_t^n (l-k) \atop \textbf{r} \in \mathcal{B}_t^n (d-l-j)} \binom{l}{k} \binom{d-l}{j}  H\Big(\frac{1}{\sqrt{n}} R(n,\textbf{p}), \Delta_{\textbf{i}}^n X(m),\frac{1}{\sqrt{n}} R(n,\textbf{q}), \Delta_{\textbf{r}}^n X(m) \Big)  \\
\zeta_k^n(m) &= \sqrt{n}\frac{\left\lfloor nt\right\rfloor}{n^{d-l}}^{d-l} \sum_{\textbf{p}\in \mathcal{P}_t^n(m)^k} \sum_{u_{k+1},\dots, u_l \leq \frac{\left\lfloor nt\right\rfloor}{n}} \binom{l}{k} H\Big(\Delta X_{S_{\textbf{p}}} , \Delta X_{u_{k+1}}(m),\dots,\Delta X_{u_l}(m),\mathbf{0}\Big).
\end{align*}
The prime on the sums indicates that we sum only over those indices $\mathbf{i}$ and $\mathbf{r}$ such that $\Delta_{\mathbf{i}}^n X'(m)$ and $\Delta_{\mathbf{r}}^n X'(m)$ are vanishing, which in other word means that no big jumps of $X$ occur in the corresponding time intervals.  
 
The basic idea behind the decomposition is that we distinguish between intervals $(\frac{i-1}{n},\frac{i}{n}]$ where $X$  has a big jump and where not. Essentially we replace the original statistic $\xi_t^n$ by the same statistic $\zeta^n(m)$ for the process $X(m)$ instead of $X$. Using the trivial identity
\[
\sum_{\textbf{i}\in \mathcal{B}_t^n (d)} H(\Delta_{\textbf{i}}^n X)=\sum_{\textbf{i}\in \mathcal{B}_t^n (d)} H(\Delta_{\textbf{i}}^n X(m))+\sum_{\textbf{i}\in \mathcal{B}_t^n (d)} \Big( H(\Delta_{\textbf{i}}^n X)- H(\Delta_{\textbf{i}}^n X(m)) \Big)
\]
we can see that an error term appears by doing this. Of course, we have $\Delta_{\textbf{i}}^n X(m)=\Delta_{\textbf{i}}^n X$ if no big jump occurs. In the decomposition above, $\zeta_{k,j}^n(m)-\tilde{\zeta}_{k,j}^n(m)$ gives the error term if we have $k$ big jumps in the first $l$ coordinates and $j$ big jumps in the last $d-l$ coordinates. In the same manner the term $\zeta_k^n(m)$ takes into account that we might have big jumps in $k$ arguments of $H(\Delta X_{u_1},\dots,\Delta X_{u_l},\mathbf{0})$. All the binomial coefficients appear because of the symmetry of $H$ in the first $l$ and the last $d-l$ arguments. Note also that this decomposition is not valid without the indicator function $\mathbbm{1}_{\Omega_n(m)}$. 

In the appendix we will prove the following claim.
\begin{proposition}\label{ap}
It holds that
\begin{align*} 
\lim_{m \to \infty} \limsup_{n\to \infty} \mathbb{P}\Bigg(\mathbbm{1}_{\Omega_n(m)}\Big|\sum_{k=0}^l \sum_{j=0}^{d-l} \big(\zeta_{k,j}^n(m) - \tilde{\zeta}_{k,j}^n(m)\big) -\sum_{k=1}^l \zeta_k^n(m)-(\zeta_{l,0}^n(m)-\zeta_l^n(m))\Big| >\eta \Bigg)=0
\end{align*}
for all $\eta>0$.
\end{proposition}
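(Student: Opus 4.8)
The plan is to bound every summand separately in $L^1$ on $\Omega_n(m)$ and then invoke Markov's inequality, so that it suffices to prove $\lim_{m\to\infty}\limsup_{n\to\infty}\mathbb{E}\big(\mathbbm{1}_{\Omega_n(m)}|\cdot|\big)=0$ for each group. Sorting the terms by the pair $(k,j)$, three regimes appear. The summands with $j\geq 1$, together with $\tilde\zeta_{l,0}^n(m)$, are handled by crude order counting: in $\zeta_{k,j}^n(m)$ and $\tilde\zeta_{k,j}^n(m)$ the $k+j$ big jumps contribute only finitely many indices, the $l-k$ continuous coordinates of the first block are $O(1)$ by \eqref{ord1}, and the $d-l-j$ continuous coordinates of the last block produce a factor of order $n^{d-l-j}$ since $L$ is bounded; together with the prefactor $\sqrt n/n^{d-l}$ this yields a bound of order $n^{1/2-j}$, which vanishes for $j\geq 1$, and of order $n^{1/2-l}$ for $\tilde\zeta_{l,0}^n(m)$. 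For $(k,j)=(0,0)$ one has $\zeta_{0,0}^n(m)=\tilde\zeta_{0,0}^n(m)$ identically, so this term drops out.

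The core is the regime $1\leq k\leq l-1$, $j=0$, where one must show $\zeta_{k,0}^n(m)-\tilde\zeta_{k,0}^n(m)-\zeta_k^n(m)\to 0$ in the iterated limit. As $\tilde\zeta_{k,0}^n(m)$ is again of order $n^{1/2-k}$, it is enough to control $\zeta_{k,0}^n(m)-\zeta_k^n(m)$. I would insert the intermediate term
\[
A_n(m)=\frac{\sqrt n}{n^{d-l}}\binom{l}{k}\sum_{\mathbf{p}\in\mathcal{P}_t^n(m)^k}\sideset{}{'}\sum_{\mathbf{i}\in\mathcal{B}_t^n(l-k),\,\mathbf{r}\in\mathcal{B}_t^n(d-l)}H\big(\Delta X_{S_{\mathbf{p}}},\Delta_{\mathbf{i}}^n X(m),\Delta_{\mathbf{r}}^n X(m)\big),
\]
obtained from $\zeta_{k,0}^n(m)$ by deleting the remainders $R(n,\mathbf{p})/\sqrt n$ in the $k$ big-jump coordinates, and treat $\zeta_{k,0}^n(m)-A_n(m)$ and $A_n(m)-\zeta_k^n(m)$ separately.

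The difference $A_n(m)-\zeta_k^n(m)$ contains no remainders and is a pure law-of-large-numbers comparison: I would telescope over the $l-k$ continuous coordinates of the first block and the $d-l$ coordinates of the last block, replacing one increment sum by its limit at a time. Each first-block replacement is exactly the setting of Lemma \ref{lem2} with the remaining coordinates frozen in a compact set, hence bounded by $\beta_m(t)\to 0$; this is where $p>3$ enters. Each last-block replacement is controlled by the defining property of $\mathcal{A}_l(d)$: the first-order $\mathbf{y}$-derivatives of $L$ vanish at $\mathbf{y}=\mathbf{0}$, so the normalized increment sum differs from its limit only at second order $O(n^{-1})$, which absorbs the factor $\sqrt n$. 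For $\zeta_{k,0}^n(m)-A_n(m)$ I would expand $H$ around $\Delta X_{S_{\mathbf{p}}}$ using Lemma \ref{lem3}; the prefactor $\sqrt n$ cancels the $1/\sqrt n$ in the first-order term $\partial_i H\cdot R(n,p_i)/\sqrt n$, while all higher-order terms carry additional powers of $1/\sqrt n$ and disappear. The decisive observation is that, because $k<l$, every surviving first-order term still contains the factor $\prod_{a=k+1}^l|\Delta_{i_a}^n X(m)|^{p_a}$; summing over $\mathbf{i}$ produces a power variation of $X(m)$ of order $p_a>2$, whose expectation is bounded by $m^{-(p_a-2)}\int\Gamma^2\,d\lambda$ and thus tends to $0$ as $m\to\infty$. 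This is exactly why the mixed terms ($k<l$) vanish in the iterated limit while the leading term ($k=l$) survives to form $U(H,X,l)_t$.

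The main obstacle is this last step, where one must keep the $\sqrt n$ scaling so that the first-order remainder term is genuinely $O(1)$ for fixed $m$, and yet extract from it the small-jump power-variation factor that forces the iterated limit to be zero. Bounding the factors $R(n,p_i)$, which are only $O_P(1)$, against this power variation (via Cauchy--Schwarz and the moment estimates \eqref{ord1} and \eqref{ord2}), and checking that the second-order Taylor remainder of $H$ stays integrable near possibly small big-jump sizes, is the delicate technical point.
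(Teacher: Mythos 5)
Your overall architecture matches the paper's proof. Part (i) of the paper disposes of all terms with $j\geq 1$ and all $\tilde{\zeta}_{k,0}^n(m)$, $k\geq 1$, by exactly your order counting (using \eqref{ord1} for the untouched first-block coordinates and tightness of the $R(n,p)$), notes $\zeta_{0,0}^n(m)=\tilde{\zeta}_{0,0}^n(m)$, and thereby reduces everything to $\sum_{k=1}^{l-1}\big(\zeta_{k,0}^n(m)-\zeta_k^n(m)\big)$. Part (ii) then inserts the same intermediate quantity as your $A_n(m)$ (the paper's $\theta_k^n(m)$) and removes the remainders $R(n,\mathbf{p})/\sqrt{n}$ by a mean-value argument whose first-order term is a tight factor times a small-jump power variation of order $p>2$, which vanishes as $m\to\infty$ precisely because $l-k\geq 1$; your ``decisive observation'' is the same as the paper's. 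Your use of the $\mathcal{A}_l(d)$ property (vanishing first-order $\mathbf{y}$-derivatives at $\mathbf{y}=\mathbf{0}$, hence a second-order error of size $O(n^{-1})$ per last-block coordinate that beats the factor $\sqrt{n}$) to zero out the last $d-l$ coordinates is a valid alternative to the paper's route, which instead applies the mean value theorem with merely bounded derivatives and again extracts smallness from the factor $m^{-(p-2)(l-k)}$ coming from the first-block increments.

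There is, however, one genuine gap, and it sits exactly at the step the paper flags as the main difficulty. In the comparison $A_n(m)-\zeta_k^n(m)$ you claim that each first-block replacement ``is exactly the setting of Lemma \ref{lem2} with the remaining coordinates frozen in a compact set.'' It is not: Lemma \ref{lem2} is an \emph{expectation} bound, uniform over \emph{deterministic} arguments $x_2,\dots,x_d$, whereas the coordinates you want to freeze are random variables ($\Delta_{\mathbf{i}}^n X(m)$, $\Delta X(m)_{\mathbf{s}}$, $\Delta X_{S_{\mathbf{p}}}$) built from the same path as the sum being estimated. A bound of the form $\sup_{\mathbf{x}}\mathbb{E}|F_n(\mathbf{x})|\leq \beta_m(t)$ does not imply that $\mathbb{E}|F_n(Y)|$ is small for a random $Y$ correlated with $F_n$; that would require control of $\mathbb{E}\sup_{\mathbf{x}}|F_n(\mathbf{x})|$, which Lemma \ref{lem2} does not provide. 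The paper's fix (in the $\Psi_2^n$ argument it refers back to) is precisely Lemma \ref{lem3}: expand the dependence on the offending coordinates into iterated integrals of derivatives over deterministic dummy variables, bound the result by a random variable $\tilde{\Theta}_k^n(m)$ that no longer depends on the frozen coordinates, and only then apply Lemma \ref{lem2} to the integrand for each fixed dummy value. Since you invoke Lemma \ref{lem3} in the adjacent step $\zeta_{k,0}^n(m)-A_n(m)$, the repair is within your toolkit, but as written this step of your argument does not go through.
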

So in view of Lemma \ref{lem6} we are left with considering the terms $\zeta_{l,0}^n (m)- \zeta_l^n(m)$ and $\zeta_n(m)$, where the first one is the only one that contributes to the limiting distribution. We will start with proving the three assertions
\begin{align}\label{as1}
\lim_{m \to \infty} \limsup_{n\to \infty} \mathbb{P}(\mathbbm{1}_{\Omega_n(m)}|\zeta_{l,0}^n(m)-\hat{\zeta}_{l,0}^n(m)| >\eta)=0 \quad \text{for all} \quad \eta >0,
\end{align}
\begin{align}\label{as2}
\mathbbm{1}_{\Omega_n(m)}(\hat{\zeta}_{l,0}^n(m)-\zeta_l^n(m)) \stab U(H,X'(m),l)_t, \quad \text{as } n \to \infty,
\end{align}
\begin{align}\label{as3}
U(H,X'(m))_t \stackrel{\tilde{\mathbb{P}}}{\longrightarrow} U(H,X,l)_t, \quad \text{as } m \to \infty,
\end{align}
 where
\begin{align*}
\hat{\zeta}_{l,0}^n(m):= \frac{\sqrt{n}}{n^{d-l}}  \sum_{\textbf{p} \in \mathcal{P}_t^n(m)^{l}} \sum_{\textbf{j} \in \mathcal{B}_t^n (d-l)}  H\Big(\Delta X_{S_{\textbf{p}}}+\frac{R(n,\textbf{p})}{\sqrt{n}} ,\mathbf{0}\Big).
\end{align*}
For \eqref{as1} observe that we have
\begin{align*}
&\mathbbm{1}_{\Omega_n(m)}|\zeta_{l,0}^n(m)-\hat{\zeta}_{l,0}^n(m)|\\
 \leq & \mathbbm{1}_{\Omega_n(m)}  \sum_{\textbf{p} \in \mathcal{P}_t(m)^{l}}  \Big|\Delta X_{S_{\mathbf{p}}} + \frac{1}{\sqrt{n}}R(n,\mathbf{p})  \Big|^p \frac{\sqrt{n}}{n^{d-l}}\sum_{\textbf{j} \in \mathcal{B}_t^n (d-l)}\sum_{k=1}^{d-l}\sup_{\substack{\mathbf{x}\in [-2A,2A]^l \\ \mathbf{y} \in [-2/m,2/m]^{d-l}}}  |\partial_k L(\mathbf{x},\mathbf{y})| |\Delta_{j_{k}}^n X(m)|+O_{\mathbb{P}}(n^{-1/2})
\end{align*}
by the mean value theorem. The error of small order in the estimate above is due to the finitely many large jumps, which are included in the sum over $\textbf{j}$ now, but do not appear in $\zeta_{l,0}^n(m)$ by definition. Clearly,
\[
\lim_{M \to \infty} \limsup_{m\to \infty} \limsup_{n\to\infty} \mathbb{P} \Big(  \sum_{\textbf{p} \in \mathcal{P}_t(m)^{l}}  \Big|\Delta X_{S_{\mathbf{p}}} + \frac{1}{\sqrt{n}}R(n,\mathbf{p})  \Big|^p > M\Big) =0 ,
\]
and by Lemma \ref{lem1} we have
\begin{align*}
&\mathbb{E}\Big( \frac{\sqrt{n}}{n^{d-l}}\sum_{\textbf{j} \in \mathcal{B}_t^n (d-l)}\sum_{k=1}^{d-l}\sup_{\mathbf{x}\in [-2A,2A]^l} \sup_{\mathbf{y} \in [-2/m,2/m]^{d-l}} |\partial_k L(\mathbf{x},\mathbf{y})| |\Delta_{j_{k}}^n X(m)|\Big)\\
 \leq & K (1+mn^{-1/2}) \sup_{\mathbf{x}\in [-2A,2A]^l} \sup_{\mathbf{y} \in [-2/m,2/m]^{d-l}} |\partial_k L(\mathbf{x},\mathbf{y})|,
\end{align*}
which converges to $0$ if we first let $n \to \infty$ and then $m \to \infty$, since $L \in \mathcal{A}_l(d)$ and $[-2A,2A]^l$ is compact. This immediately implies \eqref{as1}.

For the proof of \eqref{as2} we need another Lemma, which can be found in \cite[Prop. 4.4.10]{JP}.
\begin{lemma}\label{lem4}
 For fixed $p \in \mathbb{N}$ the sequence $(R(n,p))_{n\in \mathbb{N}}$ is bounded in probability, and
\[
(R(n,p)_{-},R(n,p)_{+})_{p \geq 1} \stab (R_{p-},R_{p+})_{p \geq 1}
\]
as $n \to \infty$.
\end{lemma}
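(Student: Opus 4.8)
The plan is to strip $R_\pm(n,p)$ down to the continuous martingale part of $X$ and then to analyse the scaling limit of a Brownian increment that is cut at a Poissonian jump time. Write $i=i(n,p)$ for the index with $\frac{i-1}{n}<S_p\le\frac{i}{n}$, so that $R(n,p)=\sqrt n(\Delta_i^n X-\Delta X_{S_p})$ is $\sqrt n$ times the increment over $(\frac{i-1}{n},\frac{i}{n}]$ with the jump at $S_p$ removed. First I would show that the drift and the small jumps are asymptotically irrelevant. The drift term is $\sqrt n$ times an integral of a bounded process over an interval of length at most $1/n$, hence $O_{\mathbb P}(n^{-1/2})$. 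For the compensated jumps I would split at a level $\epsilon>0$: the probability that a jump of size larger than $\epsilon$ falls into the shrinking interval tends to $0$, while the part coming from jumps of size at most $\epsilon$ has conditional second moment bounded by a constant times $\int_{\{|\delta|\le\epsilon\}}\delta^2\,\lambda(dz)$, which is arbitrarily small for small $\epsilon$. This already yields the asserted boundedness in probability and reduces the claim to the two continuous martingale increments $\sqrt n(X^c_{S_p-}-X^c_{(i-1)/n})$ and $\sqrt n(X^c_{i/n}-X^c_{S_p})$, where $X^c=\int\sigma\,dW$.

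Second, I would control the normalised position of the jump inside its cell, namely $\kappa_n(p):=nS_p-(i(n,p)-1)\in(0,1]$. The crucial probabilistic input is that, jointly in $p\ge1$, the variables $\kappa_n(p)$ converge stably to an i.i.d.\ family $(\kappa_p)_{p\ge1}$ of $U([0,1])$ random variables that is independent of $\mathcal F$. This rests on the fact that the successive jump times $S_p$ have non-atomic conditional laws, so that their residues modulo $1/n$ equidistribute and decouple from the rest of the path; this is exactly the mechanism producing the uniform variables $\kappa_k$ on the very good extension.

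Third, I would identify the Gaussian limit conditionally on $\mathcal F$ and on the positions. Since $\sigma$ is c\`adl\`ag, replacing $\sigma_s$ by the fixed value $\sigma_{S_p-}$ on the pre-jump sub-interval and by $\sigma_{S_p}$ on the post-jump sub-interval costs only an $o_{\mathbb P}(1)$ error, so that
\[
R_-(n,p)=\sigma_{S_p-}\,\sqrt n\,(W_{S_p}-W_{(i-1)/n})+o_{\mathbb P}(1),\qquad R_+(n,p)=\sigma_{S_p}\,\sqrt n\,(W_{i/n}-W_{S_p})+o_{\mathbb P}(1).
\]
The two rescaled Brownian increments live on disjoint intervals, hence are independent, and conditionally on $\kappa_n(p)$ they are centred Gaussian with variances $\kappa_n(p)$ and $1-\kappa_n(p)$. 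In the limit they become $\sqrt{\kappa_p}\,\psi_{p-}$ and $\sqrt{1-\kappa_p}\,\psi_{p+}$ with independent standard normal $\psi_{p\pm}$, and the factors $\sigma_{S_p-}$, $\sigma_{S_p}$ are precisely the coefficients appearing in $R_{p-}$ and $R_{p+}$.

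Finally, I would assemble the joint stable convergence over all $p$. The cleanest route is to compute the $\mathcal F$-conditional characteristic function of a finite vector $(R_-(n,p),R_+(n,p))_{p\le P}$ and to show it converges to the product form dictated by the limit: independence across distinct $p$ follows because for large $n$ the jumps $S_1,\dots,S_P$ sit in distinct cells, and the conditional Gaussian structure follows by combining the second and third steps. I expect the main obstacle to be exactly this combination: proving that the equidistribution of the positions and the Brownian scaling hold \emph{jointly} and \emph{stably}, so that the limiting $\kappa_p$ and $\psi_{p\pm}$ are genuinely independent of $\mathcal F$ and of one another, all while tracking carefully that the jump contribution is negligible only in probability and not in $L^2$. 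The passage from all finite sub-vectors to the full sequence $p\ge1$ is then a routine tightness argument.
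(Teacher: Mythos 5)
The paper never proves this lemma: its entire ``proof'' is the citation \cite[Prop.~4.4.10]{JP}, and the statement is lifted verbatim from there. Your sketch therefore attempts something the paper does not, namely to reconstruct the argument behind that citation, and the route you take is in fact the standard one: localize, kill the drift and the compensated small jumps by a shrinking-interval $L^2$ estimate (splitting the jump measure at a level $\epsilon$ and using $\int\Gamma^2\lambda(dz)<\infty$), note that the finitely many other big jumps a.s.\ leave the cell of $S_p$ for large $n$, freeze $\sigma$ at $\sigma_{S_p-}$ and $\sigma_{S_p}$ on the two half-cells using the c\`adl\`ag property, and couple the two rescaled Brownian increments with the normalized position $\kappa_n(p)=nS_p-(i-1)$. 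All of these reductions are correct as stated, and they do yield both the boundedness in probability and the reduction of the stable limit to the Gaussian/uniform picture.

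The one place where your sketch understates the difficulty is your second step. Equidistribution of the residues $nS_p \bmod 1$ does \emph{not} follow from non-atomicity of the law of $S_p$ alone (a law supported on a Cantor-type set has non-atomic law but its residues along $n=3^k$ never become uniform); what saves the argument is the explicit Poisson structure -- the $S_p$ have densities, and $\mathfrak{p}$ has a deterministic compensator and is independent of $W$. More importantly, what is needed is not merely that each $\kappa_n(p)$ becomes uniform, but that the whole family $\bigl(\kappa_n(p),\sqrt{n}(W_{S_p}-W_{(i-1)/n}),\sqrt{n}(W_{i/n}-W_{S_p})\bigr)_{p\geq1}$ converges \emph{stably}, with a limit that is i.i.d.\ in the required sense and independent of all of $\mathcal{F}$ (not just of the path of $X$). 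This joint decoupling is exactly the mathematical content of \cite[Prop.~4.4.10]{JP} and its auxiliary results, and it is the step you yourself flag as ``the main obstacle'' and defer to a conditional characteristic function computation (condition on $\mathfrak{p}$, use that $W$ stays a Brownian motion, integrate out the cell positions). So your outline is sound and matches the proof in the cited reference, but the crux is asserted rather than carried out. A small final remark: no tightness argument is needed to pass from finite sub-vectors to the full sequence, since stable convergence in $\mathbb{R}^{\mathbb{N}}$ with the product topology is equivalent to finite-dimensional stable convergence.
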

Then we have, by the mean value theorem, Lemma \ref{lem4}, the properties of stable convergence, and the symmetry of $H$ in the first $l$ components
\begin{align*}
&\mathbbm{1}_{\Omega_n(m)}(\hat{\zeta}_{l,0}^n(m)-\zeta_l^n(m)) \\
=&\sqrt{n}\mathbbm{1}_{\Omega_n(m)}\Bigg( \frac{\left\lfloor nt\right\rfloor^{d-l}}{n^{d-l}}  \sum_{\textbf{p} \in \mathcal{P}_t^n(m)^{l}}   \Big[ H\Big(\Delta X_{S_{\textbf{p}}}+\frac{1}{\sqrt{n}} R(n,\textbf{p}),\mathbf{0}\Big)-   H\Big(\Delta X_{S_{\textbf{p}}} ,\mathbf{0}\Big)\Big] \Bigg) \\
& \quad \quad \quad\stab U(H,X'(m),l)_t=l t^{d-l} \sum_{\textbf{p} \in \mathcal{P}_t(m)^{l}}\partial_1 H\Big(\Delta X_{S_{\textbf{p}}},\mathbf{0}\Big) R_{p_1} \quad \text{as} \quad n\to \infty, 
\end{align*}
i.e.\ \eqref{as2}. For the proof of \eqref{as3} we introduce the notation $\mathcal{P}_t=\left\{p\in \mathbb{N} | S_p \leq t \right\}$. We then use the decomposition 
\begin{align*}
U (H,X,l)_t - U(H,X'(m),l)_t& = lt^{d-l}\sum_{k=1}^l \sum_{\mathbf{p} \in \mathcal{P}_t^{k-1}} \sum_{p_k \in \mathcal{P}_t \backslash \mathcal{P}_t(m)} \sum_{\mathbf{r} \in \mathcal{P}_t(m)^{l-k}} \partial_1 H(\Delta X_{S_{\mathbf{p}}},\Delta X_{S_{p_k}},\Delta X_{S_{\mathbf{r}}},\mathbf{0}) R_{p_1} \\
&=: lt^{d-l}\sum_{k=1}^l \psi_k (m).
\end{align*}
We have to show that, for each $k$, $\psi_k(m)$ converges in probability to $0$ as $m \to \infty$. We will give a proof only for the case $k=1$. Therefore, define the set
\[
A(M):=\left\{\omega \in \Omega \Bigg| \sum_{s \leq t} (|\Delta X_s(\omega)|^p+|\Delta X_s(\omega)|^{2p}+|\Delta X_s(\omega)|^{2p-2}) \leq M\right\}, \quad M \in \mathbb{R}_{+}. 
\] 
Then we have
\begin{align}\label{eq}
\tilde{\mathbb{P}}(|\psi_1(m)| > \eta) \leq \tilde{\mathbb{P}}(|\psi_1(m)| \mathbbm{1}_{A(M)} > \eta/2)+\mathbb{P}(\Omega \backslash A(M)).
\end{align}
By the continuity of $L$ and $\partial_1 L$, and since the jumps of $X$ are uniformly bounded in $\omega$, we get
\begin{align*}
&\tilde{\mathbb{P}}(|\psi_1(m)| \mathbbm{1}_{A(M)} > \eta/2) 
 \leq K \mathbb{E}(  \mathbbm{1}_{A(M)} \tilde{\mathbb{E}}( \psi_1(m)^2 | \mathcal{F}   ) )  \\
\leq& K \mathbb{E}\bigg(\mathbbm{1}_{A(M)}  \sum_{q \in \mathcal{P}_t \backslash \mathcal{P}_t(m)}\bigg(  \sum_{\mathbf{r} \in \mathcal{P}_t(m)^{l-1}} \partial_1 H(\Delta X_{S_q},\Delta X_{S_{\mathbf{r}}},0,\dots,0)\bigg)^2 \bigg) \\
\leq& K \mathbb{E}\bigg(\mathbbm{1}_{A(M)}  \sum_{q \in \mathcal{P}_t \backslash \mathcal{P}_t(m)}(|\Delta X_{S_q}|^p +|\Delta X_{S_q}|^{p-1})^2 \bigg(  \sum_{r \in \mathcal{P}_t(m)} |\Delta X_{S_r}|^p \bigg)^{2(l-1)} \bigg)\\
\leq& K M^{2(l-1)} \mathbb{E}\bigg(\mathbbm{1}_{A(M)}  \sum_{q \in \mathcal{P}_t \backslash \mathcal{P}_t(m)}(|\Delta X_{S_q}|^{2p} +|\Delta X_{S_q}|^{2p-2})\bigg) \to 0 \quad \text{as} \quad m \to \infty
\end{align*}
by the dominated convergence theorem. Since the second summand in \eqref{eq} is independent of $m$ and converges to $0$ as $M \to \infty$, we have
\[
\tilde{\mathbb{P}}(|\psi_1(m)| > \eta) \to 0 \quad \text{for all} \quad \eta >0.
\]
The proof for the convergence in probability of $\psi_k(m)$ to $0$ for $2\leq k\leq l$ is similar. 

It remains to show that
\begin{align}\label{as4}
\lim_{m \to \infty} \limsup_{n \to \infty} \mathbb{P} (\mathbbm{1}_{\Omega_n (m)} |\zeta^n(m)| >\eta) =0 
\end{align}
for all $\eta >0$. 

Again, we need several decompositions. We have
 \begin{align*}
\zeta^n (m) 
=&\sqrt{n}\Big( \frac{1}{n^{d-l}}\sum_{\textbf{i}\in \mathcal{B}_t^n (d)} H(\Delta_{\textbf{i}}^n X(m)) - \frac{\left\lfloor nt\right\rfloor}{n^{d-l}}^{d-l}\sum_{u_1,\dots,u_l \leq \frac{\left\lfloor nt\right\rfloor}{n}} H(\Delta X(m)_{u_1},\dots,\Delta X(m)_{u_l},\mathbf{0})\Big)\\
=& \sqrt{n}\Big( \frac{1}{n^{d-l}}\sum_{\textbf{i}\in \mathcal{B}_t^n (d)} H(\Delta_{\textbf{i}}^n X(m)) - \frac{\left\lfloor nt\right\rfloor}{n^{d-l}}^{d-l}\sum_{\textbf{i}\in \mathcal{B}_t^n (l)} H(\Delta_{\textbf{i}}^n X(m),\mathbf{0})\Big)\\
&+\sqrt{n}\Big( \frac{\left\lfloor nt\right\rfloor}{n^{d-l}}^{d-l}\sum_{\textbf{i}\in \mathcal{B}_t^n (l)} H(\Delta_{\textbf{i}}^n X(m),\mathbf{0}) - \frac{\left\lfloor nt\right\rfloor}{n^{d-l}}^{d-l}\sum_{u_1,\dots,u_l \leq \frac{\left\lfloor nt\right\rfloor}{n}} H(\Delta X(m)_{u_1},\dots,\Delta X(m)_{u_l},\mathbf{0})\Big) \\
=&:\Psi_1^n(m) + \Psi_2^n(m).
\end{align*}
First observe that we obtain by the mean value theorem, and since $X$ is bounded,
\begin{align*}
&\mathbbm{1}_{\Omega_n (m)}|\Psi_1^n(m)| \\
=&\frac{\sqrt{n}}{n^{d-l}}\mathbbm{1}_{\Omega_n (m)} \sum_{\textbf{i}\in \mathcal{B}_t^n (d)} |\Delta_{i_1}^n X(m) \cdots \Delta_{i_l}^n X(m)|^p|L(\Delta_{\mathbf{i}}^n X(m))-L(\Delta_{i_1}^n X(m),\dots,\Delta_{i_l}^n X(m),\mathbf{0})| \\
\leq &  K \mathbbm{1}_{\Omega_n (m)}\frac{\sqrt{n}}{n^{d-l}} \sum_{\textbf{i}\in \mathcal{B}_t^n (d)} \sum_{k=l+1}^d |\Delta_{i_1}^n X(m) \cdots \Delta_{i_l}^n X(m)|^p |\Delta_{i_{k}}^n X(m)| \\
= & K (d-l)\mathbbm{1}_{\Omega_n (m)}\frac{\sqrt{n}}{n^{d-l}} \sum_{\textbf{i}\in \mathcal{B}_t^n (d)} |\Delta_{i_1}^n X(m) \cdots \Delta_{i_l}^n X(m)|^p |\Delta_{i_{l+1}}^n X(m)| \\
\leq & \frac{K(d-l)}{m^{(p-2)l}} \frac{1}{\sqrt{n}} \mathbbm{1}_{\Omega_n (m)} \sum_{\textbf{i}\in \mathcal{B}_t^n (l+1)}|\Delta_{i_1}^n X(m) \cdots \Delta_{i_l}^n X(m)|^2 |\Delta_{i_{l+1}}^n X(m)|.
\end{align*}
By \eqref{ord2} and $\limsup_{n \to \infty} K(m,n) \leq K$ we get
\[
\lim_{m\to \infty}\limsup_{n \to \infty}\mathbb{E}(\mathbbm{1}_{\Omega_n (m)} |\Psi_1^n(m)| )  = 0.
\]

\noindent When showing that $\Psi_2^n(m)$ converges to $0$ we can obviously restrict ourselves to the case $l=d$. We need further decompositions:
\begin{align*}
\Psi_2^n(m)&=\sqrt{n}\sum_{k=1}^d \Big(\sum_{\textbf{i}\in \mathcal{B}_t^n (k)}\sum_{\textbf{s}\in (0,\frac{\left\lfloor nt\right\rfloor}{n}]^{d-k}} H(\Delta_{\textbf{i}}^n X(m),\Delta X(m)_{\textbf{s}}) -\sum_{\textbf{i}\in \mathcal{B}_t^n (k-1)}\sum_{\textbf{s}\in (0,\frac{\left\lfloor nt\right\rfloor}{n}]^{d-k+1}} H(\Delta_{\textbf{i}}^n X(m),\Delta X(m)_{\mathbf{s}}) \Big) \\
&=:\sum_{k=1}^d \Psi_2^n(m,k).
\end{align*}
For a fixed $k$ we have
\begin{align*}
&\Psi_2^n(m,k)= \sum_{\textbf{i}\in \mathcal{B}_t^n (k-1)} |\Delta_{i_1}^n X(m) \cdots \Delta_{i_{k-1}}^n X(m)|^p \sum_{\mathbf{s} \in (0,\frac{\left\lfloor nt\right\rfloor}{n}]^{d-k} } |\Delta X(m)_{s_1} \cdots \Delta X(m)_{s_{d-k}}|^p \\
\times & \sqrt{n} \Big( \sum_{j=1}^{\left\lfloor nt\right\rfloor} |\Delta_j^n X(m)|^p L(\Delta_{\mathbf{i}}^n X(m),\Delta_j^n X(m),\Delta X(m)_{\mathbf{s}}) - \sum_{u \leq \frac{\left\lfloor nt\right\rfloor}{n}} |\Delta X(m)_u|^p L(\Delta_{\mathbf{i}}^n X(m),\Delta X(m)_u,\Delta X(m)_{\mathbf{s}})\Big),
\end{align*}
where we denote the term in the second line by $\Theta_k^n(m,\mathbf{i},\mathbf{s})$. What causes problems here is that $\Theta_k^n(m,\mathbf{i},\mathbf{s})$ depends on the random variables $\Delta_{\mathbf{i}}^n X(m)$ and $\Delta X(m)_{\mathbf{s}}$ and we therefore cannot directly apply Lemma \ref{lem2}. To overcome this problem we introduce the function $f_y \in \mathcal{C}^{d+1}(\mathbb{R}^{d-1})$ defined by
\[
f_y (\mathbf{x}) = |y|^p L(x_1,\dots,x_{k-1},y,x_{k+1},\dots,x_d).
\]
Then we have
\[
\Theta_k^n(m,\mathbf{i},\mathbf{s})=\sqrt{n} \Big( \sum_{j=1}^{\left\lfloor nt\right\rfloor} f_{\Delta_j^n X(m)}(\Delta_{\mathbf{i}}^n X(m),\Delta X(m)_{\mathbf{s}}) - \sum_{u \leq \frac{\left\lfloor nt\right\rfloor}{n}} f_{\Delta X(m)_u} (\Delta_{\mathbf{i}}^n X(m),\Delta X(m)_{\mathbf{s}})\Big).
\]
Now we replace the function $f_y$ according to Lemma \ref{lem3} by
\[
f_y(\mathbf{x})=f_y(\mathbf{0})+\sum_{k=1}^d \sum_{1 \leq i_1 < \dots < i_k \leq d} \int_0^{x_{i_1}} \cdots \int_0^{x_{i_k}} \partial_{i_k} \cdots \partial_{i_1} f_y(g_{i_1,\dots,i_k}(s_1,\dots,s_k)) ds_k \dots ds_1.
\]
Since all of the appearing terms have the same structure we will exemplarily treat one of them:
\begin{align*}
&\sqrt{n} \Big| \sum_{j=1}^{\left\lfloor nt\right\rfloor} \int_0^{\Delta X_{i_1}^n(m)}|\Delta_j^n X(m)|^p \partial_1 L(s_1,0,\dots,0,\Delta_j^n X(m),0,\dots,0) ds_1\\
  & \quad \quad \quad \quad \quad \quad - \sum_{u \leq \frac{\left\lfloor nt\right\rfloor}{n}}  \int_0^{\Delta X_{i_1}^n(m)} |\Delta X(m)_u|^p \partial_1L(s_1,0,\dots,0,\Delta X(m)_u,0,\dots,0) ds_1 \Big| \\
	&\leq \int_{-\frac{2}{m}}^{\frac{2}{m}}\sqrt{n}\Big| \sum_{j=1}^{\left\lfloor nt\right\rfloor} |\Delta_j^n X(m)|^p \partial_1 L(s_1,0,\dots,0,\Delta_j^n X(m),0,\dots,0) \\
	&  \quad \quad \quad \quad \quad \quad -\sum_{u \leq \frac{\left\lfloor nt\right\rfloor}{n}}  |\Delta X(m)_u|^p \partial_1L(s_1,0,\dots,0,\Delta X(m)_u,0,\dots,0) \Big| ds_1.
\end{align*}
This means that we can bound $|\Theta_k^n(m,\mathbf{i},\mathbf{s})|$ from above by some random variable $\tilde{\Theta}_k^n(m)$ which is independent of $\mathbf{i}$ and $\mathbf{s}$ and which fulfills 
\begin{align}\label{neg}
\lim_{m \to \infty} \limsup_{n\to \infty} \mathbb{E} \big[\mathbbm{1}_{\Omega_n(m)}  \tilde{\Theta}_k^n(m) \big] =0
\end{align}
by Lemma \ref{lem2}. Using the previous estimates we have
\[
|\Psi_2^n(m,k)| \leq \tilde{\Theta}_k^n(m) \Bigg(\sum_{j=1}^{\left\lfloor nt\right\rfloor} |\Delta_{j}^n X(m)|^p \Bigg)^{k-1} \Bigg(\sum_{u \leq \frac{\left\lfloor nt\right\rfloor}{n} } |\Delta X(m)_u|^p \Bigg)^{d-k}.
\]
Clearly the latter two terms are bounded in probability and therefore \eqref{neg} yields
\[
\lim_{m \to \infty} \limsup_{n \to \infty} \mathbb{P}(\mathbbm{1}_{\Omega_n(m)} |\Psi_2^n(m)| > \eta) =0,
\]
which proves \eqref{as4}.

The last thing we have to show is  
\begin{align}
\sqrt{n} \Big(V(H,X,l)_t - V(H,X,l)_{\frac{\left\lfloor nt\right\rfloor}{n}} \Big) \toop 0,
\end{align}
e.g.\ in the case $l=d$. From \cite[p. 133]{JP} we know that in the case $d=1$ we have
\begin{align}\label{rest}
\sqrt{n}\sum_{\frac{\left\lfloor nt\right\rfloor}{n} < s_k \leq t} |\Delta X_{s_k}|^p \toop 0.
\end{align}
The general case follows by using the decomposition
\begin{align*}
&\Big|\sqrt{n} \Big(\sum_{s_1,\dots,s_d \leq t} H(\Delta X_{s_1},\dots,\Delta X_{s_d})    - \sum_{s_1,\dots,s_d \leq \frac{\left\lfloor nt\right\rfloor}{n}} H(\Delta X_{s_1},\dots,\Delta X_{s_d})   \Big)\Big| \\
=&\Big|\sqrt{n}\sum_{k=1}^d \Bigg(\sum_{s_1,\dots,s_{k-1}\leq t} \sum_{s_{k+1},\dots,s_{d}\leq \frac{\left\lfloor nt\right\rfloor}{n}} \sum_{\frac{\left\lfloor nt\right\rfloor}{n} < s_k \leq t}H(\Delta X_{s_1},\dots,\Delta X_{s_d}) \Bigg) \Big|\\
\leq &  \sum_{k=1}^d \sum_{s_1,\dots,s_{k-1}\leq t} \sum_{s_{k+1},\dots,s_{d}\leq \frac{\left\lfloor nt\right\rfloor}{n}} |\Delta X_{s_1} \cdots \Delta X_{s_{k-1}} \Delta X_{s_{k+1}} \cdots \Delta X_{s_d}|^p\Big( \sqrt{n}\sum_{\frac{\left\lfloor nt\right\rfloor}{n} < s_k \leq t} |\Delta X_{s_k}|^p \Big) \toop 0.\end{align*} 
Hence the proof of Theorem \ref{stabvj} is complete. 
\end{proof}

As a possible application of the theory let us indicate how one could obtain information about the jump sizes of the process X. For instance, we will sketch a procedure in order to decide whether all sizes lie on a grid $\alpha + \beta \mathbb{Z}$ for a given $\beta$, but unknown $\alpha$.

We start the discussion with a slightly more general situation and consider sets $M\subset \mathbb{R}$, for which we can find a non-negative function $g_M :\mathbb{R} \to \mathbb{R}$ that fulfills $g_M(x)=0$ if and only if $x \in M$, and such that the function $L_M:\mathbb{R}^2 \to\mathbb{R}$ defined by $L_M(x,y)=g_M(x-y)$ lies in $\mathcal{A}_2(2)$. Then our theory shows for $H_M =|x|^{p_1} |y|^{p_2}L_M(x,y)$ that the limit $V(H_M, X, 2)$ vanishes if and only if there is $\alpha \in \mathbb{R}$ such that all (non-zero) jump sizes lie in the set $\alpha + M$. In other words, our theory enables us to construct a test whether such an $\alpha$ exists. As a more explicit example we consider the following one.      

\begin{example} \label{ex1}
For a given $\beta \in \mathbb{R}$ consider the function $H(x,y)=|x|^4|y|^4 \sin^2 \Big(\frac{\pi(x-y)}{\beta} \Big)$. Then we have
\[
\sum_{i,j=1}^{\left\lfloor nt\right\rfloor} H(\Delta_i^n X, \Delta_j^n X) \toop L(\beta):=\sum_{s_1,s_2 \leq t} |\Delta X_{s_1}|^4|\Delta X_{s_2}|^4 \sin^2\Big(\frac{\pi(\Delta X_{s_1}-\Delta X_{s_2})}{\beta} \Big).
\]
It holds that $L(\beta)=0$ if and only if there exists an $\alpha \in \mathbb{R}$ such that 
\[
\Delta X_s \in \alpha + \beta \mathbb{Z} \quad \text{for all} \ \ s\leq t \ \ \text{with} \ \ \Delta X_s \neq 0. 
\]
\end{example}
To formally test whether there exists an $\alpha\in \mathbb{R}$ such that all jump sizes lie in the set $\alpha + \beta \mathbb{Z}$ one would of course need to derive estimators for the conditional variance of the limit in Theorem \ref{stabvj}.
\end{subsection} 

\end{section}

\begin{section}{The mixed case}\label{tmc}
In this section we will present an asymptotic theory for statistics of the form
\begin{align}\label{mst}
Y_t^n(H,X,l)=\frac{1}{n^l} \sum_{\textbf{i}\in \mathcal{B}_t^n (l)}\sum_{\textbf{j}\in \mathcal{B}_t^n (d-l)} H(\sqrt{n} \Delta_{\mathbf{i}}^n X, \Delta_{\mathbf{j}}^n X),
\end{align}
where $H$ behaves like $|x_1|^p \cdots |x_l|^p$ for $p<2$ in the first $l$ arguments and like $|x_{l+1}|^q \cdots |x_{d}|^q$ for $q>2$ in the last $d-l$ arguments. As already mentioned in the introduction, powers smaller than two and powers larger than two lead to completely different limits. This makes the treatment of $Y_t^n(H,X,l)$ for general $l$ way more complicated than in section \ref{tjc} where only large powers appear. In fact, we use the results from section \ref{tjc} and combine them with quite general results concerning the case $l=d$, which we derive in the appendix. The limits turn out to be a mixture of what one obtains in both settings separately. In the central limit theorem we get a conditionally Gaussian limit, where the conditional variance is a complicated functional of both the volatility $\sigma$ and the jumps of $X$. 
    
\begin{subsection}{Law of large numbers}
We will prove a law of large numbers for the quantity given in \eqref{mst}. As already mentioned we will need a combination of the methods from section \ref{tjc} and methods for U-statistics of continuous It\^o-semimartingales that were developed in \cite{PSZ}. We obtain the following result.
\begin{theorem}\label{lln2}
Let $H(\mathbf{x}, \mathbf{y})=|x_1|^{p_1} \cdots |x_l|^{p_l} |y_1|^{q_1} \ \cdots |y_{d-l}|^{q_{d-l}} L(\mathbf{x},\mathbf{y})$ with $p_1, \dots, p_l <2$ and $q_{1},\dots, q_{d-l}>2$ for some $0\leq l \leq d$. The function $L:\mathbb{R}^d \to \mathbb{R}$  is assumed to be continuous with $|L(\mathbf{x},\mathbf{y})| \leq u(\mathbf{y})$ for some $u \in \mathcal{C}(\mathbb{R}^{d-l})$. Then, for fixed $t>0$
\begin{align*}
Y_t^n(H,X,l) \toop Y_t(H,X,l)=\sum_{\mathbf{s} \in [0,t]^{d-l}} \int_{[0,t]^l} \rho_H(\sigma_{\mathbf{u}} ,\Delta X_{\mathbf{s}}) d\mathbf{u} ,
\end{align*}
where
\[
\rho_H(\mathbf{x} ,\mathbf{y})= \mathbb{E}[H(x_1 U_1, \dots, x_l U_l, \mathbf{y})]
\]
for arbitrary $\mathbf{x}\in \mathbb{R}^{l},\mathbf{y}\in \mathbb{R}^{d-l}$, and with $(U_1, \dots,U_l)\sim \mathcal{N}(\mathbf{0}, \operatorname{id}_l)$. 
\end{theorem}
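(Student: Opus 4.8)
\emph{Overall strategy.} The plan is to separate the two asymptotic regimes hidden in $Y_t^n(H,X,l)$: the first $l$ arguments are scaled by $\sqrt{n}$ and carry exponents $p_k<2$, so only the continuous part of $X$ survives there (as in \eqref{cest}), whereas the last $d-l$ arguments are unscaled and carry exponents $q_k>2$, so only the jumps survive there (as in Theorem \ref{lln}). By a standard localization argument I may assume $b,\sigma,X$ bounded and $\int\Gamma(z)^2\lambda(dz)\leq A$, since convergence in probability localizes. The bound $|L(\mathbf{x},\mathbf{y})|\leq u(\mathbf{y})$ is what lets me decouple the two blocks in every error estimate, because it gives $|H(\sqrt{n}\Delta_{\mathbf{i}}^n X,\Delta_{\mathbf{j}}^n X)|\leq|\sqrt{n}\Delta_{\mathbf{i}}^n X|^{\mathbf{p}}\,|\Delta_{\mathbf{j}}^n X|^{\mathbf{q}}\,u(\Delta_{\mathbf{j}}^n X)$; moreover, since $\mathcal{B}_t^n(l)$ is a full product index set, one has the exact identity
\[
\frac{1}{n^l}\sum_{\mathbf{i}\in\mathcal{B}_t^n(l)}|\sqrt{n}\Delta_{\mathbf{i}}^n X|^{\mathbf{p}}=\prod_{k=1}^l\Big(\frac{1}{n}\sum_{i=1}^{\left\lfloor nt\right\rfloor}|\sqrt{n}\Delta_i^n X|^{p_k}\Big),
\]
which is bounded in probability by \eqref{cest}.

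\emph{Step A: reduce the unscaled block to the jumps of $X$.} Fixing the truncation level $m$, I write $X=X(m)+X'(m)$ with $X'(m)$ collecting the finitely many jumps exceeding $1/m$, and work on $\Omega_n(m)$. Arguing as in the proof of Theorem \ref{lln}, a modulus-of-continuity estimate for $L$ lets me replace, on each interval carrying a big jump, the increment $\Delta_j^n X$ by the jump $\Delta X_s$ it contains, while the moment bound of Lemma \ref{lem1} shows that the $\mathbf{j}$-intervals \emph{without} a big jump contribute at most the small-jump sum $\sum_{s\leq t}|\Delta X(m)_s|^{q_k}$ in each unscaled slot, which tends to $0$ as $m\to\infty$ because $q_k>2$ forces $\sum_{s\leq t}|\Delta X_s|^{q_k}<\infty$. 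The extra $\mathbf{i}$-sum does not interfere: by the factorisation above and the product identity, the scaled factor $\frac{1}{n^l}\sum_{\mathbf{i}}|\sqrt{n}\Delta_{\mathbf{i}}^n X|^{\mathbf{p}}$ stays bounded in probability. This yields, on $\Omega_n(m)$,
\[
Y_t^n(H,X,l)=\frac{1}{n^l}\sum_{\mathbf{i}\in\mathcal{B}_t^n(l)}\sum_{\mathbf{s}}H\bigl(\sqrt{n}\Delta_{\mathbf{i}}^n X,\Delta X_{\mathbf{s}}\bigr)+r_n(m),
\]
where $\mathbf{s}$ runs over the big-jump times of $X$ in the last $d-l$ slots and $\lim_{m\to\infty}\limsup_{n\to\infty}\PP(|r_n(m)|>\eta)=0$ for every $\eta>0$.

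\emph{Step B: evaluate the scaled block for a fixed jump configuration.} Since there are finitely many big jumps, the outer sum over $\mathbf{s}$ is finite, so it suffices to analyse, for each fixed value $\mathbf{y}=\Delta X_{\mathbf{s}}$, the continuous-regime V-statistic $\frac{1}{n^l}\sum_{\mathbf{i}\in\mathcal{B}_t^n(l)}H(\sqrt{n}\Delta_{\mathbf{i}}^n X,\mathbf{y})$ with kernel $\mathbf{x}\mapsto H(\mathbf{x},\mathbf{y})=|x_1|^{p_1}\cdots|x_l|^{p_l}L(\mathbf{x},\mathbf{y})$. Because $p_k<2$, the finitely many intervals carrying a big jump contribute only $O_{\PP}(n^{p_k/2-1})$, so I may replace $\sqrt{n}\Delta_{\mathbf{i}}^n X$ by $\sqrt{n}\Delta_{\mathbf{i}}^n X(m)$; the law of large numbers for U-statistics of continuous It\^o semimartingales from \cite{PSZ}, in the extended form established in the appendix, then gives convergence in probability to $\int_{[0,t]^l}\E[H(\sigma_{u_1}U_1,\dots,\sigma_{u_l}U_l,\mathbf{y})]\,d\mathbf{u}=\int_{[0,t]^l}\rho_H(\sigma_{\mathbf{u}},\mathbf{y})\,d\mathbf{u}$. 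Summing over the big jumps $\mathbf{s}$ gives the truncated analogue of $Y_t(H,X,l)$.

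\emph{Step C and the main obstacle.} Finally I let $m\to\infty$: the truncated limit converges to $Y_t(H,X,l)$ because $|\rho_H(\sigma_{\mathbf{u}},\mathbf{y})|\leq K|\mathbf{y}|^{\mathbf{q}}u(\mathbf{y})$ and $\sum_{s\leq t}|\Delta X_s|^{q_k}<\infty$ for $q_k>2$, so the series over all jumps converges absolutely and the small-jump tail is controlled uniformly in $n$. Assembling the three reductions through the $\lim_m\limsup_n$ scheme completes the proof. I expect the main obstacle to be Step B: transferring the continuous U-statistic law of large numbers of \cite{PSZ} to the present V-statistic with the coupled kernel $H(\cdot,\mathbf{y})$, and simultaneously verifying that the jump part of $X$ leaves the scaled block asymptotically untouched. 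Everything else is careful bookkeeping of error terms of the type already met in the proof of Theorem \ref{lln}.
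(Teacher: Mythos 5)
Your overall architecture (continuous regime in the scaled block, jump regime in the unscaled block) is the right one, and your Steps A and C are workable in principle: the truncation into $X(m)+X'(m)$ with a $\lim_m\limsup_n$ scheme mirrors what the paper does in its central limit theorems (though note that controlling the no-big-jump intervals requires the law of large numbers \eqref{jest} applied to $X(m)$, not just the moment bound of Lemma \ref{lem1}). The genuine gap is in Step B, which you flag as the main obstacle but do not resolve. After Step A, the unscaled arguments are the random variables $\Delta X_{\mathbf{s}}$, which are in general dependent on the increments $\Delta_{\mathbf{i}}^n X$ appearing in the scaled block (they come from the same process). Convergence in probability of $\frac{1}{n^l}\sum_{\mathbf{i}}H(\sqrt{n}\Delta_{\mathbf{i}}^n X,\mathbf{y})$ for each \emph{fixed, deterministic} $\mathbf{y}$ --- which is all that \cite{PSZ} provides --- does not allow you to substitute the random point $\mathbf{y}=\Delta X_{\mathbf{s}}$: pointwise-in-$\mathbf{y}$ convergence in probability is not stable under random evaluation, and no conditioning argument is available since the jumps are not independent of the continuous part. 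What is needed is convergence \emph{uniformly} in $\mathbf{y}$ over a compact set, and establishing exactly this uniformity is the core of the paper's proof (assertion \eqref{glmy}): it is obtained by a modulus-of-continuity decomposition of $g(\mathbf{x},\mathbf{y})=|x_1|^{p_1}\cdots|x_l|^{p_l}L(\mathbf{x},\mathbf{y})$ in $\mathbf{y}$, with the tail term controlled by choosing $\epsilon>0$ with $\max_i p_i+\epsilon<2$ and invoking Jacod's power-variation law of large numbers. Your appeal to an ``extended form established in the appendix'' does not fill this hole: the uniform statements in the appendix (Propositions \ref{ulln} and \ref{ustab}) are themselves proved by the method of \eqref{glmy}, so you would be assuming precisely what has to be shown.

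A second, smaller defect in Step B: after discarding the finitely many big-jump intervals you apply the law of large numbers of \cite{PSZ} to $X(m)$, but $X(m)$ is not a continuous It\^o semimartingale --- it retains all jumps of size up to $1/m$. To invoke \cite{PSZ} one must eliminate the jumps entirely, i.e.\ pass to $X^c$, which requires an elimination-of-jumps estimate of the type in \cite[section 3.4.3]{JP}, again uniformly in $\mathbf{y}$. For comparison, the paper avoids your Step A altogether: it proves the uniform statement \eqref{glmy}, substitutes the random points $\mathbf{y}=\Delta_{\mathbf{j}}^n X$, and then handles the resulting statistic $\sum_{\mathbf{j}}\int_{[0,t]^l}\rho_H(\sigma_{\mathbf{u}},\Delta_{\mathbf{j}}^n X)\,d\mathbf{u}$ by its jump-case Theorem \ref{lln} (via a second uniform assertion \eqref{glmx}) combined with the continuous mapping theorem. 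In both routes the uniformity in the unscaled variable is what carries the argument, and it is exactly the ingredient your proposal is missing.
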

\begin{remark}
In the special case $l=0$ we obtain the result from Theorem \ref{lln}. For $l=d$ we basically get the same limit as the case of U-statistics for continuous semimartingales $X$ (see Theorem 3.3 in \cite{PSZ}).  
\end{remark}
\begin{proof}
By the standard localization procedure we may assume that $X$ and $\sigma$ are bounded by a constant $A$. We will start by proving the following two assertions:
\begin{align}
\sup_{\mathbf{y} \in [-2A,2A]^{d-l}}\Big|\frac{1}{n^l} \sum_{\textbf{i}\in \mathcal{B}_t^n (l)} g(\sqrt{n} \Delta_{\mathbf{i}}^n X, \mathbf{y}) - \int_{[0,t]^l} \rho_g(\sigma_{\mathbf{u}} ,\mathbf{y}) d\mathbf{u} \Big| \toop 0, \label{glmy}\\
\sup_{\mathbf{x} \in [-A,A]^l}\Big| \sum_{\textbf{j}\in \mathcal{B}_t^n (d-l)} \rho_{H}(\mathbf{x}, \Delta_{\mathbf{j}}^n X) - \sum_{\mathbf{s} \in [0,t]^{d-l}} \rho_H(\mathbf{x} ,\Delta X_{\mathbf{s}} ) \Big|  \toop 0 \label{glmx},
\end{align}
where $g(\mathbf{x},\mathbf{y})=|x_1|^{p_1}\cdots |x_l|^{p_l}  L(\mathbf{x},\mathbf{y})$. The proofs mainly rely on the following decomposition for any real-valued function $f$ defined on some compact set $C \subset \mathbb{R}^k$: If $C' \subset C$ is finite and for any $\mathbf{x}\in C$ there exists $\mathbf{y}\in C'$ such that $\left\|\mathbf{x}-\mathbf{y}\right\|\leq \delta$ for some $\delta >0$, then
\[
\sup_{\mathbf{x}\in C} |f(\mathbf{x})| \leq \max_{\mathbf{x}\in C'} |f(\mathbf{x})|  + \sup_{\mathbf{x},\mathbf{y}\in C \atop\left\|\mathbf{x}-\mathbf{y}\right\|\leq \delta} |f(\mathbf{x}) - f(\mathbf{y})|.
\]
Now denote the continuous part of the semimartingale $X$ by $X^c$. For the proof of \eqref{glmy} we first observe that for fixed $\mathbf{y} \in \mathbb{R}^{d-l}$ we have
\[
\frac{1}{n^l} \sum_{\textbf{i}\in \mathcal{B}_t^n (l)} \big(g(\sqrt{n} \Delta_{\mathbf{i}}^n X, \mathbf{y})- g(\sqrt{n} \Delta_{\mathbf{i}}^n X^c, \mathbf{y})\big) \toop 0. 
\]
We will not give a detailed proof of this "elimination of jumps" step since it follows essentially from the corresponding known case $l=1$ (see  \cite[section 3.4.3]{JP}) in combination with the methods we use in the proof of \eqref{supdif}. Using the results of the asymptotic theory for U-statistics of continuous It\^o semimartingales given in \cite[Prop. 3.2]{PSZ} we further obtain (still for fixed $\mathbf{y}$)
\[
\frac{1}{n^l} \sum_{\textbf{i}\in \mathcal{B}_t^n (l)}  g(\sqrt{n} \Delta_{\mathbf{i}}^n X^c, \mathbf{y}) \toop \int_{[0,t]^l} \rho_g(\sigma_{\mathbf{u}} ,\mathbf{y}) d\mathbf{u}.
\]
To complete the proof of \eqref{glmy} we will show
\begin{align}\label{supdif}
\xi^n(m):=\sup_{\mathbf{x},\mathbf{y}\in [-2A,2A]^{d-l} \atop\left\|\mathbf{x}-\mathbf{y}\right\|\leq \frac{1}{m}} \frac{1}{n^l} \Big|\sum_{\textbf{i}\in \mathcal{B}_t^n (l)} \Big( g(\sqrt{n} \Delta_{\mathbf{i}}^n X, \mathbf{x}) - g(\sqrt{n} \Delta_{\mathbf{i}}^n X, \mathbf{y})\Big)\Big| \toop 0
\end{align}
if we first let $n$ and then $m$ go to infinity. The corresponding convergence of the integral term in \eqref{glmy} is easy and will therefore be omitted. 

Let $\epsilon >0$ be fixed such that $\max(p_1,\dots, p_l)+\epsilon <2$, and for all $\alpha >0$ and $k \in \mathbb{N}$ define the modulus of continuity
\[
\delta_k(\alpha) := \sup \left\{|g(\mathbf{u}, \mathbf{x})-g(\mathbf{u},\mathbf{y})| \Big| \left\|\mathbf{u}\right\| \leq k, \left\|(\mathbf{x},\mathbf{y})\right\|\leq 2A, \left\|\mathbf{x}-\mathbf{y}\right\| \leq \alpha \right\}.
\]  
Then we have
\begin{align*}
\xi^n (m) &\leq K \Bigg( \delta_k (m^{-1}) + \sup_{\mathbf{x},\mathbf{y}\in [-2A,2A]^{d-l} \atop\left\|\mathbf{x}-\mathbf{y}\right\|\leq \frac{1}{m}} \frac{1}{n^l} \sum_{\textbf{i}\in \mathcal{B}_t^n (l)} \mathbbm{1}_{\left\{\left\|\sqrt{n}\Delta_{\mathbf{i}}^n X \right\| \geq k \right\}} \Big(| g(\sqrt{n} \Delta_{\mathbf{i}}^n X, \mathbf{x})| + |g(\sqrt{n} \Delta_{\mathbf{i}}^n X, \mathbf{y})|\Big) \Bigg) \\
&\leq K \Big( \delta_k (m^{-1}) + \frac{1}{n^l} \sum_{\textbf{i}\in \mathcal{B}_t^n (l)} |\sqrt{n}\Delta_{i_1}^n X|^{p_1}\cdots  |\sqrt{n}\Delta_{i_l}^n X|^{p_l} \frac{|\sqrt{n}\Delta_{i_1}^nX|^{\epsilon}+\dots +|\sqrt{n}\Delta_{i_l}^nX|^{\epsilon} }{k^{\epsilon}} \Big) \\
& \toop K \Big( \delta_k (m^{-1}) + \frac{1}{k^{\epsilon}}\sum_{j=1}^l \prod_{i=1}^l \int_0^t m_{p_i+\delta_{ij}\epsilon} |\sigma_s|^{p_i + \delta_{ij} \epsilon} ds  \Big) \quad \text{as} \quad n \to \infty, 
\end{align*}
where $m_p$ is the $p$-th absolute moment of the standard normal distribution and $\delta_{ij}$ is the Kronecker delta (for a proof of the last convergence see \cite[Theorem 2.4]{J2}) . The latter expression obviously converges to $0$ if we let $m\to \infty$ and then $k \to \infty$, which completes the proof of $\eqref{glmy}$. 

We will prove \eqref{glmx} in a similar way. Since $\rho_{H}(\mathbf{x}, \mathbf{y})/ |y_1 \cdot \ldots \cdot y_{d-l}|^2\to 0$ as $\mathbf{y} \to 0$ , Theorem \ref{lln} implies 
\[
\sum_{\textbf{j}\in \mathcal{B}_t^n (d-l)} \rho_{H}(\mathbf{x}, \Delta_{\mathbf{j}}^n X) \toop \sum_{\mathbf{s} \in [0,t]^{d-l}} \rho_H(\mathbf{x} ,\Delta X_{\mathbf{s}} ),
\]
i.e.\ pointwise convergence for fixed $\mathbf{x} \in [-A,A]^l$. Moreover,
\begin{align*}
\sup_{\mathbf{x},\mathbf{y}\in [-A,A]^{l} \atop\left\|\mathbf{x}-\mathbf{y}\right\| \leq \frac{1}{m}} \sum_{\textbf{j}\in \mathcal{B}_t^n (d-l)}& \Big|\rho_{H}(\mathbf{x}, \Delta_{\mathbf{j}}^n X)-\rho_{H}(\mathbf{y}, \Delta_{\mathbf{j}}^n X)  \Big|  \\
&\leq \Bigg( \prod_{i=1}^{d-l} \sum_{j=1}^{\left\lfloor nt\right\rfloor} |\Delta_{j}^n X|^{q_i} \Bigg) \sup_{\mathbf{x},\mathbf{y}\in [-A,A]^{l} \atop\left\|\mathbf{x}-\mathbf{y}\right\| \leq \frac{1}{m}} \sup_{\left\|\mathbf{z}\right\| \leq 2A} \Big|\rho_{g}(\mathbf{x}, \mathbf{z}) -\rho_{g}(\mathbf{y}, \mathbf{z})  \Big|.  
\end{align*}
The term in brackets converges in probability to some finite limit by Theorem \ref{lln} as $n\to \infty$, and the supremum goes to $0$ as $m \to \infty$ because $\rho_g$ is continuous. By similar arguments it follows that
\begin{align*}
\sup_{\mathbf{x},\mathbf{y}\in [-A,A]^{l} \atop\left\|\mathbf{x}-\mathbf{y}\right\| \leq \frac{1}{m}} \sum_{\mathbf{s} \in [0,t]^{d-l}}& \Big|\rho_{H}(\mathbf{x}, \Delta X_{\mathbf{s}})-\rho_{H}(\mathbf{y}, \Delta X_{\mathbf{s}})  \Big| \toop 0,
\end{align*}
if we let $m$ go to infinity. Therefore \eqref{glmx} holds. 

We will now finish the proof of Theorem \ref{lln2} in two steps. First we have
\begin{align*}
&\Big|\frac{1}{n^l} \sum_{\textbf{i}\in \mathcal{B}_t^n (l)}\sum_{\textbf{j}\in \mathcal{B}_t^n (d-l)} H(\sqrt{n} \Delta_{\mathbf{i}}^n X, \Delta_{\mathbf{j}}^n X) - \sum_{\textbf{j}\in \mathcal{B}_t^n (d-l)}  \int_{[0,t]^l} \rho_H(\sigma_{\mathbf{u}} ,\Delta_{\mathbf{j}}^n X) d\mathbf{u} \Big| \\
\leq & \Bigg( \prod_{i=1}^{d-l} \sum_{j=1 }^{\left\lfloor nt\right\rfloor} |\Delta_{j}^n X|^{q_i} \Bigg)\sup_{\mathbf{y} \in [-2A,2A]^{d-l}}\Big|\frac{1}{n^l} \sum_{\textbf{i}\in \mathcal{B}_t^n (l)} g(\sqrt{n} \Delta_{\mathbf{i}}^n X, \mathbf{y}) - \int_{[0,t]^l} \rho_g(\sigma_{\mathbf{u}} ,\mathbf{y}) d\mathbf{u} \Big| \toop 0
\end{align*}
by \eqref{glmy}. From \eqref{glmx} we obtain the functional convergence
\[
\begin{pmatrix}
   (\sigma_s)_{0\leq s \leq t}\\
   \sum_{\textbf{j}\in \mathcal{B}_t^n (d-l)} \rho_{H}(\cdot, \Delta_{\mathbf{j}}^n X) 
\end{pmatrix} 
\toop
\begin{pmatrix}
   (\sigma_s)_{0\leq s \leq t}\\
   \sum_{\mathbf{s} \in [0,t]^{d-l}} \rho_H(\cdot ,\Delta X_{\mathbf{s}} ) 
\end{pmatrix} 
\]
in the space $\mathcal{D}([0,t]) \times \mathcal{C}([-A,A]^l)$. Define the mapping
\[
\Phi: \mathcal{D}([0,t]) \times \mathcal{C}(\mathbb{R}^l) \to \mathbb{R}, \quad (f,g) \longmapsto \int_{[0,t]^l} g(f(u_1), \dots, f(u_l)) d\mathbf{u}.
\]
This mapping is continuous and therefore we obtain by the continuous mapping theorem
\[
\sum_{\textbf{j}\in \mathcal{B}_t^n (d-l)}  \int_{[0,t]^l} \rho_H(\sigma_{\mathbf{u}} ,\Delta_{\mathbf{j}}^n X) d\mathbf{u} \toop \sum_{\mathbf{s} \in [0,t]^{d-l}} \int_{[0,t]^l} \rho_H(\sigma_{\mathbf{u}} ,\Delta X_{\mathbf{s}}) d\mathbf{u},
\]
which ends the proof. 
\end{proof}
\end{subsection}

\begin{subsection}{Central limit theorem}
In the mixed case we need some additional assumptions on the process $X$. First we assume that the volatility process $\sigma_t$ is not vanishing, i.e.\ $\sigma_t \neq 0$ for all $t\in [0,T]$, and that $\sigma$ is itself a continuous It\^o-semimartingale of the form
\[
\sigma_t=\sigma_0 + \int_0^{t} \tilde{b}_s ds + \int_0^t \tilde{\sigma}_s dW_s + \int_0^t \tilde{v}_s dV_s,
\]
where $\tilde{b}_s,\tilde{\sigma}_s$, and $\tilde{v}_s$ are c\`adl\`ag processes and $V_t$ is a Brownian motion independent of $W$. As a boundedness condition on the jumps we further require that there is a sequence $\Gamma_k: \mathbb{R} \to \mathbb{R}$ of functions and a localizing sequence $(\tau_k)_{k \in \mathbb{N}}$ of stopping times such that $|\delta(\omega,t,z)| \wedge 1\leq \Gamma_k(z)$ for all $\omega,t$ with $t \leq \tau_k(\omega)$ and 
\[
\int \Gamma_k(z)^{r}\lambda(dz) < \infty
\]  
for some $0<r<1$. In particular, the jumps of the process $X$ are then absolutely summable.

The central limit theorem will again be stable under stopping, so we can assume without loss of generality that there is a function $\Gamma :\mathbb{R} \to \mathbb{R}$ and a constant $A$ such that $\delta(\omega, t,z) \leq \Gamma(z)$ and
\[
\sup\{|X_t(\omega)|, |b_t(\omega)|, |\sigma_t(\omega)|, |\sigma_t^{-1}(\omega)|, |\tilde{b}_t(\omega)|, |\tilde{\sigma}_t(\omega)|, |\tilde{v}_t(\omega)|\}\leq A,
\]
uniformly in $(\omega,t)$. We may further assume $\Gamma(z) \leq A$ for all $z\in \mathbb{R}$ and
\[
\int \Gamma(z)^r \lambda(dz) < \infty.
\]
Before we state the central limit theorem for $\sqrt{n}(Y_t^n(H,X,l)- Y_t(H,X,l))$ we give a few auxiliary results. A typical procedure in proofs of results such as Theorem \ref{cltmc} is to replace the scaled increments of $X$ (for us: the terms in the first $l$ arguments) by the first order approximation $\alpha_i^n := \sqrt{n} \sigma_{\frac{i-1}{n}} \Delta_i^n W$ of the continuous part of $X$. In combination with other simplifications, this procedure will lead to asymptotic equivalence of $\sqrt{n}(Y_t^n(H,X,l)- Y_t(H,X,l))$  with
\[
\sum_{\mathbf{q}: S_{\mathbf{q}}\leq t}\Bigg(\frac{1}{n^l} \sum_{\mathbf{i} \in \mathcal{B}_t^n (l)}\sum_{k=l+1}^d \partial_k H\big(\alpha_{\mathbf{i}}^n, \Delta X_{S_{\mathbf{q}}}\big)R(n,q_k)+ \sqrt{n}\Big( \frac{1}{n^l} \sum_{\mathbf{i} \in \mathcal{B}_t^n (l) } H\big(\alpha_{\mathbf{i}}^n,\Delta X_{S_{\mathbf{q}}} \big) - \int_{[0,t]^l} \rho_H (\sigma_{\mathbf{s}},  \Delta X_{S_{\mathbf{q}}}) d\mathbf{s}\Big) \Bigg).
\]
For now, consider only the term in brackets, with $R(n,q_k) \equiv 1$ for simplicity. We can see that if $\Delta X_{S_{\mathbf{q}}}$ was just a deterministic number, we could derive the limit by using the asymptotic theory for U-statistics developed in \cite{PSZ}. For the first summand we would need a law of large numbers and for the second one a central limit theorem. Since $\Delta X_{S_{\mathbf{q}}}$ is of course in general not deterministic, the above decomposition indicates that it might be useful to have the theorems for U-statistics uniformly in some additional variables. 
As a first result in that direction we have the following claim.
\begin{proposition}\label{ulln}
Let $0 \leq l \leq d$ and $G: \mathbb{R}^l \times [-A,A]^{d-l} \to \mathbb{R}$ be a continuous function that is of polynomial growth in the first $l$ arguments, i.e.\ $|G(\mathbf{x},\mathbf{y})|\leq (1+\left\|\mathbf{x}\right\|^p)w(\mathbf{y})$ for some $p \geq 0$ and $w \in \mathcal{C}([-A,A]^{d-l})$. Then
\[
\mathbb{B}_t^n(G,\mathbf{x}):=\frac{1}{n^l} \sum_{\mathbf{i} \in \mathcal{B}_t^n (l) } G\big(\alpha_{\mathbf{i}}^n, \mathbf{y}\big) \toop \mathbb{B}_t(G,\mathbf{y}):= \int_{[0,t]^l} \rho_G (\sigma_{\mathbf{s}}, \mathbf{y}) d\mathbf{s}
\]
in the space $\mathcal{C}([-A,A]^{d-l})$, where 
\[
\rho_G (\mathbf{x},\mathbf{y}) := \mathbb{E}[G(x_1U_1,\ldots,x_l U_l, \mathbf{y})]
\]
for a standard normal variable $U=(U_1,\ldots, U_l)$. 
\end{proposition}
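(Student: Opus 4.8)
The plan is to prove convergence in $\mathcal{C}([-A,A]^{d-l})$ by combining pointwise convergence on a finite grid with an asymptotic equicontinuity estimate, in complete analogy with the argument used for \eqref{glmy} and \eqref{supdif}. Write $\Phi^n(\mathbf{y}):=\mathbb{B}_t^n(G,\mathbf{y})-\mathbb{B}_t(G,\mathbf{y})$. For $m\in\mathbb{N}$ pick a finite grid $C_m\subset[-A,A]^{d-l}$ of mesh $1/m$, so that the elementary decomposition
\[
\sup_{\mathbf{y}\in[-A,A]^{d-l}}|\Phi^n(\mathbf{y})| \leq \max_{\mathbf{y}\in C_m}|\Phi^n(\mathbf{y})| + \sup_{\|\mathbf{y}-\mathbf{y}'\|\leq 1/m}|\Phi^n(\mathbf{y})-\Phi^n(\mathbf{y}')|
\]
holds. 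It then suffices to show, first, pointwise convergence $\mathbb{B}_t^n(G,\mathbf{y})\toop\mathbb{B}_t(G,\mathbf{y})$ for fixed $\mathbf{y}$ (which disposes of the first term, $C_m$ being finite), and second, the asymptotic equicontinuity of the family.

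For the pointwise statement I would note that $\alpha_i^n=\sqrt{n}\sigma_{(i-1)/n}\Delta_i^n W$ is exactly the first order approximation of the continuous part of $X$, so the convergence $\mathbb{B}_t^n(G,\mathbf{y})\toop\int_{[0,t]^l}\rho_G(\sigma_{\mathbf{s}},\mathbf{y})\,d\mathbf{s}$ for fixed $\mathbf{y}$ is the law of large numbers for U-statistics of continuous It\^o semimartingales from \cite[Prop. 3.2]{PSZ}, applied to the kernel $\mathbf{x}\mapsto G(\mathbf{x},\mathbf{y})$, whose polynomial growth supplies the needed integrability. Continuity of the limit $\mathbf{y}\mapsto\mathbb{B}_t(G,\mathbf{y})$ follows from continuity of $\rho_G$ (itself a consequence of continuity of $G$ and finiteness of Gaussian moments) together with the domination $|\rho_G(\sigma_{\mathbf{s}},\mathbf{y})|\leq K\,w(\mathbf{y})$ and dominated convergence, which also controls the modulus of continuity of the deterministic part appearing in the second term above.

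The main work, and the main obstacle, is the equicontinuity of $\mathbb{B}_t^n$, since the first argument $\alpha_{\mathbf{i}}^n$ ranges over an unbounded set. As in the proof of \eqref{supdif}, I introduce for $k\in\mathbb{N}$ the modulus
\[
\delta_k(\alpha):=\sup\{|G(\mathbf{u},\mathbf{y})-G(\mathbf{u},\mathbf{y}')| : \|\mathbf{u}\|\leq k,\ \mathbf{y},\mathbf{y}'\in[-A,A]^{d-l},\ \|\mathbf{y}-\mathbf{y}'\|\leq\alpha\},
\]
which tends to $0$ as $\alpha\to0$ for each fixed $k$ by uniform continuity of $G$ on the compact set $[-k,k]^l\times[-A,A]^{d-l}$. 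Splitting each summand according to whether $\|\alpha_{\mathbf{i}}^n\|\leq k$ or not, and bounding $|G(\alpha_{\mathbf{i}}^n,\mathbf{y})-G(\alpha_{\mathbf{i}}^n,\mathbf{y}')|\leq 2(1+\|\alpha_{\mathbf{i}}^n\|^p)\|w\|$ on the second set via the growth hypothesis, yields
\[
\sup_{\|\mathbf{y}-\mathbf{y}'\|\leq 1/m}|\mathbb{B}_t^n(G,\mathbf{y})-\mathbb{B}_t^n(G,\mathbf{y}')| \leq t\,\delta_k(1/m) + \frac{2\|w\|}{n^l}\sum_{\mathbf{i}\in\mathcal{B}_t^n(l)}(1+\|\alpha_{\mathbf{i}}^n\|^p)\mathbbm{1}_{\{\|\alpha_{\mathbf{i}}^n\|>k\}}.
\]

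To finish I would bound the tail term. Since $\sigma$ is bounded and, conditionally on $\mathcal{F}_{(i-1)/n}$, each $\alpha_i^n$ is centered Gaussian with variance $\sigma_{(i-1)/n}^2\leq A^2$, a routine Gaussian moment computation (using $\mathbbm{1}_{\{\|\alpha_{\mathbf{i}}^n\|>k\}}\leq\sum_j\mathbbm{1}_{\{|\alpha_{i_j}^n|>k\}}$ and factorizing the per-coordinate sums) shows that the expectation of the tail term is at most a deterministic $\gamma(k)$ with $\gamma(k)\to0$ as $k\to\infty$, uniformly in $n$, because Gaussian tails dominate the polynomial weight $\|\cdot\|^p$. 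Hence, fixing $\eta>0$, I first choose $k$ large (so that Markov's inequality makes the tail term exceed $\eta/2$ with small probability, uniformly in $n$) and then $m$ large (so that $t\,\delta_k(1/m)<\eta/2$), giving
\[
\lim_{m\to\infty}\limsup_{n\to\infty}\PP\Big(\sup_{\|\mathbf{y}-\mathbf{y}'\|\leq 1/m}|\mathbb{B}_t^n(G,\mathbf{y})-\mathbb{B}_t^n(G,\mathbf{y}')|>\eta\Big)=0.
\]
Inserting the pointwise convergence and this equicontinuity into the displayed decomposition (letting $n\to\infty$, then $m\to\infty$) gives $\sup_{\mathbf{y}}|\Phi^n(\mathbf{y})|\toop0$, which is precisely the asserted convergence in $\mathcal{C}([-A,A]^{d-l})$.
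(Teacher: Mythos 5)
Your proof is correct and follows essentially the same route as the paper, which establishes this proposition by pointing to the argument for \eqref{glmy} (minus the jump-elimination step): a finite-grid decomposition, pointwise convergence from \cite[Prop.~3.2]{PSZ}, and an asymptotic equicontinuity bound built from a truncated modulus of continuity. The only (harmless) deviation is in the tail term, where you use conditional Gaussian moment/Markov bounds for $\alpha_i^n$ instead of the paper's $\epsilon$-power trick combined with the power-variation limit --- a simplification that is available precisely because the arguments here are the Gaussian approximations $\alpha_{\mathbf{i}}^n$ rather than increments of $X$.
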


\begin{proof} This result follows exactly in the same way as \eqref{glmy} without the elimination of jumps step in the beginning. 
\end{proof}

In addition to this functional law of large numbers we further need the associated functional central limit theorem for
\begin{align}\label{clte}
\mathbb{U}_t^n(G,\mathbf{y})=\sqrt{n}\Big( \frac{1}{n^l} \sum_{\mathbf{i} \in \mathcal{B}_t^n (l) } G\big(\alpha_{\mathbf{i}}^n, \mathbf{y}\big) - \int_{[0,t]^l} \rho_G (\sigma_{\mathbf{s}}, \mathbf{y}) d\mathbf{s}\Big),
\end{align}
In order to obtain a limit theorem we will need to show tightness and the convergence of the finite dimensional distributions. We will use that, for fixed $\mathbf{y}$, an asymptotic theory for \eqref{clte} is given in \cite[Prop. 4.3]{PSZ}, but under too strong assumptions on the function $G$ for our purpose. In particular, we weaken the assumption of differentiability of $G$ in the following proposition whose proof can be found in the appendix.

\begin{proposition}\label{ustab}  Let $0 \leq l \leq d$ and let $G: \mathbb{R}^d \to \mathbb{R}$ be a function that is even in the first $l$ arguments and can be written in the form $G(\mathbf{x},\mathbf{y})=|x_1|^{p_1}\cdots |x_l|^{p_l} L(\mathbf{x},\mathbf{y})$ for some function $L \in \mathcal{C}^{d+1}(\mathbb{R}^{d}) $ and constants $p_1,\dots,p_{l} \in \mathbb{R}$ with $0<p_1, \dots, p_l<1$. We further impose the following growth conditions: 
\begin{align}\label{growth}
&|L(\mathbf{x},\mathbf{y})|\leq u(\mathbf{y}),  \quad  \Big| \partial_{ii}^2 L(\mathbf{x},\mathbf{y})\Big| \leq (1+\left\|\mathbf{x}\right\|^{\beta_i})u(\mathbf{y}) \quad (1\leq i\leq d),\\
&\Big| \partial_{j_1} \cdots \partial_{j_k}L(\mathbf{x},\mathbf{y})\Big| \leq (1+\left\|\mathbf{x}\right\|^{\gamma_{j_1\dots j_k}})u(\mathbf{y}), \quad  (1\leq k \leq d;\ 1\leq j_1 < \dots < j_k \leq d)
\end{align}
for some constants $\beta_i, \gamma_{j_1\dots j_k} \geq 0$, and a function $u \in \mathcal{C}(\mathbb{R}^{d-l})$. The constants are assumed to fulfill $\gamma_j +p_i<1$ for $i\neq j$ and $i=1,\dots,l$, $j=1,\dots,d$. Then we have, for a fixed $t>0$
\begin{align}
(\mathbb{U}_t^n(G,\cdot), (R_{-}(n,p),R_{+}(n,p))_{p \geq 1})) \stab (\mathbb{U}_t(G,\cdot), (R_{p-},R_{p+})_{p\geq 1}) 
\end{align}
in the space $\mathcal{C}([-A,A]^{d-l}) \times \mathbb{R}^{\mathbb{N}}\times \mathbb{R}^{\mathbb{N}}$, where $(\mathbb{U}_t(G,\cdot),(R_{p-},R_{p+})_{p\geq 1})$ is defined on an extension $(\tilde{\Omega}, \tilde{\mathcal{F}}, \tilde{\mathcal{P}})$ of the original probability space, $\mathbb{U}_t(G,\cdot)$ is $\mathcal{F}$-conditionally independent of $(\kappa_k, \psi_{k\pm})_{k\geq1}$ and $\mathcal{F}$-conditionally centered Gaussian with covariance structure 
\begin{align}\label{cov}
C(\mathbf{y},\mathbf{y}'):=&\mathbb{E}[\mathbb{U}_t(G,\mathbf{y})\mathbb{U}_t(G,\mathbf{y}')| \mathcal{F}]\\
=&\sum_{i,j=1}^l\int_0^t \Big(\int_{\mathbb{R}} f_i(u,\mathbf{y})f_j(u,\mathbf{y'}) \phi_{\sigma_s}(u) du -\Big(\int_{\mathbb{R}}f_i(u,\mathbf{y}) \phi_{\sigma_s}(u) du \Big) \Big(\int_{\mathbb{R}} f_j(u,\mathbf{y}') \phi_{\sigma_s}(u) du \Big) ds\Big) \nonumber
\end{align} 
where 
\[
f_i(u,\mathbf{y})=\int_{[0,t]^{l-1} }\int_{\mathbb{R}^{l-1}} G(\sigma_{s_1}v_1,\dots,\sigma_{s_{i-1}}v_{i-1},u,\sigma_{s_{i+1}}v_{i+1},\dots, \sigma_{s_{l}}v_{l}, \mathbf{y}) \phi(\mathbf{v})d\mathbf{v} d\mathbf{s}.
\]
\end{proposition}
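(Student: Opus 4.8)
The plan is to prove the finite-dimensional stable convergence together with tightness in $\mathcal{C}([-A,A]^{d-l})$, and then to obtain the joint limit with $(R_{-}(n,p),R_{+}(n,p))_{p\geq 1}$ from the properties of stable convergence. Since the statistic is expressed purely through the Gaussian proxies $\alpha_i^n=\sqrt{n}\,\sigma_{(i-1)/n}\Delta_i^n W$, these are, conditionally on $\mathcal{F}$, independent centered normal variables with variances $\sigma_{(i-1)/n}^2$. First I would replace the Taylor-expansion device of \cite[Prop. 4.3]{PSZ}, which is unavailable here because $G$ is not differentiable across the hyperplanes $\{x_i=0\}$, by a Hoeffding--H\'ajek projection carried out conditionally on $\mathcal{F}$. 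Writing $\mathbb{U}_t^n(G,\mathbf{y})$ as $\sqrt{n}\,n^{-l}\sum_{\mathbf{i}}(G(\alpha_{\mathbf{i}}^n,\mathbf{y})-\mathbb{E}[G(\alpha_{\mathbf{i}}^n,\mathbf{y})\mid\mathcal{F}])$ plus a bias term, I would check that the bias $\sqrt{n}(n^{-l}\sum_{\mathbf{i}}\rho_G(\sigma_{(\mathbf{i}-1)/n},\mathbf{y})-\int_{[0,t]^l}\rho_G(\sigma_{\mathbf{s}},\mathbf{y})d\mathbf{s})$ is a Riemann-sum error of order $O_{\mathbb{P}}(n^{-1})$ (its $\mathcal{F}$-conditional mean being $O(n^{-1})$ and its variance $O(n^{-2})$), so that after multiplication by $\sqrt{n}$ it is $o_{\mathbb{P}}(1)$; this is what makes the limit $\mathcal{F}$-conditionally centered. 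The terms with coinciding time-indices (the diagonal) carry only $O(n^{l-1})$ summands and are likewise negligible after centering and scaling.

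Next I would isolate the leading (degree-one) projection. Integrating out $l-1$ of the slots produces exactly the functions $f_i$ appearing in \eqref{cov}, and the dominant contribution takes the martingale-array form $M_t^n(\mathbf{y})=n^{-1/2}\sum_{k=1}^{\lfloor nt\rfloor}\sum_{r=1}^l(\bar f_r^n(\alpha_k^n,\mathbf{y})-\mathbb{E}[\bar f_r^n(\alpha_k^n,\mathbf{y})\mid\mathcal{F}_{(k-1)/n}])$, where the $\bar f_r^n$ are empirical versions of $f_r$ built from the remaining coordinates and converging, by Proposition \ref{ulln}, to their $\sigma$-integrated counterparts. I would then apply the stable martingale central limit theorem (the version in \cite[Thm. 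IX.7.28]{JS}, \cite[Ch. 4]{JP}): the $\mathcal{F}$-conditional bracket $n^{-1}\sum_k\mathbb{E}[\cdots\mid\mathcal{F}_{(k-1)/n}]$ converges, by a Riemann-sum argument, to $C(\mathbf{y},\mathbf{y}')$ in \eqref{cov}; the conditional Lyapunov condition follows from the Gaussian moments of $\alpha_k^n$ and the growth bounds \eqref{growth}; and the conditional covariations of $M^n$ with $W$ and with the martingales carrying the jumps vanish in the limit, which yields both the $\mathcal{F}$-conditional Gaussianity and the $\mathcal{F}$-conditional independence from $(\kappa_k,\psi_{k\pm})_{k\geq 1}$. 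The negligibility of the higher-order Hoeffding terms is standard once one controls their $L^2$-size, which is $O(n^{-1})$ after scaling.

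The hard part will be the combination of the non-smoothness of $G$ at $\{x_i=0\}$ with the functional requirement in $\mathbf{y}$. Because I never differentiate $G$ in the $\mathbf{x}$-variables, the singularities of $|x_i|^{p_i}$ enter only through moments such as $\mathbb{E}[|\alpha_k^n|^{p_i}]$ and $\mathbb{E}[|\alpha_k^n|^{2p_i}]$, which are finite for all $p_i$; the role of $0<p_i<1$ and of the cross-conditions $\gamma_j+p_i<1$ ($i\neq j$) is precisely to guarantee that, after taking the $\mathbf{y}$-derivatives needed for tightness, the resulting Gaussian moments remain integrable and the diagonal and higher-projection remainders still vanish after $\sqrt{n}$-scaling. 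Concretely, for tightness I would use the Kolmogorov--Chentsov criterion: differentiating $\mathbb{U}_t^n(G,\cdot)$ in $\mathbf{y}$ (legitimate since $L\in\mathcal{C}^{d+1}$ in these arguments) and applying Burkholder--Rosenthal to the martingale part, I would establish a bound of the form $\mathbb{E}|\mathbb{U}_t^n(G,\mathbf{y})-\mathbb{U}_t^n(G,\mathbf{y}')|^{2\kappa}\leq K\|\mathbf{y}-\mathbf{y}'\|^{2\kappa}$ with $2\kappa>d-l$, uniformly in $n$, where the growth conditions \eqref{growth} control the constants through $u$ and the admissible exponents. Together with the finite-dimensional convergence this gives weak convergence in $\mathcal{C}([-A,A]^{d-l})$, and its stable reinforcement follows because the limiting bracket $C$ is $\mathcal{F}$-measurable.

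Finally, for the joint statement with $(R_{-}(n,p),R_{+}(n,p))_{p\geq 1}$ I would argue that $\mathbb{U}_t^n$ and the jump-localized increments are asymptotically independent. The contribution to $M_t^n$ of the finitely many intervals $(\tfrac{i-1}{n},\tfrac{i}{n}]$ containing a jump time $S_p$ is of order $n^{-1/2}$ and hence negligible, so in the limit $M^n$ depends only on the increments of $W$ away from the jump times, whereas, by Lemma \ref{lem4}, $(R_{-}(n,p),R_{+}(n,p))$ converges stably to $(R_{p-},R_{p+})$, which are measurable with respect to the independent variables $(\kappa_k,\psi_{k\pm})$ on the extension. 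Combining the stable central limit theorem for $M^n$ (whose limit is $\mathcal{F}$-conditionally Gaussian and orthogonal to the jump structure) with the stable convergence of the $R(n,p)$, and using that a stable limit is characterized by its $\mathcal{F}$-conditional law, yields the asserted joint stable convergence with $\mathbb{U}_t(G,\cdot)$ conditionally independent of $(R_{p-},R_{p+})_{p\geq 1}$.
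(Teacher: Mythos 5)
Your proposal runs on a different engine from the paper's --- a Hoeffding--H\'ajek projection fed into the stable martingale CLT, instead of the empirical-process representation of \cite{PSZ} --- but as written the engine does not turn over. First, the projection ``conditionally on $\mathcal{F}$'' is vacuous: $\alpha_{\mathbf{i}}^n$ is $\mathcal{F}$-measurable, so $\E[G(\alpha_{\mathbf{i}}^n,\mathbf{y})\mid\mathcal{F}]=G(\alpha_{\mathbf{i}}^n,\mathbf{y})$. What you actually center at is the frozen-volatility Gaussian expectation $\rho_G(\sigma_{(\mathbf{i}-1)/n},\mathbf{y})$, and for a multi-index $\mathbf{i}$ with distinct components this is not a conditional expectation with respect to any $\sigma$-field of the filtration. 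As a consequence, your array $M_t^n(\mathbf{y})$ is not a martingale array: the empirical kernels $\bar f_r^n(\cdot,\mathbf{y})$ are averages over the remaining $l-1$ slots and therefore involve increments $\alpha_j^n$ with $j>k$, so the $k$-th summand is neither $\mathcal{F}_{k/n}$-measurable nor conditionally centered given $\mathcal{F}_{(k-1)/n}$, and \cite[Thm. IX.7.28]{JS} cannot be invoked. Nor can you repair this by replacing $\bar f_r^n$ with its limit $f_r$: Proposition \ref{ulln} gives consistency with no rate, and the replacement error sits inside a $\sqrt{n}$-scaled sum, so it is a priori $O_{\PP}(1)$, not $o_{\PP}(1)$. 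An ordered-index variant (summing over $i_1<\dots<i_l$ so that the averaged kernel uses only past data) would restore adaptedness, but then the natural compensator is $\sum_k\E[\,\cdot\mid\mathcal{F}_{(k-1)/n}]$ rather than $\int\rho_G$, and showing their difference is $o_{\PP}(n^{-1/2})$ is a problem of the same order of difficulty, which your outline does not address.

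Second, the claim that negligibility of the higher-order (degenerate) Hoeffding terms ``is standard once one controls their $L^2$-size, which is $O(n^{-1})$'' is an i.i.d.\ fact that fails here: degeneracy must be taken relative to a fixed reference law, whereas the conditional law of $\alpha_k^n$ given $\mathcal{F}_{(k-1)/n}$ is $\mathcal{N}(0,\sigma_{(k-1)/n}^2)$ --- random, and adapted to a filtration generated in part by $W$ itself --- so the orthogonality relations behind the $O(n^{-1})$ computation are destroyed; this dependence is the entire difficulty of the proposition. (Relatedly, the vanishing of the covariation of $M^n$ with $W$ is not automatic: it is exactly where the evenness of $G$ in the first $l$ arguments must be used, since otherwise the limit acquires the non-centered component visible in $\tilde{\E}[\mathbb{G}(t,x)\mid\mathcal{F}]=\int_0^t\overline{\Phi}_{\sigma_s}(x)\,dW_s$.) The paper sidesteps all of this with an exact telescoping, $\mathbb{U}_t^n(G,\mathbf{y})=\sum_k Z_k^n(G,\mathbf{y})+R^n(\mathbf{y})$, where each $Z_k^n$ is an integral of $G$ against $d\mathbb{G}_n$ and products of $F_n,\bar F_n$; because the decomposition is exact, only consistency (no rates) of $F_n,\bar F_n$ is needed, the joint stable convergence of $(\mathbb{G}_n,(R_{\pm}(n,p)))$ supplies the limit, the non-differentiability of $|x_i|^{p_i}$ is tamed by the smoothing functions $\psi_\epsilon$ (this is where $0<p_i<1$ and $\gamma_j+p_i<1$ genuinely enter, not merely through finiteness of Gaussian moments), and tightness is obtained from Bickel--Wichura block increments \cite{BW}. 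Until you confront the non-adaptedness of the projected kernels, the finite-dimensional convergence --- and hence the proposition --- is not proved by your argument.
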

\begin{remark}
The proposition is stated for the approximations $\alpha_i^n$ of the increments of $X$. We remark that the result is still true in the finite dimensional distribution sense if we replace $\alpha_i^n$ by the increments $\Delta_i^n X$. This follows by the same arguments as the elimination of jumps step in Theorem \ref{cltmc} and Proposition \ref{appal}.    
\end{remark}

We will now state the main theorem of this section. After some approximation steps the proof will mainly consist of an application of the previously established methods in combination with the continuous mapping theorem. 

\begin{theorem}\label{cltmc}
Let $0 \leq l \leq d$ and $H: \mathbb{R}^d \to \mathbb{R}$ be a function that is even in the first $l$ arguments and can be written in the form $H(\mathbf{x},\mathbf{y})=|x_1|^{p_1}\cdots |x_l|^{p_l} |y_1|^{q_1} \cdots |y_{d-l}|^{q_{d-l}}L(\mathbf{x},\mathbf{y})$ for some function $L \in \mathcal{C}^{d+1}(\mathbb{R}^{d}) $ and constants $p_1,\dots,p_{l}, q_1,\dots, q_{d-l} \in \mathbb{R}$ with $0<p_1, \dots, p_l<1$ and $q_1,\dots, q_{d-l} >3$. We further assume that $L$ fulfills the same assumptions as in Proposition \ref{ustab}. Then we have, for a fixed $t>0$
\begin{align*}
\sqrt{n}\Big(Y_t^n&(H,X,l)-Y_t(H,X,l)\Big) \\
&\stab V'(H,X,l)_t=\sum_{\mathbf{k}:T_\mathbf{k}\leq t}\Big(  \sum_{j=l+1}^d\int_{[0,t]^l}\rho_{\partial_j H} (\sigma_{\mathbf{u}}, \Delta X_{T_\mathbf{k}})  d\mathbf{u} R_{k_j}+ \mathbb{U}_t (H,\Delta X_{T_\mathbf{k}}) \Big).
\end{align*}
The limiting process is $\mathcal{F}$-conditionally centered Gaussian with variance
\begin{align}\label{covm}
\mathbb{E}[(V'(H,X,l)_t)^2|\mathcal{F}]=\sum_{s \leq t} \Big(\sum_{k=l+1}^d \tilde{V}_k(H,X,l, \Delta X_s)\Big)^2 \sigma_s^2 + \sum_{\mathbf{s_1},\mathbf{s_2}\in [0,t]^{d-l}} C(\Delta X_{\mathbf{s_1}},\Delta X_{\mathbf{s_2}}),
\end{align}
where the function $C$ is given in \eqref{cov} and
\[
\tilde{V}_k(H,X,l, y)=\sum_{s_{l+1},\dots, s_{k-1},s_{k+1},\dots, s_d\leq t} \int_{[0,t]^l} \rho_{\partial_k H} (\sigma_{\mathbf{u}}, \Delta X_{s_{l+1}},\dots,\Delta X_{s_{k-1}},y,\Delta X_{s_{k+1}},\dots,\Delta X_{s_{d}}) d\mathbf{u}.
\]
Furthermore the $\mathcal{F}$-conditional law of the limit does not depend on the choice of the sequence $(T_k)_{k \in \mathbb{N}}$.
\end{theorem}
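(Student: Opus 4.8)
The plan is to follow the heuristic decomposition sketched before the statement and to reduce everything to the two uniform results, Proposition~\ref{ulln} and Proposition~\ref{ustab}, together with the jump-handling machinery of Theorem~\ref{stabvj}. First I would localize as indicated, so that all coefficients are bounded and $\int\Gamma(z)^r\,\lambda(dz)<\infty$. The first genuine step is an approximation: replace the scaled increments $\sqrt{n}\Delta_{\mathbf{i}}^nX$ appearing in the first $l$ arguments of $H$ by the first order terms $\alpha_i^n=\sqrt{n}\sigma_{(i-1)/n}\Delta_i^nW$. Since $0<p_i<1$, the error from eliminating the jumps and the drift in these arguments is negligible after multiplication by $\sqrt{n}$; this is precisely the elimination-of-jumps argument already used for \eqref{glmy} and made rigorous in Proposition~\ref{appal}. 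After this step $\sqrt{n}(Y_t^n(H,X,l)-Y_t(H,X,l))$ is asymptotically equivalent to the displayed expression preceding the theorem, in which only the last $d-l$ arguments still carry the genuine increments of $X$.

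Next I would introduce the splitting $X=X(m)+X'(m)$ into small and big jumps and work on $\Omega_n(m)$, exactly as in the proof of Theorem~\ref{stabvj}. Because $q_1,\dots,q_{d-l}>3$, only the big jumps survive in the last $d-l$ arguments, so the double sum over $\mathbf{j}$ collapses to a sum over the finitely many jump configurations $\mathbf{q}$ with $S_{\mathbf{q}}\le t$. For a fixed such configuration a Taylor expansion of $H$ in its last $d-l$ arguments around the jump values $\Delta X_{S_{\mathbf{q}}}$, using the representation $\Delta_j^nX=\Delta X_{S_p}+R(n,p)/\sqrt{n}$ on the interval containing $S_p$, produces after multiplication by $\sqrt{n}$ two contributions: a first order term $\frac{1}{n^l}\sum_{\mathbf{i}}\sum_{k=l+1}^d\partial_kH(\alpha_{\mathbf{i}}^n,\Delta X_{S_{\mathbf{q}}})\,R(n,q_k)$ and the centered term $\mathbb{U}_t^n(H,\Delta X_{S_{\mathbf{q}}})$ from \eqref{clte}; the remainder is controlled by the size estimates on $X(m)$ and vanishes in the iterated limit as in \eqref{as1}.

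The core of the argument is the passage to the limit for fixed $m$. Because the random jump values $\Delta X_{S_{\mathbf{q}}}$ are $\mathcal{F}$-measurable and lie in the compact set $[-A,A]^{d-l}$, I would invoke the \emph{uniform} conclusions of Proposition~\ref{ulln} and Proposition~\ref{ustab}: the former gives $\mathbb{B}_t^n(\partial_kH,\cdot)\toop\int_{[0,t]^l}\rho_{\partial_kH}(\sigma_{\mathbf{u}},\cdot)\,d\mathbf{u}$ uniformly (the kernels $\partial_kH$, $k>l$, having the required polynomial growth in the first $l$ arguments), and the latter gives the joint stable convergence of $(\mathbb{U}_t^n(H,\cdot),(R(n,p)_{-},R(n,p)_{+})_{p\ge1})$ towards $(\mathbb{U}_t(H,\cdot),(R_{p-},R_{p+})_{p\ge1})$ in $\mathcal{C}([-A,A]^{d-l})\times\mathbb{R}^{\mathbb{N}}\times\mathbb{R}^{\mathbb{N}}$. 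Evaluating these uniform limits at $\mathbf{y}=\Delta X_{S_{\mathbf{q}}}$ is a continuous-mapping operation, legitimate because there are only finitely many jumps and the evaluation map is continuous; combined with Lemma~\ref{lem4} this yields stable convergence of the $m$-truncated statistic to $\sum_{\mathbf{q}:S_{\mathbf{q}}\le t}(\sum_{k=l+1}^d\int_{[0,t]^l}\rho_{\partial_kH}(\sigma_{\mathbf{u}},\Delta X_{S_{\mathbf{q}}})\,d\mathbf{u}\,R_{q_k}+\mathbb{U}_t(H,\Delta X_{S_{\mathbf{q}}}))$. Letting $m\to\infty$ via Lemma~\ref{lem6}, using the absolute summability of the jumps (from $r<1$) to control the discarded small jumps as in \eqref{as3}, produces the stated limit $V'(H,X,l)_t$; the replacement of $Y_t$ by the discretized $Y_{\lfloor nt\rfloor/n}$ is handled as in \eqref{rest}.

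Finally the conditional law is read off from the structure of $V'(H,X,l)_t$: on the extension the Gaussian family $(R_{k_j})$ and the process $\mathbb{U}_t(H,\cdot)$ are $\mathcal{F}$-conditionally independent by Proposition~\ref{ustab}, $\mathbb{U}_t(H,\cdot)$ is $\mathcal{F}$-conditionally centered Gaussian with covariance $C$, and each $R_{k_j}$ is $\mathcal{F}$-conditionally centered Gaussian with the variance recorded in Theorem~\ref{stabvj}. Squaring and taking conditional expectations then splits into the $\sigma_s^2$-weighted sum built from the $\tilde{V}_k$ and the double sum of $C(\Delta X_{\mathbf{s_1}},\Delta X_{\mathbf{s_2}})$, which is exactly \eqref{covm}; independence of the conditional law from the choice of $(T_k)$ follows as in Theorem~\ref{stabvj}. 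The step I expect to be hardest is this combination: verifying that all the mixed error terms — where a derivative falls on a continuous argument instead of a jump, or where a big jump interacts with a continuous fluctuation — are asymptotically negligible, and rigorously justifying the substitution of the random jumps $\Delta X_{S_{\mathbf{q}}}$ into the uniform limit theorems rather than treating the second argument as deterministic.
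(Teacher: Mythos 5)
Your proposal follows essentially the same route as the paper's proof: localization, elimination of jumps and the $\alpha_i^n$ approximation in the first $l$ arguments (Proposition \ref{appal}), the big-jump splitting on $\Omega_n(m)$ with a first-order expansion in the last $d-l$ arguments (Proposition \ref{appsp}), passage to the limit for fixed $m$ via the uniform results of Propositions \ref{ulln} and \ref{ustab} combined with Lemma \ref{lem4} and the continuous mapping theorem, the iterated limit $m\to\infty$ via Lemma \ref{lem6}, and the same reading of the conditional variance from the conditional independence of $\mathbb{U}_t(H,\cdot)$ and the $R_{k_j}$. The only place where the paper is more careful than your sketch is the continuous-mapping step that you yourself flag as hardest: since the number of big jumps is random and $X$ may have infinitely many jumps in total, the paper does not argue with ``finitely many jumps'' directly but first truncates the big-jump sum to the deterministic index set $\{1,\dots,k\}$, applies a fixed map $\phi_k$ on $\mathcal{C}[-A,A]^{(d-l)}\times(\mathcal{C}[-A,A]^{(d-l)})^{d-l}\times\ell^2_A\times\mathbb{R}^{\mathbb{N}}\times\mathbb{R}^{\mathbb{N}}$ whose second component handles the infinite sum over all jumps through the $\ell^2_A$ structure and the weights $|x_j|^{q_i}$, and only then lets $k\to\infty$ and $m\to\infty$ (steps \eqref{pr1}--\eqref{pr4}).
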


\begin{remark}
The result coincides with the central limit theorem in section \ref{tjc} if $l=0$, but under stronger assumptions. In particular the assumed continuity of $\sigma$ yields that the limit is always conditionally Gaussian. We further remark that the theorem also holds in the finite distribution sense in $t$. 
\end{remark}

\begin{proof}
In the first part of the proof we will eliminate the jumps in the first argument. We split $X$ into its continuous part $X^c$ and the jump part $X^d=\delta \ast \mathfrak{p}$ via $X=X_0+X^c+X^d$. Note that $X^d$ exists since the jumps are absolutely summable under our assumptions. We will now show that
\[
\xi_n = \sqrt{n}\Big(\frac{1}{n^l}\sum_{\textbf{i}\in \mathcal{B}_t^n (l)}\sum_{\textbf{j}\in \mathcal{B}_t^n (d-l)} H(\sqrt{n} \Delta_{\mathbf{i}}^n X, \Delta_{\mathbf{j}}^n X) - \frac{1}{n^l}\sum_{\textbf{i}\in \mathcal{B}_t^n (l)}\sum_{\textbf{j}\in \mathcal{B}_t^n (d-l)} H(\sqrt{n} \Delta_{\mathbf{i}}^n X^c, \Delta_{\mathbf{j}}^n X)\Big) \toop 0. 
\]
Observe that under our growth assumptions on $L$ we can deduce  
\begin{align}\label{ineq}
|L(\mathbf{x}+\mathbf{z},\mathbf{y})-L(\mathbf{x},\mathbf{y})| \leq  K u(\mathbf{y}) (1+\sum_{i=1}^l \left\|\mathbf{x}\right\|^{\gamma_i}) \sum_{j=1}^l |z_j|^{p_j}
\end{align}
This inequality trivially holds if $\left\|\mathbf{z}\right\|>1$ because $\left\|L(\mathbf{x},\mathbf{y})\right\|\leq u(\mathbf{y})$. In the case $\left\|\mathbf{z}\right\|\leq 1$ we can use the mean value theorem in combination with $|z|\leq |z|^p$ for $|z|\leq 1$ and $0<p<1$. Since we also have $\big||x_i+z_i|^{p_i}-|x_i|^{p_i}\big| \leq |z_i|^{p_i}$ for $1\leq i \leq l$, we have, with $\mathbf{q}=(q_1,\dots,q_{d-l})$, the estimate 
\begin{align*}
|H(\mathbf{x}+\mathbf{z},\mathbf{y})-H(\mathbf{x},\mathbf{y})| \leq Ku(\mathbf{y}) |\mathbf{y}|^{\mathbf{q}} \sum_{\mathbf{m}}P_{\mathbf{m}} (\mathbf{x}) |\mathbf{z}|^{\mathbf{m}} 
\end{align*}
where $P_{\mathbf{m}} \in \mathfrak{P}(l)$ (see \eqref{power} for a definition) and the sum runs over all $\mathbf{m}=(m_1,\dots, m_l)\neq (0,\dots,0)$ with $m_j$ either $p_j$ or $0$. We do not give an explicit formula here since the only important property is $\mathbb{E}[P_{\mathbf{m}}(\sqrt{n} \Delta_{\mathbf{i}}^n X')^q] \leq K$ for all $q \geq 0$, which directly follows from the Burkholder inequality. 
 Because of the boundedness of $X$ and the continuity of $u$ this leads to the following bound on $\xi_n$:
\[
|\xi_n| \leq \Big(K\sum_{\textbf{j}\in \mathcal{B}_t^n (d-l)}|\Delta_{\mathbf{j}}^n X|^{\mathbf{q}}\Big) \Big(\frac{\sqrt{n}}{n^l}\sum_{\textbf{i}\in \mathcal{B}_t^n (l)} \sum_{\mathbf{m}}P_{\mathbf{m}} (\sqrt{n} \Delta_{\mathbf{i}}^n X^c) |\sqrt{n} \Delta_{\mathbf{i}}^n X^d|^{\mathbf{m}}\Big).
\]
The first factor converges in probability to some finite limit, and hence it is enough to show that the second factor converges in $L^1$ to $0$. Without loss of generality we restrict ourselves to the summand with $\mathbf{m}=(p_1,\dots, p_k, 0,\dots,0)$ for some $1\leq k \leq l$. From \cite[Lemma 2.1.7]{JP} it follows that 
\begin{align}\label{jie}
 \mathbb{E}[ | \Delta_i^n X^d|^{q}| \mathcal{F}_{\frac{i-1}{n}}] \leq \frac{K}{n} \quad \text{for all } q >0. 
\end{align}

Let $r:=\max_{1\leq i \leq l} p_i$ and $b_k(\mathbf{i}):=\#\left\{i_1,\dots,i_k \right\}$ for $\mathbf{i}=(i_1,\dots, i_l)$. Note that the number of $\mathbf{i} \in \mathcal{B}_t^n (l)$ with $b_k(\mathbf{i})=m$ is of order $n^{m+l-k}$ for $1\leq m \leq k$. An application of H\"older inequality, successive use of \eqref{jie} and the boundedness of $X$ gives 
\begin{align*}
&\mathbb{E}\Big(\frac{\sqrt{n}}{n^l}\sum_{\textbf{i}\in \mathcal{B}_t^n (l)} P_{\mathbf{m}} (\sqrt{n} \Delta_{\mathbf{i}}^n X^c) |\sqrt{n} \Delta_{i_1}^n X^d|^{p_1}\dots |\sqrt{n} \Delta_{i_k}^n X^d|^{p_k}\Big) \\
\leq &\frac{n^{1/2+kr/2}}{n^l} \sum_{\textbf{i}\in \mathcal{B}_t^n (l)} \Bigg(\mathbb{E}[P_{\mathbf{m}} (\sqrt{n} \Delta_{\mathbf{i}}^n X^c)^{\frac{4}{1-r}} ] \Bigg)^{\frac{1-r}{4}}\Big(\mathbb{E}\Big[ \Big( |\Delta_{i_1}^n X^d|^{p_1}\dots |\Delta_{i_k}^n X^d|^{p_k}\Big)^{\frac{4r}{3+r}} \Big]\Big)^{\frac{3+r}{4}}\\
\leq & K\frac{n^{1/2+kr/2}}{n^l}\sum_{\textbf{i}\in \mathcal{B}_t^n (l)} n^{-b_k (\mathbf{i}) (3+r)/4}\leq K\frac{n^{1/2+kr/2}}{n^l}\sum_{j=1}^k n^{-j (3+r)/4} n^{j+l-k}=K\sum_{j=1}^k n^{(2-2k+(2k-j)(r-1))/4}.  
\end{align*}
The latter expression converges to $0$ since $r<1$.

In the next step we will show that we can replace the increments $\Delta_i^n X^c$ of the continuous part of $X$ by their first order approximation $\alpha_i^n=\sqrt{n}\sigma_{\frac{i-1}{n}}\Delta_i^n W$.
\begin{proposition}\label{appal} It holds that
\begin{align*}
\xi_n' = \sqrt{n}\Big(\frac{1}{n^l}\sum_{\textbf{i}\in \mathcal{B}_t^n (l)}\sum_{\textbf{j}\in \mathcal{B}_t^n (d-l)} H(\sqrt{n}\Delta_{\mathbf{i}}^n X^c, \Delta_{\mathbf{j}}^n X)-\frac{1}{n^l}\sum_{\textbf{i}\in \mathcal{B}_t^n (l)}\sum_{\textbf{j}\in \mathcal{B}_t^n (d-l)}  H(\alpha_{\mathbf{i}}^n, \Delta_{\mathbf{j}}^n X) \Big) \toop 0
\end{align*}
as $n \to \infty$.
\end{proposition}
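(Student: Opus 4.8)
The plan is to split the kernel into the jump part in the last $d-l$ coordinates, which is tight, and the ``continuous'' part in the first $l$ coordinates, to which the first-order approximation results for U-statistics apply. Under the localization $|X_t|\le A$ every increment obeys $|\Delta_j^n X|\le 2A$, so writing $H(\mathbf{x},\mathbf{y})=|\mathbf{y}|^{\mathbf{q}} g(\mathbf{x},\mathbf{y})$ with $g(\mathbf{x},\mathbf{y})=|x_1|^{p_1}\cdots|x_l|^{p_l}L(\mathbf{x},\mathbf{y})$ and $\mathbf{q}=(q_1,\dots,q_{d-l})$, and setting $\beta_i^n:=\sqrt{n}\Delta_i^n X^c$, I would first record the bound
\[
|\xi_n'|\le\Big(\prod_{i=1}^{d-l}\sum_{j=1}^{\left\lfloor nt\right\rfloor}|\Delta_j^n X|^{q_i}\Big)\sup_{\mathbf{y}\in[-2A,2A]^{d-l}}\Big|\frac{\sqrt{n}}{n^l}\sum_{\mathbf{i}\in\mathcal{B}_t^n(l)}\big(g(\beta_{\mathbf{i}}^n,\mathbf{y})-g(\alpha_{\mathbf{i}}^n,\mathbf{y})\big)\Big|.
\]
Since $q_i>2$, Theorem \ref{lln} shows that the prefactor converges in probability to $\prod_i\sum_{s\le t}|\Delta X_s|^{q_i}<\infty$, hence is $O_{\mathbb{P}}(1)$, and the task reduces to proving that the supremum tends to $0$ in probability.

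To treat the supremum I would replace $\beta_{i_k}^n$ by $\alpha_{i_k}^n$ one coordinate at a time via the telescoping identity
\[
g(\beta_{\mathbf{i}}^n,\mathbf{y})-g(\alpha_{\mathbf{i}}^n,\mathbf{y})=\sum_{k=1}^l\Big(\prod_{m<k}|\alpha_{i_m}^n|^{p_m}\Big)\Big(\prod_{m>k}|\beta_{i_m}^n|^{p_m}\Big)\Delta^{(k)},
\]
and split the one-coordinate difference $\Delta^{(k)}$ into a power part $(|\beta_{i_k}^n|^{p_k}-|\alpha_{i_k}^n|^{p_k})L(\dots,\beta_{i_k}^n,\dots)$ and an $L$-part $|\alpha_{i_k}^n|^{p_k}(L(\dots,\beta_{i_k}^n,\dots)-L(\dots,\alpha_{i_k}^n,\dots))$, controlled respectively by the subadditivity estimate $\big||\beta_{i_k}^n|^{p_k}-|\alpha_{i_k}^n|^{p_k}\big|\le|\beta_{i_k}^n-\alpha_{i_k}^n|^{p_k}$ and by the mean value theorem together with the growth bounds \eqref{growth}, the constraints $\gamma_j+p_i<1$ securing integrability of the products. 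The $l-1$ remaining single-index sums $\tfrac1n\sum_{i_m}|\cdot|^{p_m}$ converge to finite limits exactly as in Proposition \ref{ulln}, while the supremum over $\mathbf{y}$ on the compact set $[-2A,2A]^{d-l}$ is handled by the familiar reduction to a finite grid plus a modulus-of-continuity estimate, as in the proofs of \eqref{glmy} and \eqref{glmx}. This reduces the entire expression, for each coordinate $k$, to a critical single-index quantity of the form $\tfrac{1}{\sqrt{n}}\sum_i\big(f(\beta_i^n)-f(\alpha_i^n)\big)$ with $f(x)=|x|^{p_k}(\cdots)$.

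The main obstacle is precisely this last one-dimensional negligibility, and it cannot be obtained by termwise absolute estimates. Indeed, using $\mathbb{E}(|\beta_i^n-\alpha_i^n|^2|\mathcal{F}_{(i-1)/n})\le K\zeta_i^n$ with $\tfrac1n\sum_i\zeta_i^n\to0$ (which follows because $\sigma$ is a continuous It\^o semimartingale) together with the Hölder bound $|\beta_i^n-\alpha_i^n|^{p_k}$ only yields a contribution of order $n^{(1-p_k)/2}$, which \emph{diverges} for $p_k<1$. Asymptotic negligibility therefore genuinely relies on cancellation: one centers each summand by its $\mathcal{F}_{(i-1)/n}$-conditional expectation, bounds the resulting martingale by the sum of conditional variances (which is $o(1)$ after the $n^{-1/2}$ scaling), and shows separately that the conditional means sum to a negligible term. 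This is exactly the first-order approximation argument carried out for the continuous-semimartingale central limit theorem in \cite[Prop. 4.3]{PSZ} and re-derived, uniformly in $\mathbf{y}$, in the appendix for Proposition \ref{ustab}, so I would invoke those estimates here. The delicate point is that the non-smoothness of $|x|^{p_k}$ at the origin for $p_k<1$ rules out a naive Taylor expansion and forces one to treat the event where $\alpha_i^n$ is small separately, which is where the bulk of the technical work lies.
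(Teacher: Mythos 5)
Your skeleton coincides with the paper's: factor out the jump prefactor (which is $O_{\mathbb{P}}(1)$ by Theorem \ref{lln}), telescope one coordinate at a time, and reduce everything to the negligibility of $n^{-1/2}\sum_{i}\bigl(f(\beta_i^n)-f(\alpha_i^n)\bigr)$ with $\beta_i^n=\sqrt{n}\Delta_i^n X^c$; your diagnosis that termwise estimates only give the divergent order $n^{(1-p_k)/2}$ and that genuine martingale cancellation is required is also correct. But there are two gaps. The first is that the source you invoke for the cancellation step cannot supply it: Proposition \ref{ustab} and \cite[Prop. 4.3]{PSZ} are statements about statistics built exclusively from the approximations $\alpha_i^n$, and their proofs (via the empirical process of the $\alpha_j^n$) never compare $\alpha_i^n$ with $\sqrt{n}\Delta_i^n X^c$. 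In fact, the paper's remark following Proposition \ref{ustab} obtains the version with true increments \emph{from} Proposition \ref{appal}, so quoting those estimates here is circular. What the paper actually does is verify the structural inequalities \eqref{incr1}--\eqref{incr3} for $g(w,\mathbf{x},\mathbf{y})=|w|^{p_1}L(w,\mathbf{x},\mathbf{y})$ and then invoke \cite[Theorem 5.3.6]{JP}, a result designed precisely for this replacement when the function behaves like $|w|^{p_1}$, $p_1<1$, at the origin (using that $\sigma$ is a continuous It\^o semimartingale bounded away from zero).

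The second gap is where most of the paper's effort actually goes: your reduction to a ``single-index quantity'' ignores that $L$ depends on \emph{all} of the first $l$ arguments, and after the $\sqrt{n}$-scaling these are unbounded rather than confined to $[-2A,2A]$; they are moreover random and correlated with the increments in the $i_k$-sum, so you can neither condition on them nor restrict to a compact set for free. Your grid-plus-modulus argument is formulated only for the compact $\mathbf{y}$-range. The paper resolves this by proving the uniform statement \eqref{supcon} over $\{\|\mathbf{z}\|\le M\}$ and then separately controlling the contribution of the event $\{\|\sqrt{n}\Delta_{\mathbf{i}}^n X^c\|>M\}$ through the polynomial growth of $L$ and an estimate of the type $\mathbb{E}(B_n(M))\le K/M$, with the double limit $n\to\infty$ followed by $M\to\infty$. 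In addition, the modulus-of-continuity estimate itself is nontrivial because $g$ is not Lipschitz in its first argument near zero when $p_1<1$: the paper splits on $B_k^n=\{|\theta_k^n|\le|\alpha_k^n|/2\}$ (with $\theta_k^n=\sqrt{n}\Delta_k^n X^c-\alpha_k^n$), applies the mean value theorem only there, uses $|\theta_k^n|^{\gamma}\le K|\theta_k^n|/|\alpha_k^n|^{1-\gamma}$ on the complement, and exploits the negative moment bound $\mathbb{E}\,|\alpha_k^n|^{u(p_1-1)}\le K$ for $u(1-p_1)<1$, which is exactly where the assumption that $\sigma^{-1}$ is bounded enters. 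None of this is covered by the phrase ``familiar reduction to a finite grid plus a modulus-of-continuity estimate''; without these two ingredients the proposal does not close.
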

We shift the proof of this result to the appendix. Having simplified the statistics in the first argument, we now focus on the second one, more precisely on the process
\[
\theta_n(H) = \sqrt{n}\Big(\frac{1}{n^l}\sum_{\textbf{i}\in \mathcal{B}_t^n (l)}\sum_{\textbf{j}\in \mathcal{B}_t^n (d-l)}  H(\alpha_{\mathbf{i}}^n, \Delta_{\mathbf{j}}^n X) -\frac{1}{n^l}\sum_{\textbf{i}\in \mathcal{B}_t^n (l)}\sum_{\mathbf{s} \in [0,t]^{d-l}}  H(\alpha_{\mathbf{i}}^n, \Delta X_{\mathbf{s}}) \Big).
\] 
In the following we will use the notation from section \ref{cltjc}. We split $\theta_n(H)$ into 
\[
\theta_n(H) = \mathbbm{1}_{\Omega_n(m)} \theta_n(H) + \mathbbm{1}_{\Omega \backslash \Omega_n(m)} \theta_n(H).
\] 
Since $\Omega_n(m) \toop \Omega$ as $n \to \infty$, the latter term converges in probability to $0$ as $n\to \infty$. The following result will be shown in the appendix as well.
\begin{proposition}\label{appsp}
We have the convergence
\begin{align*} 
\mathbbm{1}_{\Omega_n(m)} \theta_n(H) -\frac{1}{n^l}\sum_{\textbf{i}\in \mathcal{B}_t^n (l)}\sum_{\mathbf{q} \in \mathcal{P}_t^n(m)^{d-l}} \sum_{k=l+1}^d \partial_k H(\alpha_\mathbf{i}^n , \Delta X_{S_\mathbf{q}})R(n,q_k) \toop 0
\end{align*}
if we first let $n\to \infty$ and then $m \to \infty$.
\end{proposition}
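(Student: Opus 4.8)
The key observation is that, for each fixed multi-index $\mathbf{i}$, the inner difference $\sqrt{n}\big(\sum_{\mathbf{j}\in\mathcal{B}_t^n(d-l)} H(\alpha_{\mathbf{i}}^n,\Delta_{\mathbf{j}}^n X)-\sum_{\mathbf{s}\in[0,t]^{d-l}} H(\alpha_{\mathbf{i}}^n,\Delta X_{\mathbf{s}})\big)$ is, up to the frozen parameter $\alpha_{\mathbf{i}}^n$ sitting inside the smooth factor, precisely a fully jump-dominated statistic of the type handled in Theorem \ref{stabvj} with its ``$l$'' and ``$d$'' both set equal to $d-l$. Indeed, for fixed $\mathbf{x}=\alpha_{\mathbf{i}}^n$ the map $\mathbf{y}\mapsto H(\mathbf{x},\mathbf{y})=|x_1|^{p_1}\cdots|x_l|^{p_l}\,|y_1|^{q_1}\cdots|y_{d-l}|^{q_{d-l}}\,L(\mathbf{x},\mathbf{y})$ has exactly the product-of-high-powers form required there, with $q_k>3$ and smooth prefactor $L(\mathbf{x},\cdot)\in\mathcal{C}^{d-l+1}$. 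The plan is therefore to run the decomposition \eqref{dcom} of that proof verbatim in the last $d-l$ coordinates, stopping at the pre-limit stage so as to retain the variables $R(n,q_k)$ rather than passing to their stable limit.

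Concretely, I would work on $\Omega_n(m)$, split $X=X(m)+X'(m)$, and classify each coordinate $j_{l+1},\dots,j_d$ according to whether its interval carries a big jump. The off-diagonal terms---those in which at least one coordinate lands on a small-jump interval, so that the corresponding increment $\Delta^n X(m)$ has size at most $2/m$ raised to a power $q_k>3$---cancel in pairs between the increment and jump sides exactly as in Proposition \ref{ap}, by means of the estimates \eqref{unin}--\eqref{unin2}. For the surviving fully diagonal term, indexed by $\mathbf{q}\in\mathcal{P}_t^n(m)^{d-l}$, I would use $\Delta^n X=\Delta X_{S_{q_k}}+R(n,q_k)/\sqrt{n}$ on each big-jump interval and Taylor-expand $H(\alpha_{\mathbf{i}}^n,\cdot)$ to first order around $\Delta X_{S_{\mathbf{q}}}$: the zeroth-order term cancels the subtracted jump sum (after discarding its small jumps), the first-order term is the target, and the $\sqrt{n}$ prefactor absorbs the $1/\sqrt{n}$ carried by $R(n,q_k)$. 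The quadratic remainder is $O_{\mathbb{P}}(1/\sqrt{n})$ by the $\mathcal{C}^{d+1}$-smoothness of $H$ and the tightness of $R(n,q)$ from Lemma \ref{lem4}.

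The genuine new difficulty, and the step I expect to be the main obstacle, is that all of these estimates must survive the outer averaging $\tfrac{1}{n^l}\sum_{\mathbf{i}}$ over roughly $n^l$ indices, whereas the bounds in Section \ref{tjc} were designed for parameters confined to a \emph{compact} set, while here $\alpha_{\mathbf{i}}^n=\sqrt{n}\,\sigma_{(i-1)/n}\Delta_i^n W$ ranges over all of $\mathbb{R}^l$. I would resolve this by carrying out the pointwise-in-$\mathbf{i}$ estimates with the constants $\beta_m$ allowed to grow polynomially in $\|\alpha_{\mathbf{i}}^n\|$, the growth exponents being supplied by the derivative bounds \eqref{growth} on $L$; every error term then takes the shape $|\alpha_{i_1}^n|^{p_1}\cdots|\alpha_{i_l}^n|^{p_l}\,(1+\|\alpha_{\mathbf{i}}^n\|^{\gamma})\,\beta_m$ times a bounded jump factor. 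Taking expectations and using that $\alpha_{\mathbf{i}}^n$ is conditionally centered Gaussian with $|\sigma|\leq A$---so that all of its moments are bounded uniformly in $n$---I would bound $\tfrac{1}{n^l}\sum_{\mathbf{i}}\mathbb{E}[\,\cdot\,]$ by a constant independent of $n$ (using the tower property to decouple the continuous and jump factors), and then let $m\to\infty$ so that the decaying sequence $\beta_m\to0$ forces the whole average to vanish. Threading these growth and Gaussian-moment bounds through every step of the Theorem \ref{stabvj} argument is the technical heart of the proof.
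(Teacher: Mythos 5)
Your overall architecture coincides with the paper's: on $\Omega_n(m)$ one isolates the big-jump intervals, writes $\Delta^n X=\Delta X_{S_q}+n^{-1/2}R(n,q)$ there, and a first-order expansion of $H(\alpha_{\mathbf{i}}^n,\cdot)$ produces exactly the compensating sum in the statement, plus remainders that vanish by the growth bounds on the second derivatives of $H$ and the tightness of $R(n,q)$ from Lemma \ref{lem4}; what is then left is to show that the $X(m)$-part (the analogue of the paper's term $\theta_n^{(1)}(H)$) is negligible. You have also correctly located the crux: the estimates of section \ref{tjc} (Lemma \ref{lem2}, via \eqref{unin}--\eqref{unin2}) hold uniformly over a \emph{deterministic compact} parameter set, while here the parameter slot carries the random, unbounded $\alpha_{\mathbf{i}}^n$. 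The gap is in your resolution of this crux. Lemma \ref{lem2} --- and any polynomially weighted extension of it you could prove from \eqref{growth} --- is a bound on $\mathbb{E}|A_n(z)|$ for each \emph{fixed deterministic} $z$, where $A_n(z)$ denotes the inner statistic $\sqrt{n}\,\big|\sum_j H(z,\Delta_j^n X(m))-\sum_s H(z,\Delta X(m)_s)\big|$. Such a bound gives no control over $\mathbb{E}\big[|\alpha_{i_1}^n|^{p_1}\cdots|\alpha_{i_l}^n|^{p_l}(1+\left\|\alpha_{\mathbf{i}}^n\right\|^{\gamma})\,A_n(\alpha_{\mathbf{i}}^n)\big]$, because $\alpha_{\mathbf{i}}^n$ and $A_n$ are driven by the \emph{same} Brownian motion: the increments $\Delta_j^n X(m)$ inside $A_n$ contain $\int\sigma\,dW$ over every interval, including those indexed by $\mathbf{i}$. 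Hence no tower-property or conditioning argument factorizes this expectation, and in general $\sup_z\mathbb{E}|A_n(z)|\le\beta_m$ is perfectly compatible with $\mathbb{E}|A_n(Z)|$ being of order one when $Z$ is dependent on $A_n$ (a ``moving spike''). Evaluating at a random argument requires control of a supremum over the parameter \emph{inside} the expectation, and neither your polynomial-weight device nor the phrase ``exactly as in Proposition \ref{ap}'' supplies it --- in Proposition \ref{ap} the parameters are the finitely many, uniformly bounded jump sizes $\Delta X_{S_{\mathbf{p}}}$, which is a genuinely easier situation.

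The paper's proof supplies precisely this missing device, in two steps. First a smooth compact truncation $H_k=\phi_k H$, whose error is handled by Cauchy--Schwarz together with tail estimates for $\alpha_i^n$ --- your Gaussian-moment input, but deployed only where it suffices. Second, and crucially, writing $H(x,y)=|x|^{p_1}\tilde g(x,y)$, the fundamental theorem of calculus in the first argument, $\tilde g_k(\alpha_i^n,y)=\tilde g_k(0,y)+\int_0^{\alpha_i^n}\partial_1\tilde g_k(u,y)\,du$, moves the random parameter into the limit of integration only; bounding that limit by the full compact support of $\phi_k$ leaves inner statistics whose parameter $u$ is a deterministic dummy variable, so that Fubini and Lemma \ref{lem2} (uniformly in $u$) apply, while the prefactor $\frac{1}{n}\sum_i|\alpha_i^n|^{p_1}$ is bounded in probability by Burkholder's inequality. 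This truncate-then-integrate decoupling is the idea your proposal lacks; with it in place, the remaining parts of your outline (the big-jump Taylor expansion, the negligibility of the $H(\alpha_i^n,n^{-1/2}R(n,p))$ terms, and the boundary contribution from jumps in $(\lfloor nt\rfloor/n,t]$) do match the paper's treatment of its terms $\theta_n^{(3)}$, $\theta_n^{(4)}$ and $\theta_n^{(2)}$.
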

Using all the approximations, in view of Lemma \ref{lem6} we are left with
\begin{align*}
\Phi_t^n(m):=& \frac{1}{n^l}\sum_{\textbf{i}\in \mathcal{B}_t^n (l)}\sum_{\mathbf{q} \in \mathcal{P}_t^n(m)^{d-l}} \sum_{k=l+1}^d \partial_k H(\alpha_\mathbf{i}^n , \Delta X_{S_\mathbf{q}})R(n,q_k) +  \sum_{\mathbf{s} \in [0,t]^{d-l}} \mathbb{U}_t^n (H, \Delta X_\mathbf{s})  \\
=& \sum_{\mathbf{q} \in \mathbb{N}^{d-l}} \Big( \mathbbm{1}_{ \mathcal{P}_t^n(m)^{d-l}}(\mathbf{q}) \sum_{k=l+1}^d  \mathbb{B}_t^n (\partial_k H, \Delta X_{S_{\mathbf{q}}})R(n,q_k) + \mathbb{U}_t^n (H, \Delta X_{S_\mathbf{q}})\Big).
\end{align*}
The remainder of the proof will consist of four steps. First we use for all $k\in \mathbb{N}$ the decomposition  $\Phi_t^n (m)=\Phi_t^n(m,k)+\tilde{\Phi}_t^n (m,k)$, where
\[
\Phi_t^n(m,k):= \sum_{q_1,\dots, q_{d-l}\leq k}  \mathbbm{1}_{ \mathcal{P}_t^n(m)^{d-l}}(\mathbf{q}) \sum_{k=l+1}^d  \mathbb{B}_t^n (\partial_k H, \Delta X_{S_{\mathbf{q}}})R(n,q_k) + \sum_{\mathbf{q} \in \mathbb{N}^{d-l}}\mathbb{U}_t^n (H, \Delta X_{S_\mathbf{q}}), 
\]
i.e.\ we consider only finitely many jumps in the first summand. We will successively show 
\begin{align}
\lim_{k \to \infty} \limsup_{n \to \infty} \mathbb{P} (| \tilde{\Phi}_t^n (m,k)| >\eta) =0 \quad \text{for all} \quad \eta >0,\label{pr1} \\
\Phi_t^n (m,k) \stab \Phi_t (m,k) \quad \text{as} \quad n \to \infty, \label{pr2}
\end{align}
for a process $\Phi_t (m,k)$ that will be defined in \eqref{phimk}. Finally, with $\Phi_t(m)$ defined in \eqref{phim} we will show
\begin{align}
\Phi_t(m,k) \toop \Phi_t(m) \quad \text {as} \quad k \to \infty, \label{pr3} \\
\Phi_t (m) \toop V'(H,X,l)_t. \label{pr4}
\end{align}
For \eqref{pr1} observe that we have $\mathcal{P}_t^n(m) \subset \mathcal{P}_t(m)$ and therefore
\begin{align*}
 \mathbb{P}\Big( | \tilde{\Phi}_t^n (m,k)| > \eta\Big) \leq  \mathbb{P}\big(  \big\{ \omega: \mathcal{P}_t (m,\omega) \not\subset \left\{ 1,\dots,k \right\} \big\} \big) \to 0 \quad \text{as} \quad k \to \infty,
\end{align*}
since the sets $\mathcal{P}_t (m,\omega)$ are finite for fixed $\omega$ and $m$. For \eqref{pr2} recall that $g$ was defined by $g(\mathbf{x},\mathbf{y})=|x_1|^{p_1}\cdots |x_l|^{p_l} L(\mathbf{x},\mathbf{y})$. By Propositions \ref{ulln} and \ref{ustab} and from the properties of stable convergence (in particular, we need joint stable convergence with sequences converging in probability, which is useful for the indicators below) we have
\begin{align*}
(\mathbb{U}_t^n (g,\cdot), (\mathbb{B}_t^n (\partial_j H,\cdot))_{j=l+1}^d, &(\Delta X_{S_p})_{p \in \mathbb{N}},(R(n,p))_{p \in \mathbb{N}}, (\mathbbm{1}_{\mathcal{P}_t^n(m)}(p))_{p \in \mathbb{N}}) \\
&\stab (\mathbb{U}_t (g,\cdot), (\mathbb{B}_t (\partial_j H,\cdot))_{j=l+1}^d, (\Delta X_{S_p})_{p \in \mathbb{N}},(R_p)_{p \in \mathbb{N}}, (\mathbbm{1}_{\mathcal{P}_t(m)}(p))_{p \in \mathbb{N}})
\end{align*}
as $n\to \infty$ in the space $\mathcal{C}[-A,A]^{(d-l)}\times (\mathcal{C}[-A,A]^{(d-l)})^{d-l} \times \ell^{2}_A \times \mathbb{R}^{\mathbb{N}}\times \mathbb{R}^{\mathbb{N}}$, where we denote by $\ell^2_A$ the metric space 
\[
\ell^2_A:= \left\{(x_k)_{k \in \mathbb{N}} \in \ell^2 \ ; \  |x_k| \leq A \ \text{for all} \ k \in \mathbb{N} \right\}.
\]
For $k \in \mathbb{N}$ we now define a continuous mapping on $\mathcal{C}[-A,A]^{(d-l)}\times (\mathcal{C}[-A,A]^{(d-l)})^{d-l} \times \ell^{2}_A \times \mathbb{R}^{\mathbb{N}}\times \mathbb{R}^{\mathbb{N}}$ into the real numbers via
\begin{align*}
\phi_k (f,(g_r)_{r=1}^{d-l}, (x_j)_{j \in \mathbb{N}},(y_j)_{j \in \mathbb{N}},(z_j)_{j \in \mathbb{N}})= \sum_{j_1,\dots,j_{d-l}=1}^k & z_{j_1}\cdots z_{j_{d-l}} \sum_{r=l+1}^d g_r(x_{j_1},\dots,x_{j_{d-l}}) y_{j_r} \\
& + \sum_{j_1,\dots,j_{d-l}=1}^{\infty} |x_{j_1}|^{q_1}\cdots |x_{j_{d-l}}|^{q_{d-l}} f(x_{j_1},\dots, x_{j_{d-l}}).  
\end{align*}
The continuous mapping theorem then yields
\begin{align} \label{phimk}
\Phi_t^n(m,k)&=\phi_k(\mathbb{U}_t^n (g,\cdot), (\mathbb{B}_t^n (\partial_r H,\cdot))_{r=l+1}^d, (\Delta X_{S_p})_{p \in \mathbb{N}},(R(n,p))_{p \in \mathbb{N}}, (\mathbbm{1}_{\mathcal{P}_t^n(m)}(p))_{p \in \mathbb{N}}) \nonumber \\
&\stab  \phi_k(\mathbb{U}_t (g,\cdot), (\mathbb{B}_t (\partial_r H,\cdot))_{r=l+1}^d, (\Delta X_{S_p})_{p \in \mathbb{N}},(R_p)_{p \in \mathbb{N}}, (\mathbbm{1}_{\mathcal{P}_t(m)}(p))_{p \in \mathbb{N}}) \nonumber \\
& \quad = \sum_{q_1,\dots, q_{d-l}\leq k}  \mathbbm{1}_{ \mathcal{P}_t(m)^{d-l}}(\mathbf{q}) \sum_{r=l+1}^d  \mathbb{B}_t (\partial_r H, \Delta X_{S_{\mathbf{q}}})R(n,q_r) + \sum_{\mathbf{q} \in \mathbb{N}^{d-l}}\mathbb{U}_t (H, \Delta X_{S_\mathbf{q}})=:\Phi_t(m,k).
\end{align}
For $k \to \infty$ we have
\begin{align}\label{phim}
\Phi_t(m,k) \toas  \Phi_t(m):=\sum_{\mathbf{q} \in \mathbb{N}^{d-l}}\Big(  \mathbbm{1}_{ \mathcal{P}_t(m)^{d-l}}(\mathbf{q}) \sum_{r=l+1}^d  \mathbb{B}_t (\partial_r H, \Delta X_{S_{\mathbf{q}}})R(n,q_r) + \sum_{\mathbf{q} \in \mathbb{N}^{d-l}}\mathbb{U}_t (H, \Delta X_{S_\mathbf{q}})\Big),
\end{align}
i.e.\ \eqref{pr3}. For the last assertion \eqref{pr4} we have 
\begin{align*}
\mathbb{P}(|\Phi_t(m)-V_t'(H,X,l)|>\eta) & \leq K \mathbb{E}[(\Phi_t(m)-V_t'(H,X,l))^2]= K \mathbb{E}[\mathbb{E}[(\Phi_t(m)-V_t'(H,X,l))^2|\mathcal{F}]] \\
& \leq K \mathbb{E}[\sum_{\mathbf{k} \in \mathbb{N}^{d-l}} \sum_{r=l+1}^d (1-\mathbbm{1}_{ \mathcal{P}_t(m)^{d-l}}(\mathbf{k})) |\mathbb{B}_t(\partial_r H, \Delta X_{S_\mathbf{k}})|^2] \\
& \leq K \mathbb{E} [\sum_{\mathbf{k} \in \mathbb{N}^{d-l}} \sum_{r=l+1}^d (1-\mathbbm{1}_{ \mathcal{P}_t(m)^{d-l}}(\mathbf{k})) \prod_{i=1}^{d-l}\big(|\Delta X_{S_{k_i}}|^{q_i} +|\Delta X_{S_{k_i}}|^{q_i-1}\big)^2  ]. 
\end{align*}
Since the jumps are absolutely summable and bounded the latter expression converges to $0$ as $m \to \infty$.
\end{proof}
\end{subsection}
\end{section}

\begin{section}{Appendix}\label{appen}
\begin{subsection}{Existence of the limiting processes}
We give a proof that the limiting processes in Theorem \ref{stabvj} and Theorem \ref{cltmc} are well-defined. The proof will be similar to the proof of \cite[Prop. 4.1.4]{JP}. We restrict ourselves to proving that
\begin{align}\label{wd}
\sum_{\mathbf{k} : T_{\mathbf{k}}\leq t} \int_{[0,t]^l} \rho_{\partial_{l+1} H}(\sigma_{\mathbf{u}},\Delta X_{T_{\mathbf{k}}}) d\mathbf{u} R_{k_1}
\end{align}
is defined in a proper way, corresponding to Theorem \ref{cltmc}. For $l=0$ we basically get the result for Theorem \ref{stabvj}, but under slightly stronger assumptions. The proof, however, remains the same. 

We show that the sum in \eqref{wd} converges in probability for all $t$ and that the conditional properties mentioned in the theorems are fulfilled. Let $I_m(t)=\left\{ n: 1\leq n \leq m, T_n \leq t\right\}$. Define
\[
Z(m)_t := \sum_{\mathbf{k} \in I_m(t)^{d-l}} \int_{[0,t]^l} \rho_{\partial_{l+1} H}(\sigma_{\mathbf{u}},\Delta X_{T_{\mathbf{k}}}) d\mathbf{u} R_{k_1}.
\]
By fixing $\omega \in \Omega$, we further define the process $Z^{\omega}(m)_t$ on $(\Omega',\mathcal{F}',\mathbb{P}')$ by $Z^{\omega}(m)_t (\omega')=Z(m)_t(\omega,\omega')$. The process is obviously centered, and we can immediately deduce 
\begin{align}\label{var}
\mathbb{E}'(Z^{\omega}(m)_t^2) =  \sum_{k_1 \in I_m(t)} \Big(\sum_{\mathbf{k} \in I_m(t)^{d-l-1}} \int_{[0,t]^l} \rho_{\partial_{l+1} H}(\sigma_{\mathbf{u}},\Delta X_{T_{k_1}},\Delta X_{T_{\mathbf{k}}}) d\mathbf{u} \Big)^2 \sigma_{T_{k_1}}^2,
\end{align}
\begin{align} \label{char}
\mathbb{E}'(e^{i u Z^{\omega}(m)_t}) = \prod_{k_1 \in I_m(t)} \int e^{i u \sum_{\mathbf{k} \in I_m(t)^{d-l-1}} \int_{[0,t]^l} \rho_{\partial_{l+1} H}(\sigma_{\mathbf{u}},\Delta X_{T_{k_1}},\Delta X_{T_{\mathbf{k}}}) d\mathbf{u} R_{k_1}} d\mathbb{P}'.
\end{align}
The processes $X$ and $\sigma$ are both c\`adl\`ag and hence bounded on $[0,T]$ for a fixed $\omega \in \Omega$. Let now $m,m' \in \mathbb{N}$ with $m' \leq m$ and observe that $I_m(t)^{q}\backslash I_{m'}(t)^q \subset I_m(T)^q\backslash I_{m'}(T)^q$ for all $q \in \mathbb{N}$ and $t\leq T$. Since $L$ and $\partial_1 L$ are bounded on compact sets, we obtain 
\begin{align*}
&\mathbb{E}'\Big[\Big(\sup_{t \in [0,T]} |Z^{\omega}(m)_t-Z^{\omega}(m')_t|\Big)^2\Big] \\
= & \mathbb{E}'\Big[\Big(\sup_{t \in [0,T]} \Big|\sum_{\mathbf{k} \in I_m(t)^{d-l}\backslash I_{m'}(t)^{d-l}} \int_{[0,t]^l} \rho_{\partial_{l+1} H}(\sigma_{\mathbf{u}},\Delta X_{T_{\mathbf{k}}}) d\mathbf{u} R_{k_1}\Big|\Big)^2\Big]\\
\leq & \mathbb{E}'\Big[\Big( \sum_{\mathbf{k} \in I_m(T)^{d-l}\backslash I_{m'}(T)^{d-l}} \int_{[0,T]^l} \big|\rho_{\partial_{l+1} H}(\sigma_{\mathbf{u}},\Delta X_{T_{\mathbf{k}}})\big| d\mathbf{u} |R_{k_1}|\Big)^2\Big]\\
\leq & K(\omega) \Big( \sum_{\mathbf{k} \in I_m(T)^{d-l}\backslash I_{m'}(T)^{d-l}} \big(|\Delta X_{T_{k_1}}|^{q_1-1}+|\Delta X_{T_{k_1}}|^{q_1}\big) |\Delta X_{T_{k_2}}|^{q_2}\cdots |\Delta X_{T_{k_{d-l}}}|^{q_{d-l}} \Big)^2 \\
& \to 0 \quad \text{as} \quad m,m' \to \infty
\end{align*}
for $\mathbb{P}$-almost all $\omega$, since $\sum_{s\leq t} |\Delta X_s|^p$ is almost surely finite for any $p \geq 2$. Therefore we obtain, as $m,m' \to \infty$, 
\[
\tilde{\mathbb{P}} \Big(\sup_{t\in [0,t]}|Z(m)_t - Z(m')_t| > \epsilon \Big) = \int \mathbb{P}'\Big(\sup_{t\in [0,T]}|Z^{\omega}(m)_t - Z^{\omega}(m')_t|>\epsilon\Big) d\mathbb{P}(\omega) \to 0 
\]
by the dominated convergence theorem. The processes $Z(m)$ are c\`adl\`ag and contitute a Cauchy sequence in probability in the supremum norm. Hence they converge in probability to some $\tilde{\mathcal{F}}_t$-adapted c\`adl\`ag process $Z_t$. By the previous estimates we also obtain directly that
\begin{align}\label{l2con}
Z^{\omega} (m)_t \to Z_t(\omega, \cdot) \quad \text{in} \quad L^2(\Omega',\mathcal{F}',\mathbb{P}').
\end{align}
As a consequence it follows from \eqref{var} that 
\[
\int Z_t(\omega,\omega')^2 d\mathbb{P}'(\omega') =\sum_{s_1 \leq t} \Big(\sum_{s_2,\dots,s_{d-l}\leq t} \int_{[0,t]^l} \rho_{\partial_{l+1} H}(\sigma_{\mathbf{u}},\Delta X_{s_1},\Delta X_{s_2},\dots, \Delta X_{s_{d-l}}) d\mathbf{u} \Big)^2 \sigma_{s_1}^2 .
\]
Note that the multiple sum on the right hand side of the equation converges absolutely and hence does not depend on the choice of $(T_k)$. By \eqref{l2con} we obtain
\[
\mathbb{E}'(e^{i u Z^{\omega}(m)_t}) \to \mathbb{E}' (e^{i u Z_t( \omega, \cdot)}).
\]
Observe that for any centered square integrable random variable $U$ we have
\[
\Big|  \int (e^{i y U}-1) d\mathbb{P} \Big| \leq \mathbb{E} U^2 |y|^2 \quad \text{for all} \quad  y \in \mathbb{R}.
\]
Therefore the product in \eqref{char} converges absolutely as $m \to \infty$, and hence the characteristic function and thus the law of $Z_t(\omega,\cdot)$ do not depend on the choice of the sequence $(T_k)$. Lastly, observe that $R_k$ is $\mathcal{F}$-conditionally Gaussian. (In the case of a possibly discontinuous $\sigma$ as in Theorem \ref{stabvj} we need to require that $X$ and $\sigma$ do not jump at the same time to obtain such a property.) So we can conclude that $Z^{\omega}(m)_t$ is Gaussian, and $Z_t(\omega,\cdot)$ as a stochastic limit of Gaussian random variables is so as well.
\end{subsection}
\begin{subsection}{Uniform limit theory for continuous U-statistics}\label{ustg}
In this chapter we will give a proof of Proposition \ref{ustab}. Mainly we have to show that the sequence in \eqref{clte} is tight and that the finite dimensional distributions converge to the finite dimensional distributions of $\mathbb{U}_t$. For the convergence of the finite dimensional distributions we will generalize Proposition 4.3 in \cite{PSZ}. The basic idea in that work is to write the U-statistic as an integral with respect to the empirical distribution function
\[
F_n(t,x)=\frac{1}{n}\sum_{j=1}^{\left\lfloor nt\right\rfloor} \mathbbm{1}_{\left\{\alpha_j^n \leq x \right\}}.
\] 
In our setting we have
\[
\frac{1}{n^l}\sum_{\mathbf{i} \in \mathcal{B}_t^n (l) } G(\alpha_{\mathbf{i}}^n,\mathbf{y})= \int_{\mathbb{R}^l}G(\mathbf{x},\mathbf{y}) F_n(t,dx_1)\cdots F_n(t, dx_l).
\]
Of particular importance in \cite{PSZ} is the limit theory for the empirical process connected with $F_n$, which is given by
\[
\mathbb{G}_n(t,x)=\frac{1}{\sqrt{n}}\sum_{j=1}^{\left\lfloor nt\right\rfloor} \Big(\mathbbm{1}_{\left\{ \alpha_j^n \leq x\right\}} - \Phi_{\sigma_{\frac{j-1}{n}}}(x) \Big),
\]
where $\Phi_z$ is the cumulative distribution function of a standard normal random variable with variance $z^2$. As a slight generalization of \cite[Prop. 4.2]{PSZ} and by the same arguments as in \cite[Prop. 4.4.10]{JP} we obtain the joint convergence 
\[
(\mathbb{G}_n(t,x), (R_{-}(n,p),R_{+}(n,p))_{p \geq 1}) \stab (\mathbb{G}(t,x), (R_{p-},R_{p+})_{p\geq 1}).
\]
The stable convergence in law is to be understood as a process in $t$ and in the finite distribution sense in $x \in \mathbb{R}$. The limit is defined on an extension $(\tilde{\Omega}, \tilde{\mathcal{F}}, \tilde{\mathcal{P}})$ of the original probability space. $\mathbb{G}$ is $\mathcal{F}$-conditionally independent of $(\kappa_k, \psi_{k\pm})_{k\geq1}$ and $\mathcal{F}$-conditionally Gaussian and satisfies
\begin{align*} 
\tilde{\E}[\mathbb G&(t,x)|\mathcal F] = \int_0^t \overline{\Phi}_{\sigma_s} (x) dW_s,
\\
\tilde{\E}[\mathbb G&(t_1,x_1) \mathbb G(t_2,x_2) |\mathcal F] 
- \E'[\mathbb G(t_1,x_1) |\mathcal F]  \E'[\mathbb G(t_2,x_2) |\mathcal F] =  
\\
&\int_0^{t_1 \wedge t_2} \Phi_{\sigma_s} (x_1 \wedge x_2) - 
\Phi_{\sigma_s} (x_1) \Phi_{\sigma_s} (x_2) - \overline{\Phi}_{\sigma_s} (x_1) \overline{\Phi}_{\sigma_s} (x_2)ds,
\end{align*}
where $\overline{\Phi}_{z} (x) = \E[V\mathbbm{1}_{\{zV\leq x\}}]$ with $V\sim \mathcal N(0,1)$.

As in the proof of Prop. 4.3 in \cite{PSZ} we will use the decomposition 
\begin{align*}
\mathbb{U}_t^n(G,\mathbf{y})&= \sum_{k=1}^l \int_{\mathbb{R}^l} G(\mathbf{x},\mathbf{y}) \mathbb{G}_n(t, dx_k) \prod_{m=1}^{k-1} F_n(t,dx_m) \prod_{m=k+1}^l \bar{F_n}(t,dx_m) \\
&\quad \quad \quad \quad \quad \quad + \sqrt{n} \Big(\frac{1}{n^l} \sum_{\mathbf{j} \in \mathcal{B}_t^n (l)} \rho_G(\sigma_{(\mathbf{j}-1)/n},\mathbf{y}) -\int_{[0,t]^l} \rho_G (\sigma_{\mathbf{s}},\mathbf{y}) d\mathbf{s} \Big)=:\sum_{k=1}^l Z_k^n(G,\mathbf{y}) + R^n(\mathbf{y}),
\end{align*}
where
\[
\bar{F_n}(t,x)=\frac{1}{n} \sum_{j=1}^{\left\lfloor nt\right\rfloor} \Phi_{\sigma_{(j-1)/n}}(x).
\]
From \cite[Prop. 3.2]{PSZ} we know that both $F_n$ and $\bar{F_n}$ converge in probability to $F(t,x)=\int_0^t \Phi_{\sigma_s}(x) ds$ for fixed $t$ and $x$. If $G$ is symmetric and continuously differentiable in $\mathbf{x}$ with derivative of polynomial growth, \cite[Prop. 4.3]{PSZ} gives for fixed $\mathbf{y}$
\begin{align}\label{ustat}
\sum_{k=1}^l Z_k^n(G,\mathbf{y}) \stab \sum_{k=1}^l \int_{\mathbb{R}^l} G(\mathbf{x},\mathbf{y}) \mathbb{G}(t, dx_k) \prod_{m\neq k} F(t,dx_m)=:\sum_{k=1}^l Z_k(G,\mathbf{y}).
\end{align}
We remark that the proof of this result mainly relies on the following steps: First, use the convergence of $F_n$ and $\bar{F_n}$ and replace both by their limit $F$, which is differentiable in $x$. Then use the integration by parts formula for the Riemann-Stieltjes integral with respect to $\mathbb{G}_n(t,dx_k)$ plus the differentiability of $G$ in the $k$-th argument to obtain that $Z_k^n(G, \mathbf{y})$ is asymptotically the same as $-\int_{\mathbb{R}^l} \partial_k G(\mathbf{x},\mathbf{y}) \mathbb{G}_n(t, x_k) \prod_{m \neq k} F'(t,x_m) d\mathbf{x}$. Since one now only has convergence in finite dimensional distribution of $\mathbb{G}_n(t,\cdot)$ to $\mathbb{G}(t,\cdot)$, one uses a Riemann approximation of the integral with respect to $dx_k$ and takes limits afterwards. In the end do all the steps backwards.  

From the proof and the aforementioned joint convergence of $\mathbb{G}_n$ and $(R_{\pm}(n,p))_{p \geq 1}$ it is clear that we can slightly generalize \eqref{ustat} to 
\begin{align}\label{ustat2}
\Big((Z_k^n(G,\mathbf{y}))_{1\leq k \leq l}, (R_{-}(n,p),R_{+}(n,p))_{p \geq 1}) \Big) \stab \Big((Z_k(G,\mathbf{y}))_{1\leq k \leq l},(R_{p-},R_{p+})_{p\geq 1} \Big),
\end{align}
where the latter convergence holds in the finite distribution sense in $\mathbf{y}$ and also for non-symmetric, but still continuously differentiable functions $G$. A second consequence of the proof of \eqref{ustat} is that the mere convergence $Z_k^n(G,\mathbf{y}) \stab Z_k(G,\mathbf{y})$ only requires $G$ to be continuously differentiable in the $k$-th argument if $k$ is fixed. 

To show that \eqref{ustat2} holds in general under our assumptions let $\psi_{\epsilon} \in \mathcal{C}^{\infty}(\mathbb{R})$ ($\epsilon>0$) be functions with $0\leq \psi_{\epsilon} \leq 1$, $\psi_{\epsilon}(x) \equiv 1$ on $[-\epsilon/2,\epsilon/2]$,  $\psi_{\epsilon}(x) \equiv 0$ outside of $(-\epsilon,\epsilon)$, and $\left\|\psi_{\epsilon}'\right\| \leq K \epsilon^{-1}$ for some constant $K$, which is independent of $\epsilon$. Then the function $G(\mathbf{x})(1-\psi_{\epsilon}(x_k))$ is continuously differentiable in the $k$-th argument and hence it is enough to prove
\begin{align}\label{disc}
\lim_{\epsilon \to 0} \limsup_{n \to \infty} \mathbb{P}( \sup_{\mathbf{y}\in [-A,A]^{d-l}}|Z_k^n(G \psi_{\epsilon} ,\mathbf{y})| > \eta)=0 \\
\lim_{\epsilon \to 0} \mathbb{P}( \sup_{\mathbf{y}\in [-A,A]^{d-l}}|Z_k(G \psi_{\epsilon} ,\mathbf{y})| > \eta)=0 \label{disc2}
\end{align}
for all $\eta >0$ and $1\leq k\leq l$. For given $k$ the functions $\psi_{\epsilon}$ are to be evaluated at $x_k$. We show \eqref{disc} only for $k=l$. The other cases are easier since $\bar{F_n}$ is continuously differentiable in $x$ and the derivative is bounded by a continuous function with exponential decay at $\pm \infty$ since $\sigma$ is bounded away from $0$. 

For $k=l$, some $P \in \mathfrak{P}(1), Q\in \mathfrak{P}(l-1)$ and $x_l \neq 0$, we have 
\[
|\partial_l (G(\mathbf{x},\mathbf{y})\psi_{\epsilon}(x_l))|\leq K(1+|x_l|^{p_1-1})P(x_l)Q(x_1,\dots,x_{l-1})+K|x_1|^{p_1} \cdots |x_l|^{p_l}\epsilon^{-1}.
\]
Since $p_1-1 >-1$ the latter expression is integrable with respect to $x_l$ on compact intervals. Therefore the standard rules for the Riemann-Stieltjes integral and the monotonicity of $F_n$ in $x$ yield
\begin{align*}
\sup_{\mathbf{y}\in [-A,A]^{d-l}}|Z_l^n(G \psi_{\epsilon} ,\mathbf{y})|&=\sup_{\mathbf{y}\in [-A,A]^{d-l}} \Big|\int_{\mathbb{R}^l} G(\mathbf{x},\mathbf{y}) \psi_{\epsilon}(x_l)\mathbb{G}_n(t, dx_l) \prod_{m=1}^{l-1} F_n(t,dx_m) \Big| \\
& = \sup_{\mathbf{y}\in [-A,A]^{d-l}} \Big|\int_{\mathbb{R}^l} -\mathbb{G}_n(t, x_l)\partial_l (G(\mathbf{x},\mathbf{y}) \psi_{\epsilon}(x_l)) dx_l \prod_{m=1}^{l-1} F_n(t,dx_m) \Big|\\
&\leq  \int_{\mathbb{R}^{l-1}}\int_{-\epsilon}^{\epsilon}K |\mathbb{G}_n(t, x_l)|(1+|x_l|^{p_1-1})P(x_l)Q(x_1,\dots,x_{l-1}) dx_l \prod_{m=1}^{l-1} F_n(t,dx_m)  \\
& \quad + \int_{\mathbb{R}^{l-1}}\int_{-\epsilon}^{\epsilon}K |\mathbb{G}_n(t, x_l)||x_1|^{p_1} \cdots |x_l|^{p_l}\epsilon^{-1} dx_l \prod_{m=1}^{l-1} F_n(t,dx_m) \\
&=\int_{-\epsilon}^{\epsilon}K \Big(\frac{1}{n^{l-1}} \sum_{\mathbf{i} \in \mathcal{B}_t^n (l-1) } Q(\alpha_{\mathbf{i}}^n) \Big)|\mathbb{G}_n(t, x_l)|(1+|x_l|^{p_1-1})P(x_l) dx_l  \\
& \quad + \int_{-\epsilon}^{\epsilon}K \Big(\frac{1}{n^{l-1}} \sum_{\mathbf{i} \in \mathcal{B}_t^n (l-1) } |\alpha_{i_1}^n|^{p_1}\cdots |\alpha_{i_{l-1}}^n|^{p_{l-1}} \Big)|\mathbb{G}_n(t, x_l)| |x_l|^{p_l}\epsilon^{-1} dx_l.  
\end{align*} 
We have $\mathbb{E}|\alpha_i^n|^{q} \leq K$ uniformly in $i$ for every $q \geq 0$. From \cite[Lemma 4.1]{PSZ} it further follows that $\mathbb{E} |\mathbb{G}_n(t,x)|^q \leq K$ for all $q \geq 2$. Then we deduce from H\"older inequality
\[
\mathbb{E}\Big( \sup_{\mathbf{y}\in [-A,A]^{d-l}}|Z_l^n(G \psi_{\epsilon} ,\mathbf{y})| \Big) \leq K \int_{-\epsilon}^{\epsilon} (1+|x_l|^{p_1-1})P(x_l)+|x_l|^{p_l}\epsilon^{-1} dx_l ,
\] 
which converges to $0$ if we let $\epsilon \to 0$. We omit the proof of \eqref{disc2} since it follows by the same arguments. 

So far we have proven that \eqref{ustat2} holds under our assumptions on $G$. Furthermore, we can easily calculate the conditional covariance structure of the conditionally centered Gaussian process $\sum_{k=1}^l Z_k(G,\mathbf{y})$ by simply using that we know the covariance structure of $\mathbb{G}(t,x)$. We obtain the form in \eqref{cov}; for more details see \cite[sect. 5]{PSZ}.

Next we will show that 
\begin{align}\label{rest2}
\sup_{\mathbf{y}\in [-A,A]^{d-l}} |R^n(\mathbf{y})| \toop 0
\end{align}
as $n \to \infty$. Observe that $\rho_G(\mathbf{x},\mathbf{y})$ is $\mathcal{C}^{d+1}$ in the $\mathbf{x}$ argument. Therefore we get $R^n(\mathbf{y})\toop 0$ for any fixed $\mathbf{y}$ from \cite[sect. 7.3]{PSZ}. Further we can write
\begin{align}
R^n(\mathbf{y})=\sqrt{n}  \int_{[0,\left\lfloor nt\right\rfloor/n]^l} (\rho_G(\sigma_{\left\lfloor n\mathbf{s}\right\rfloor/n},\mathbf{y}) -\rho_G (\sigma_{\mathbf{s}},\mathbf{y})) d\mathbf{s} + \sqrt{n} \Big(\int_{[0,t]^l} \rho_G (\sigma_{\mathbf{s}},\mathbf{y}) d\mathbf{s}- \int_{[0,\left\lfloor nt\right\rfloor/n]^l} \rho_G (\sigma_{\mathbf{s}},\mathbf{y}) d\mathbf{s} \Big).
\end{align}
The latter term converges almost surely to $0$ and hence we can deduce \eqref{rest2} from the fact that \linebreak $\mathbb{E}|R^n(\mathbf{y})-R^n(\mathbf{y'})|\leq K \left\|\mathbf{y}-\mathbf{y'}\right\|$, which follows because $\rho_G(\mathbf{x},\mathbf{y})$ is continuously differentiable in $\mathbf{y}$ and $\mathbb{E}(\sqrt{n} |\sigma_{\left\lfloor nu\right\rfloor/n}-\sigma_u|) \leq K$ for all $u \in [0,t]$.  

Therefore we have proven the convergence of the finite dimensional distributions
\[
((\mathbb{U}_t^n(G,\mathbf{y}_i))_{i=1}^m, (R_{-}(n,p),R_{+}(n,p))_{p \geq 1})) \stab ((\mathbb{U}_t(G,\mathbf{y}_i))_{i=1}^m, (R_{p-},R_{p+})_{p\geq 1}).
\]
What remains to be shown in order to deduce Proposition \ref{ustab} is that the limiting process is indeed continuous and that the sequences $Z_k^n(G,\cdot)$ ($1\leq k\leq l$) are tight. For the continuity of the limit observe that
\begin{align*}
\mathbb{E}[|\mathbb{U}_t(G,\mathbf{y})&-\mathbb{U}_t(G,\mathbf{y}')|^2|\mathcal{F}] \\
&=\int_0^t \Big(\int_{\mathbb{R}} \Big(\sum_{i=1}^l (f_i(u,\mathbf{y})-f_i(u,\mathbf{y'})) \Big)^2 \phi_{\sigma_s}(u) du -\Big(\sum_{i=1}^l \int_{\mathbb{R}}(f_i(u,\mathbf{y})-f_i(u,\mathbf{y'})) \phi_{\sigma_s}(u) du \Big)^2 ds\Big).
\end{align*}
Here we can use the differentiability assumptions and the boundedness of $\sigma$ and $\sigma^{-1}$ to obtain
\[
\mathbb{E}[|\mathbb{U}_t(G,\mathbf{y})-\mathbb{U}_t(G,\mathbf{y}')|^2]=\mathbb{E}[\mathbb{E}[|\mathbb{U}_t(G,\mathbf{y})-\mathbb{U}_t(G,\mathbf{y}')|^2|\mathcal{F}]]\leq K \left\|\mathbf{y}-\mathbf{y}'\right\|^2.
\]
Since $\mathbb{U}_t(G,\cdot)$ is $\mathcal{F}$-conditionally Gaussian we immediately get 
\[
\mathbb{E}[|\mathbb{U}_t(G,\mathbf{y})-\mathbb{U}_t(G,\mathbf{y}')|^p]\leq K_p \left\|\mathbf{y}-\mathbf{y}'\right\|^p
\]
for any even $p\geq 2$. In particular, this implies that there exists a continuous version of the multiparameter process $\mathbb{U}_t(G,\cdot)$ (see \cite[Theorem 2.5.1]{DK}). 

The last thing we need to show is tightness. A tightness criterion for multiparameter processes can be found in \cite{BW}. Basically we have to control the size of the increments of the process on blocks (and on lower boundaries of blocks, which works in the same way). By a block we mean a set $B \subset [-A,A]^{d-l}$ of the form $B=(y_1,y_1'] \times \dots \times (y_{d-l},y_{d-l}']$, where $y_i < y_i'$. An increment of a process $Z$ defined on $[-A,A]^{d-l}$ on such a block is defined by
\[
\Delta_B (Z) :=\sum_{i_1,\dots, i_{d-l}=0}^1 (-1)^{d-l-\sum_{j} i_j} Z(y_1+ i_1 (y_1'-y_1),\dots, y_{d-l}+i_{d-l}(y_{d-l}'-y_{d-l})).
\]  
We remark that if $Z$ is sufficiently differentiable, then 
\[
\Delta_B(Z)=\partial_1 \cdots \partial_{d-l} Z(\mathbf{\xi}) (y_1'-y_1)\cdot \ldots \cdot (y_{d-l}'-y_{d-l}) 
\]
for some $\mathbf{\xi} \in B$. We will now show tightness for the process $Z_l^n(G,\mathbf{y})$. According to \cite{BW} it is enough to show 
\[
\mathbb{E}[|\Delta_B(Z_l^n(G,\cdot))|^2]\leq K (y_1'-y_1)^2\cdot \ldots \cdot (y_{d-l}'-y_{d-l})^2
\]
in order to obtain tightness. As before we use the standard properties of the Riemann-Stieltjes integral to deduce
\begin{align*}
\mathbb{E}[|\Delta_B(Z_l^n(G,\cdot))|^2]&=\mathbb{E}\Big[\Big(\int_{\mathbb{R}^l} \Delta_B (G(\mathbf{x},\cdot)) \mathbb{G}_n(t,dx_l) \prod_{k=1}^{l-1} F_n(t,dx_k)\Big)^2 \Big]\\
&=\mathbb{E}\Big[\Big(\int_{\mathbb{R}^l} \Delta_B (\partial_l G(\mathbf{x},\cdot)) \mathbb{G}_n(t,x_l)dx_l \prod_{k=1}^{l-1} F_n(t,dx_k) \Big)^2\Big] \\
&=\mathbb{E}\Big[\Big(\int_{\mathbb{R}^l} \partial_l \partial_{l+1} \cdots \partial_d  G(\mathbf{x},\mathbf{\xi}) \mathbb{G}_n(t,x_l)dx_l \prod_{k=1}^{l-1} F_n(t,dx_k) \Big)^2\Big] \prod_{i=1}^l (y_i - y_i')^2
\end{align*}
for some $\mathbf{\xi} \in B$. As it is shown in \cite{PSZ} there exists a continuous function $\gamma :\mathbb{R} \to \mathbb{R}$ with exponential decay at $\pm \infty$ such that $\mathbb{E}[\mathbb{G}_n (t, x)^4]\leq \gamma(x)$. Using the growth assumptions on $L$ we further know that there exist $P \in \mathfrak{P}(1)$ and $Q \in \mathfrak{P}(l-1)$ such that
\[
|\partial_l \partial_{l+1} \cdots \partial_d  G(\mathbf{x},\mathbf{\xi})| \leq K(1+|x_l|^{p_l-1})P(x_l)Q(x_1,\dots, x_{l-1})
\]
and hence
\begin{align*}
&\mathbb{E}\Big[\Big(\int_{\mathbb{R}^l} \partial_l \partial_{l+1} \cdots \partial_d  G(\mathbf{x},\mathbf{\xi}) \mathbb{G}_n(t,x_l)dx_l \prod_{k=1}^{l-1} F_n(t,dx_k) \Big)^2\Big] \\
\leq & K\mathbb{E}\Big[\int_{\mathbb{R}^2} \Big(\frac{1}{n^{l-1}} \sum_{\mathbf{i} \in \mathcal{B}_t^n (l-1) } Q(\alpha_{\mathbf{i}}^n) \Big)^2 (1+|x_l|^{p_l-1})(1+|x_l'|^{p_l-1})P(x_l) P(x_l')|\mathbb{G}_n(t, x_l) \mathbb{G}_n(t,x_l')| dx_l dx_l' \Big] \leq K
\end{align*}
by Fubini, the Cauchy-Schwarz inequality, and the aforementioned properties of $\mathbb{G}_n(t,x)$. The proof for the tightness of $Z_k^n(G,\mathbf{y})$ $(1\leq k \leq l-1)$ is similar and therefore omitted. 
\end{subsection}

\begin{subsection}{Proofs of some technical results}
\textit{Proof of Proposition \ref{ap}:} \\
i) \noindent For $j>0$ consider the terms $\zeta_{k,j}^n (m)$ and $\tilde{\zeta}_{k,j}^n(m)$, which appear in decomposition \eqref{dcom}. Since $X$ is bounded and $\mathcal{P}_t^n(m)$ a finite set, we have the estimate
\[
\max(|\zeta_{k,j}^n (m)| , |\tilde{\zeta}_{k,j}^n(m)|)\leq K(m)\sqrt{n}n^{-j} \sum_{\mathbf{i} \in \mathcal{B}_t^n(l-k)} |\Delta_{i_1}^n X(m)|^p \cdots |\Delta_{i_{l-k}}^n X(m)|^p.
\]
By \eqref{ord1} we therefore obtain 
\[
\mathbb{E}(\mathbbm{1}_{\Omega_n (m)}(|\zeta_{k,j}^n (m)| +|\tilde{\zeta}_{k,j}^n(m)|)) \to 0 \quad \text{as} \quad n\to \infty.
\]
In the case $k>0$ we have 
\[
|\tilde{\zeta}_{k,j}^n(m)| \leq K(m) \sqrt{n}n^{-j-k\frac{p}{2}}  \sum_{\mathbf{p}\in \mathcal{P}_t^n(m)^k} |R(n,p_1)|^p\cdots |R(n,p_k)|^p  \sum_{\mathbf{i} \in \mathcal{B}_t^n(l-k)} |\Delta_{i_1}^n X(m)|^p \cdots |\Delta_{i_{l-k}}^n X(m)|^p.
\]
Since $(R(n,p))$ is bounded in probability as a sequence in $n$, we can deduce
\[
\mathbbm{1}_{\Omega_n (m)}|\tilde{\zeta}_{k,j}^n(m)| \toop 0 \quad \text{as} \quad n \to \infty.
\]
Furthermore, in the case $j=k=0$, we have $\zeta_{0,0}^n(m) =\tilde{\zeta}_{0,0}^n(m)$.

(ii) At last we have to show the convergence
\[
\lim_{m\to \infty} \limsup_{n\to \infty} \mathbb{P}(\mathbbm{1}_{\Omega_n(m)} |\sum_{k=1}^{l-1}  \big(\zeta_{k,0}^n(m) -  \zeta_k^n(m)\big)| >\eta) =0 \quad \text{for all} \quad \eta > 0. 
\]
First we will show in a number of steps that we can replace $\Delta X_{S_{\mathbf{p}}} + \frac{1}{\sqrt{n}} R(n,\mathbf{p})$ by $\Delta X_{S_{\mathbf{p}}}$ in $\zeta_{k,0}^n(m)$ without changing the asymptotic behaviour. Fix $k\in \left\{1,\dots, l-1 \right\}$. We start with
\begin{align*}
&\Bigg|\binom{l}{k}^{-1} \zeta_{k,0}^n(m)-\frac{\sqrt{n}}{n^{d-l}} \sum_{\textbf{p}\in \mathcal{P}_t^n(m)^{k-1} \atop p_k \in \mathcal{P}_t^n(m)} \sideset{}{'}\sum_{\textbf{i} \in \mathcal{B}_t^n (d-k) }  H\Big(\Delta X_{S_{\textbf{p}}}+\frac{1}{\sqrt{n}} R(n,\textbf{p}),\Delta X_{S_{p_k}}, \Delta_{\textbf{i}}^n X(m)\Big) \Bigg|  \\
=&\Bigg| \frac{\sqrt{n}}{n^{d-l}} \sum_{\textbf{p}\in \mathcal{P}_t^n(m)^{k-1} \atop p_k \in \mathcal{P}_t^n(m)}\sideset{}{'} \sum_{\textbf{i} \in \mathcal{B}_t^n (d-k) } \int_{0}^{\frac{R(n,p_k)}{\sqrt{n}}} \partial_k H\Big(\Delta X_{S_{\textbf{p}}}+\frac{1}{\sqrt{n}} R(n,\textbf{p}),\Delta X_{S_{p_k}}+u, \Delta_{\textbf{i}}^n X(m)\Big) du \Bigg| \\
\leq & K \sum_{\textbf{p}\in \mathcal{P}_t(m)^{k-1} \atop p_k \in \mathcal{P}_t(m)}  |R(n,p_k)| \sup_{|u|,|v|\leq \frac{|R(n,p_k)|}{\sqrt{n}}} (|\Delta X_{S_{p_k}}+u|^p+|\Delta X_{S_{p_k}}+v|^{p-1})\prod_{r=1}^{k-1} \Big|\Delta X_{S_{p_r}}+\frac{R(n,p_r)}{\sqrt{n}}\Big|^p  \\
&  \quad \quad \quad \quad \quad \quad \quad \times\sum_{\textbf{i} \in \mathcal{B}_t^n (l-k) }\prod_{j=1}^{l-k} |\Delta_{i_j}^n X(m)|^p\\
&=: K \Phi_1^n(m) \times \Phi_2^n (m).
\end{align*}
The first factor $\Phi_1^n(m)$ converges, as $n \to \infty$, stably in law towards 
\[
\Phi_1(m)=\sum_{\textbf{p}\in \mathcal{P}_t(m)^{k-1} \atop p_k \in \mathcal{P}_t(m) }  |R_{p_k}|  (|\Delta X_{S_{p_k}}|^p+|\Delta X_{S_{p_k}}|^{p-1})\prod_{r=1}^{k-1} |\Delta X_{S_{p_r}}|^p.  
\]
By the Portmanteau theorem we obtain 
\[
\limsup_{n\to \infty} \mathbb{P}(|\Phi_1^n(m)| \geq M) \leq  \mathbb{\tilde{P}}(|\Phi_1(m)| \geq M) \quad \text{for all} \quad M \in \mathbb{R_{+}},
\]
 whereas, as $m \to \infty$, 
\[
\Phi_1(m) \stackrel{\tilde{\mathbb{P}}}{\longrightarrow} \Bigg( \sum_{s\leq t} |\Delta X_s |^p \Bigg)^{k-1} \sum_{p_k \in \mathcal{P}_t}  R_{p_k}  (|\Delta X_{S_{p_k}}|^p+|\Delta X_{S_{p_k}}|^{p-1}).
\]
So it follows that
\[
\lim_{M \to \infty} \limsup_{m\to \infty} \limsup_{n\to \infty} \mathbb{P}(|\Phi_1^n(m)| \geq M)=0.
\]
Furthermore
\[
\lim_{m\to \infty} \limsup_{n\to \infty} \mathbb{E}(\mathbbm{1}_{\Omega_n(m)} \Phi_2^n(m)) \leq  \lim_{m\to \infty} \limsup_{n\to \infty} \frac{K}{m^{(l-k)(p-2)}}\mathbb{E}\bigg(\sum_{\textbf{i} \in \mathcal{B}_t^n (l-k) }\prod_{j=1}^{l-k} |\Delta_{i_j}^n X(m)|^2 \bigg)=0
\]
by Lemma  \ref{lem1}. We finally obtain 
\[
\lim_{m\to \infty} \limsup_{n\to \infty} \mathbb{P}\big(\mathbbm{1}_{\Omega_n(m)} |\Phi_1^n(m)  \Phi_2^n(m)|> \eta \big)=0 \quad \text{for all} \quad \eta >0.
\] 
Doing these steps successively in the first $k-1$ components as well, we obtain
\[
\lim_{m\to \infty} \limsup_{n\to \infty} \mathbb{P}\big(\mathbbm{1}_{\Omega_n(m)} \Big|\binom{l}{k}^{-1}\zeta_{k,0}^n(m)  -\theta_k^n(m)\Big|> \eta \big)=0 \quad \text{for all} \quad \eta >0
\]
 with
\[
\theta_k^n(m):= \frac{\sqrt{n}}{n^{d-l}}\sum_{\textbf{p}\in \mathcal{P}_t(m)^{k}} \sum_{\textbf{i} \in \mathcal{B}_t^n (d-k) }  H\Big(\Delta X_{S_{\textbf{p}}}, \Delta_{\textbf{i}}^n X(m)\Big).
\]
By the same arguments as in the proof of the convergence $\mathbbm{1}_{\Omega_n(m)} \Psi_1^n(m) \toop 0$ in section \ref{cltjc} we see that we can replace the last $d-l$ variables of $H$ in $\mathbbm{1}_{\Omega_n(m)}\theta_k^n(m)$ by $0$ without changing the limit. So we can restrict ourselves without loss of generality to the case $l=d$ now and have to prove 
\begin{align}\label{conv}
\lim_{m \to \infty} \limsup_{n \to \infty} \mathbb{P}(\mathbbm{1}_{\Omega_n(m)} |\Theta_k^n(m)| > \eta) =0
\end{align}
with
\begin{align*}
\Theta_k^n(m) :=\sqrt{n} \sum_{\textbf{p}\in \mathcal{P}_t(m)^{k}} \Big(\sum_{\textbf{i} \in \mathcal{B}_t^n (d-k) }  H\Big(\Delta X_{S_{\textbf{p}}}, \Delta_{\textbf{i}}^n X(m)\Big) -\sum_{\mathbf{s} \in (0,\frac{\left\lfloor nt\right\rfloor}{n}]^{d-k}}  H\Big(\Delta X_{S_{\textbf{p}}}, \Delta X(m)_{\mathbf{s}}\Big)\Big).
\end{align*}
Since 
\[
\sum_{q \in \mathcal{P}_t(m)} |\Delta X_{S_q}|^p \leq \sum_{s\leq t} |\Delta X_s|^p 
\]
is bounded in probability, we can adopt exactly the same method as in the proof of $\mathbbm{1}_{\Omega_n(m)} \Psi_2^n(m) \toop 0$ to show \eqref{conv}, which finishes the proof of Proposition \ref{ap}. \qed

\noindent \textit{Proof of Proposition \ref{appal}:} \\
We will only show that we can replace $\sqrt{n}\Delta_i^n X^c$ by $\alpha_i^n$ in the first argument, i.e.\ the convergence
\begin{align}\label{appal2}
\zeta_n:=\frac{\sqrt{n}}{n^l}\sum_{k=1}^{\left\lfloor nt\right\rfloor}\sum_{\textbf{i}\in \mathcal{B}_t^n (l-1)}\sum_{\textbf{j}\in \mathcal{B}_t^n (d-l)}\Big( H(\sqrt{n}\Delta_k^n X^c,\sqrt{n}\Delta_{\mathbf{i}}^n X^c,\Delta_{\mathbf{j}}^n X) -H(\alpha_k^n,\sqrt{n}\Delta_{\mathbf{i}}^n X^c,\Delta_{\mathbf{j}}^n X) \Big) \toop 0.
\end{align}
All the other replacements follow in the same manner. Define the function $g:\mathbb{R}^d \to \mathbb{R}$ by $g(w,\mathbf{x},\mathbf{y})=|w|^{p_1} L(w,\mathbf{x},\mathbf{y})$. In a first step we will show that, for fixed $M>0$, we have
\begin{align}\label{supcon}
\frac{1}{\sqrt{n}}\sup_{\left\|\mathbf{z}\right\| \leq M} \sum_{k=1}^{\left\lfloor nt\right\rfloor}\big(g(\sqrt{n}\Delta_k^n X^c, \mathbf{z})-g(\alpha_k^n, \mathbf{z})\big) \toop 0,   
\end{align}
where $\mathbf{z}=(\mathbf{x},\mathbf{y})\in \mathbb{R}^{l-1}\times \mathbb{R}^{d-l}$. Note that our growth assumptions on $L$ imply the existence of constants $h,h',h'' \geq 0$ such that
\begin{align}
w \neq 0 \Longrightarrow &|\partial_1 g(w, \mathbf{x}, \mathbf{y})| \leq K u(\mathbf{y}) (1+\left\|(w,\mathbf{x})\right\|^h)\Big(1+|w|^{p_1-1} \Big) \label{incr1}\\
w \neq 0, |z| \leq |w|/2 \Longrightarrow &|\partial_1 g(w+z, \mathbf{x}, \mathbf{y})-\partial_1 g(w, \mathbf{x}, \mathbf{y})| \leq Ku(\mathbf{y})|z|(1+\left\|(w,\mathbf{x})\right\|^{h'}+|z|^{h'}) \Big( 1+|w|^{p_1-2}\Big) \label{incr2}\\
&|g(w+z, \mathbf{x}, \mathbf{y})-g(w, \mathbf{x}, \mathbf{y})| \leq Ku(\mathbf{y})(1+\left\|(w,\mathbf{x})\right\|^{h''})|z|^{p_1} \label{incr3}
\end{align}
The first inequality is trivial, the second one follows by using the mean value theorem, and the last one can be deduced by the same arguments as in the derivation of \eqref{ineq}.
In particular, for fixed $\mathbf{x}, \mathbf{y}$ all assumptions of \cite[Theorem 5.3.6]{JP} are fulfilled and hence
\begin{align*}
\frac{1}{\sqrt{n}}\max_{\mathbf{z} \in K_m(M)} \sum_{k=1}^{\left\lfloor nt\right\rfloor}\big(g(\sqrt{n}\Delta_k^n X^c, \mathbf{z})-g(\alpha_k^n, \mathbf{z})\big) \toop 0,   
\end{align*}
where $K_m(M)$ is defined to be a finite subset of $[-M,M]^{d-1}$ such that for each $\mathbf{z} \in [-M,M]^{d-l}$ there exists $\mathbf{z}'\in K_m(M)$ with $\left\|\mathbf{z}-\mathbf{z}'\right\| \leq 1/m$. In order to show \eqref{supcon} it is therefore enough to prove
\[
\frac{1}{\sqrt{n}}\sup_{\left\|(\mathbf{z_1},\mathbf{z_2})\right\| \leq M \atop \left\|\mathbf{z}_1-\mathbf{z}_2\right\|\leq 1/m} \Big|\sum_{k=1}^{\left\lfloor nt\right\rfloor}\big(g(\sqrt{n}\Delta_k^n X^c, \mathbf{z}_1)-g(\alpha_k^n, \mathbf{z}_1)-(g(\sqrt{n}\Delta_k^n X^c, \mathbf{z}_2)-g(\alpha_k^n, \mathbf{z}_2))\big)\Big| \toop 0
\]
if we first let $n$ and then $m$ go to infinity. 

Now, let $\theta_k^n =\sqrt{n} \Delta_k^n X^c - \alpha_k^n$ and $B_k^n=\left\{|\theta_k^n| \leq |\alpha_k^n|/2 \right\}$. Clearly, $g$ is differentiable in the last $d-1$ arguments and on $B_k^n$ we can also apply the mean value theorem in the first argument. We therefore get
\begin{align*}
\mathbbm{1}_{B_k^n}\big(g(\sqrt{n}\Delta_k^n X^c, \mathbf{z}_1)-g(\alpha_k^n, \mathbf{z}_1)-(g(\sqrt{n}\Delta_k^n X^c, \mathbf{z}_2)-g(\alpha_k^n, \mathbf{z}_2))\big) =\sum_{j=2}^d \mathbbm{1}_{B_k^n}\partial_1 \partial_j g(\chi_{j,k}^n, \mathbf{\xi}_{j,k}^n)(z_2^{(j)}-z_1^{(j)})\theta_k^n,
\end{align*}
where $\chi_{j,k}^n$ is between $\sqrt{n}\Delta_k^n X^c$ and $\alpha_k^n$ and $\xi_{j,k}^n$ is between $\mathbf{z}_1$ and $\mathbf{z}_2$. $z_i^{(j)}$ stands for the $j$-th component of $\mathbf{z}_i$. We have $|\partial_1 \partial_j g(w,\mathbf{z})|\leq p_1|w|^{p_1-1} |\partial_j L(w, \mathbf{z})|+ |w|^{p_1} |\partial_1 \partial_j L(w,\mathbf{z})|$ and therefore the growth conditions on $L$ imply that there exists $q \geq 0$ such that 
\[
|\partial_1 \partial_j g(w,\mathbf{z})| \leq Ku(\mathbf{y}) (1+|w|^{p_1-1})(1+\left\|(w,\mathbf{x})\right\|^q).
\]
On $B_k^n$ we have $|\chi_{j,k}^n| \leq \frac{3}{2} |\alpha_k^n|$. From $\left\|\mathbf{z}\right\| \leq M$ we find  
\begin{align*}
&\frac{1}{\sqrt{n}}\mathbb{E}\Bigg(\sup_{\left\|(\mathbf{z_1},\mathbf{z_2})\right\| \leq M \atop \left\|\mathbf{z}_1-\mathbf{z}_2\right\|\leq 1/m} \Big|\sum_{k=1}^{\left\lfloor nt\right\rfloor}\mathbbm{1}_{B_k^n}\big(g(\sqrt{n}\Delta_k^n X^c, \mathbf{z}_1)-g(\alpha_k^n, \mathbf{z}_1)-(g(\sqrt{n}\Delta_k^n X^c, \mathbf{z}_2)-g(\alpha_k^n, \mathbf{z}_2))\big)\Big|\Bigg) \\
\leq &  \frac{K(M)}{\sqrt{n}m}\sum_{k=1}^{\left\lfloor nt\right\rfloor}\mathbb{E} \Big( (1+|\alpha_k^n|^{p_1-1})(1+|\alpha_k^n|^{q}+|\sqrt{n}\Delta_k^n X^c|^q) |\theta_k^n|\Big).
\end{align*}
By Burkholder inequality we know that $\mathbb{E}\big((1+|\alpha_k^n|^{q}+|\sqrt{n}\Delta_k^n X^c|^q)^u\big) \leq K$ for all $u\geq 0$. Since $\sigma$ is a continuous semimartingale we further have $\mathbb{E} (|\theta_k^n|^u) \leq Kn^{-u/2}$ for $u \geq 1$. Finally, because $\sigma$ is bounded away from $0$, we also have $\mathbb{E}\big( (|\alpha_k^n|^{p_1-1})^u\big) \leq K$ for all $u \geq 0$ with $u(1-p_1)<1$. Using this results in combination with H\"older inequality we obtain
\[
\frac{K(M)}{\sqrt{n}m}\sum_{k=1}^{\left\lfloor nt\right\rfloor}\mathbb{E} \Big( (1+|\alpha_k^n|^{p_1-1})(1+|\alpha_k^n|^{q}+|\sqrt{n}\Delta_k^n X^c|^q) |\theta_k^n|\Big) \leq \frac{K(M)}{m},
\]
which converges to $0$ as $m \to \infty$.

Now we focus on $(B_k^n)^C$. Let $2\leq j\leq d$. Observe that, similarly to \eqref{ineq}, by distinguishing the cases $|z|\leq 1$ and $|z|>1$, we find that 
\[
|\partial_j L(w+z, \mathbf{x}, \mathbf{y})- \partial_j L(w,\mathbf{x},\mathbf{y})| \leq K(1+|w|^{\gamma_j+\gamma_{1j}})|z|^{\gamma_j}. 
\]
We used here that $\left\|(\mathbf{x},\mathbf{y})\right\|$ is bounded and the simple inequality $1+a+b \leq 2(1+a)b$ for all $a\geq 0, b\geq 1$. From this we get
\begin{align*}
&|\partial_j g (w+z, \mathbf{x},\mathbf{y})-\partial_j g(w,\mathbf{x},\mathbf{y})| \\
\leq  &\Big||w+z|^{p_1}-|w|^{p_1}\Big| |\partial_j L(w+z,\mathbf{x},\mathbf{y})|+|w|^{p_1}|\partial_j L(w+z, \mathbf{x}, \mathbf{y})- \partial_j L(w,\mathbf{x},\mathbf{y})| \\
\leq & K (1+|w|^q)(|z|^{\gamma_j + p_1}+|z|^{\gamma_j})
\end{align*}
 for some $q\geq 0$. Recall that $\gamma_j <1$ and $\gamma_j + p_1 <1$ by assumption. For some $\xi_j^n$ between $z_1^{(j)}$ and $z_2^{(j)}$ we therefore have
\begin{align*}
& \frac{1}{\sqrt{n}}\mathbb{E}\Bigg(\sup_{\left\|(\mathbf{z_1},\mathbf{z_2})\right\| \leq M \atop \left\|\mathbf{z}_1-\mathbf{z}_2\right\|\leq 1/m} \Big|\sum_{k=1}^{\left\lfloor nt\right\rfloor}\mathbbm{1}_{(B_k^n)^C}\big(g(\sqrt{n}\Delta_k^n X^c, \mathbf{z}_1)-g(\alpha_k^n, \mathbf{z}_1)-(g(\sqrt{n}\Delta_k^n X^c, \mathbf{z}_2)-g(\alpha_k^n, \mathbf{z}_2))\big)\Big|\Bigg) \\
=& \frac{1}{\sqrt{n}}\mathbb{E}\Bigg(\sup_{\left\|(\mathbf{z_1},\mathbf{z_2})\right\| \leq M \atop \left\|\mathbf{z}_1-\mathbf{z}_2\right\|\leq 1/m} \Big|\sum_{k=1}^{\left\lfloor nt\right\rfloor}\sum_{j=2}^d \mathbbm{1}_{(B_k^n)^C}\big(\partial_j g(\sqrt{n}\Delta_k^n X^c, \xi_j^n)-\partial_j g(\alpha_k^n, \xi_j^n)\big)(z_2^{(j)}-z_1^{(j)})\Big|\Bigg) \\
\leq &\frac{K(M)}{\sqrt{n}m} \sum_{k=1}^{\left\lfloor nt\right\rfloor}\mathbb{E}\Big(\mathbbm{1}_{(B_k^n)^C}(1+|\alpha_k^n|^{q}+|\sqrt{n}\Delta_k^n X^c|^q) (|\theta_k^n|^{\gamma_1}+|\theta_k^n|^{\gamma_j +p_1})  \Big) \\
\leq &\frac{K(M)}{\sqrt{n}m} \sum_{k=1}^{\left\lfloor nt\right\rfloor}\mathbb{E}\Big(\mathbbm{1}_{(B_k^n)^C}(1+|\alpha_k^n|^{q}+|\sqrt{n}\Delta_k^n X^c|^q) \Big(\frac{|\theta_k^n|}{|\alpha_k^n|^{1-\gamma_1}}+\frac{|\theta_k^n|}{|\alpha_k^n|^{1-(\gamma_j + p_1)}} \Big) \Big) \leq \frac{K(M)}{m}
\end{align*}
by the same arguments as before, and hence \eqref{supcon} holds. For any $M>2A$ we therefore have (with $\mathbf{q}=(q_1,\dots,q_{d-l})$)
\begin{align*}
&\Big|\frac{\sqrt{n}}{n^l}\sum_{k=1}^{\left\lfloor nt\right\rfloor}\sum_{\textbf{i}\in \mathcal{B}_t^n (l-1)}\sum_{\textbf{j}\in \mathcal{B}_t^n (d-l)}\mathbbm{1}_{\left\{\left\|\sqrt{n}\Delta_{\mathbf{i}}^n X^c\right\| \leq M \right\}}\Big( H(\sqrt{n}\Delta_k^n X^c,\sqrt{n}\Delta_{\mathbf{i}}^n X^c,\Delta_{\mathbf{j}}^n X) -H(\alpha_k^n,\sqrt{n}\Delta_{\mathbf{i}}^n X^c,\Delta_{\mathbf{j}}^n X) \Big)\Big|\\
\leq &  \Big( \frac{1}{n^{l-1}} \sum_{\textbf{i}\in \mathcal{B}_t^n (l-1)}\sum_{\textbf{j}\in \mathcal{B}_t^n (d-l)}|\sqrt{n}\Delta_{i_1}X^c|^{p_2}\cdots |\sqrt{n}\Delta_{i_{l-1}}X^c|^{p_l} |\Delta_{\mathbf{j}}^n X|^{\mathbf{q}}\Big) \Big| \frac{1}{\sqrt{n}}\sup_{\left\|\mathbf{z}\right\| \leq M} \sum_{k=1}^{\left\lfloor nt\right\rfloor}\big(g(\sqrt{n}\Delta_k^n X^c, \mathbf{z})-g(\alpha_k^n, \mathbf{z})\big) \Big|
\end{align*}
The first factor converges in probability to some finite limit, and hence the whole expression converges to $0$ by \eqref{supcon}. In order to show \eqref{appal2} we are therefore left with proving
\[
\frac{\sqrt{n}}{n^l}\sum_{k=1}^{\left\lfloor nt\right\rfloor}\sum_{\textbf{i}\in \mathcal{B}_t^n (l-1)}\sum_{\textbf{j}\in \mathcal{B}_t^n (d-l)}\mathbbm{1}_{\left\{\left\|\sqrt{n}\Delta_{\mathbf{i}}^n X^c\right\| > M \right\}}\Big( H(\sqrt{n}\Delta_k^n X^c,\sqrt{n}\Delta_{\mathbf{i}}^n X^c,\Delta_{\mathbf{j}}^n X) -H(\alpha_k^n,\sqrt{n}\Delta_{\mathbf{i}}^n X^c,\Delta_{\mathbf{j}}^n X) \Big) \toop 0,
\] 
if we first let $n$ and then $M$ go to infinity. As before we will distinguish between the cases that we are on the set $B_k^n$ and on $(B_k^n)^C$. Let $\tilde{\mathbf{p}}=(p_2,\dots,p_l)$. With the mean value theorem and the growth properties of $\partial_1 g$ from \eqref{incr1} we obtain for all $M\geq1$: 
\begin{align*}
&\Big| \frac{\sqrt{n}}{n^l}\sum_{k=1}^{\left\lfloor nt\right\rfloor}\sum_{\textbf{i}\in \mathcal{B}_t^n (l-1)}\sum_{\textbf{j}\in \mathcal{B}_t^n (d-l)}\mathbbm{1}_{\left\{\left\|\sqrt{n}\Delta_{\mathbf{i}}^n X^c\right\| > M \right\}}\mathbbm{1}_{B_k^n}\Big( H(\sqrt{n}\Delta_k^n X^c,\sqrt{n}\Delta_{\mathbf{i}}^n X^c,\Delta_{\mathbf{j}}^n X) -H(\alpha_k^n,\sqrt{n}\Delta_{\mathbf{i}}^n X^c,\Delta_{\mathbf{j}}^n X) \Big) \Big| \\
\leq &K\Big(\sum_{\textbf{j}\in \mathcal{B}_t^n (d-l)} |\Delta_{\mathbf{j}}^n X|^{\mathbf{q}} \Big) \frac{\sqrt{n}}{n^l}\sum_{k=1}^{\left\lfloor nt\right\rfloor}\sum_{\textbf{i}\in \mathcal{B}_t^n (l-1)} |\sqrt{n}\Delta_{\mathbf{i}}^n X^c|^{\tilde{\mathbf{p}}} \mathbbm{1}_{\left\{\left\|\sqrt{n}\Delta_{\mathbf{i}}^n X^c\right\| > M \right\}}\\
& \quad \quad \quad \quad \quad \quad \quad \quad \quad \quad \quad \quad \quad \quad \quad \quad \quad \quad \times (1+|\alpha_k^n|^h+|\sqrt{n}\Delta_k^n X^c|^h+ \left\|\sqrt{n}\Delta_{\mathbf{i}}^n X^c\right\|^h)\big(1+|\alpha_k^n|^{p_1-1} \big)|\theta_k^n|\\
& \leq \Big(K\sum_{\textbf{j}\in \mathcal{B}_t^n (d-l)} |\Delta_{\mathbf{j}}^n X|^{\mathbf{q}} \Big) \Big(\frac{1}{n^{l-1}} \sum_{\textbf{i}\in \mathcal{B}_t^n (l-1)}\mathbbm{1}_{\left\{\left\|\sqrt{n}\Delta_{\mathbf{i}}^n X^c\right\| > M \right\}} |\sqrt{n}\Delta_{\mathbf{i}}^n X^c|^{\tilde{\mathbf{p}}}\left\|\sqrt{n}\Delta_{\mathbf{i}}^n X^c\right\|^h\Big)\\
& \quad \quad \quad \quad \quad \quad \quad \quad \quad \quad \quad \quad \quad \quad \quad \quad \quad \quad \times \Big(\frac{1}{\sqrt{n}}\sum_{k=1}^{\left\lfloor nt\right\rfloor} (1+|\alpha_k^n|^h+|\sqrt{n}\Delta_k^n X^c|^h)\big(1+|\alpha_k^n|^{p_1-1} \big)|\theta_k^n|\Big)\\
&=:A_n  B_n(M)  C_n,
\end{align*}
where we used $M\geq 1$ and $1+a+b \leq 2(1+a)b$ for the final inequality again. As before, we deduce that $A_n$ is bounded in probability and $\mathbb{E}(C_n)\leq K$. We also have $\mathbb{E}(B_n(M))\leq K/M$ and hence $\lim_{M\to \infty} \limsup_{n\to \infty} \mathbb{P}(A_n B_n(M) C_n >\eta) =0$ for all $\eta > 0$. Again, with \eqref{incr3}, we derive for $M\geq 1$
\begin{align*}
&\Big| \frac{\sqrt{n}}{n^l}\sum_{k=1}^{\left\lfloor nt\right\rfloor}\sum_{\textbf{i}\in \mathcal{B}_t^n (l-1)}\sum_{\textbf{j}\in \mathcal{B}_t^n (d-l)}\mathbbm{1}_{\left\{\left\|\sqrt{n}\Delta_{\mathbf{i}}^n X^c\right\| > M \right\}}\mathbbm{1}_{(B_k^n)^C}\Big( H(\sqrt{n}\Delta_k^n X^c,\sqrt{n}\Delta_{\mathbf{i}}^n X^c,\Delta_{\mathbf{j}}^n X) -H(\alpha_k^n,\sqrt{n}\Delta_{\mathbf{i}}^n X^c,\Delta_{\mathbf{j}}^n X) \Big) \Big| \\
&=  \frac{\sqrt{n}}{n^l}\sum_{k=1}^{\left\lfloor nt\right\rfloor}\sum_{\textbf{i}\in \mathcal{B}_t^n (l-1)}\sum_{\textbf{j}\in \mathcal{B}_t^n (d-l)}\mathbbm{1}_{\left\{\left\|\sqrt{n}\Delta_{\mathbf{i}}^n X^c\right\| > M \right\}}\mathbbm{1}_{(B_k^n)^C} |\Delta_{\mathbf{j}}^n X|^{\mathbf{q}}|\sqrt{n}\Delta_{\mathbf{i}}^n X^c|^{\tilde{\mathbf{p}}} \\
& \quad \quad \quad \quad \quad \quad \quad \quad \quad \quad \quad \quad \quad \quad \quad \quad \quad \quad \times \Big| g(\alpha_k^n +\theta_k^n,\sqrt{n}\Delta_{\mathbf{i}}^n X^c,\Delta_{\mathbf{j}}^n X) -g(\alpha_k^n,\sqrt{n}\Delta_{\mathbf{i}}^n X^c,\Delta_{\mathbf{j}}^n X)  \Big|\\
& \leq K\Big(\sum_{\textbf{j}\in \mathcal{B}_t^n (d-l)} |\Delta_{\mathbf{j}}^n X|^{\mathbf{q}} \Big) \Big(\frac{1}{n^{l-1}} \sum_{\textbf{i}\in \mathcal{B}_t^n (l-1)}\mathbbm{1}_{\left\{\left\|\sqrt{n}\Delta_{\mathbf{i}}^n X^c\right\| > M \right\}} |\sqrt{n}\Delta_{\mathbf{i}}^n X^c|^{\tilde{\mathbf{p}}}\left\|\sqrt{n}\Delta_{\mathbf{i}}^n X^c\right\|^{h''}\Big)\\
& \quad \quad \quad \quad \quad \quad \quad \quad \quad \quad \quad \quad \quad \quad \quad \quad \quad \quad \times \Big(\frac{1}{\sqrt{n}}\sum_{k=1}^{\left\lfloor nt\right\rfloor} \mathbbm{1}_{(B_k^n)^C}(1+|\alpha_k^n|^{h''})|\theta_k^n|^{p_1}\Big)\\
&\leq K \Big(\sum_{\textbf{j}\in \mathcal{B}_t^n (d-l)} |\Delta_{\mathbf{j}}^n X|^{\mathbf{q}} \Big) \Big(\frac{1}{n^{l-1}} \sum_{\textbf{i}\in \mathcal{B}_t^n (l-1)}\mathbbm{1}_{\left\{\left\|\sqrt{n}\Delta_{\mathbf{i}}^n X^c\right\| > M \right\}} |\sqrt{n}\Delta_{\mathbf{i}}^n X^c|^{\tilde{\mathbf{p}}}\left\|\sqrt{n}\Delta_{\mathbf{i}}^n X^c\right\|^{h''}\Big)\\
& \quad \quad \quad \quad \quad \quad \quad \quad \quad \quad \quad \quad \quad \quad \quad \quad \quad \quad \times \Big(\frac{1}{\sqrt{n}}\sum_{k=1}^{\left\lfloor nt\right\rfloor} (1+|\alpha_k^n|^{h''})|\alpha_k^n|^{p_1-1}|\theta_k^n|\Big).
\end{align*}
For the last step, recall that $|\theta_k^n|^{1-p_1} \leq K |\alpha_k^n|^{1-p_1}$ on the set $(B_k^n)^C$. Once again, the final random variable converges to $0$ if we first let $n$ and then $M$ to infinity. \qed

\noindent \textit{Proof of Proposition \ref{appsp}:}
We will give a proof only in the case $d=2$ and $l=1$. We use the decomposition
\begin{align*}
&\mathbbm{1}_{\Omega_n(m)}\theta_n(H) \\
=&  \frac{\mathbbm{1}_{\Omega_n(m)}}{\sqrt{n}}\Big(\sum_{i,j=1}^{\left\lfloor nt\right\rfloor} H(\alpha_i^n, \Delta_j^n X(m)) -\sum_{i=1}^{\left\lfloor nt\right\rfloor} \sum_{s \leq \frac{\left\lfloor nt\right\rfloor}{n}} H(\alpha_i^n, \Delta X(m)_s) \Big)- \frac{\mathbbm{1}_{\Omega_n(m)}}{\sqrt{n}}\sum_{i=1}^{\left\lfloor nt\right\rfloor} \sum_{\frac{\left\lfloor nt\right\rfloor}{n}<s \leq t} H(\alpha_i^n, \Delta X_s) \\
+ & \frac{\mathbbm{1}_{\Omega_n(m)}}{\sqrt{n}} \sum_{i=1}^{\left\lfloor nt \right\rfloor} \sum_{p \in \mathcal{P}_t^n(m)} \left\{ H\big(\alpha_i^n, \Delta X_{S_p}+n^{-1/2}R(n,p)\big)-H\big(\alpha_i^n, n^{-1/2}R(n,p)\big)\right\} - \frac{\mathbbm{1}_{\Omega_n(m)}}{\sqrt{n}}\sum_{i=1}^{\left\lfloor nt \right\rfloor} \sum_{p \in \mathcal{P}_t^n(m)} H(\alpha_i^n, \Delta X_{S_p}) \\
=:& \theta_n^{(1)}(H)-\theta_n^{(2)}(H)+\theta_n^{(3)}(H)-\theta_n^{(4)}(H).  
\end{align*}
In the general case we would have to use the decomposition given in \eqref{dcom} for the last $d-l$ arguments.
We first show that we have 
\begin{align}\label{appju}
\lim_{m \to \infty} \limsup_{n \to \infty} \mathbb{P}(|\theta_n^{(1)}(H)| >\eta)=0 \quad \text{for all} \quad \eta >0. 
\end{align}
We do this in two steps. 

\noindent a) Let $\phi_k$ be a function in $\mathcal{C}^{\infty}(\mathbb{R}^2)$ with $0\leq \phi_k \leq 1$, $\phi_k \equiv 1$ on $[-k,k]^2$, and $\phi_k \equiv 0$ outside of $[-2k,2k]^2$. Also, let $\tilde{g}:\mathbb{R}^2 \to \mathbb{R}$ be defined by $\tilde{g}(x,y)=|y|^{q_1}L(x,y)$ and set $H_k=\phi_k H$ and $\tilde{g_k}=\phi_k \tilde{g}$. Then we have
\begin{align*}
|\theta_n^{(1)}(H_k)|&= \Big|\frac{\mathbbm{1}_{\Omega_n(m)}}{\sqrt{n}} \sum_{i=1}^{\left\lfloor nt\right\rfloor} |\alpha_i^n|^{p_1}\Big(\sum_{j=1}^{\left\lfloor nt\right\rfloor} \tilde{g_k}(\alpha_i^n , \Delta_j^n X(m)) - \sum_{s \leq \frac{\left\lfloor nt\right\rfloor}{n}} \tilde{g_k}(\alpha_i^n , \Delta X(m)_s )\Big)    \Big|\\
&\leq \Big|\frac{\mathbbm{1}_{\Omega_n(m)}}{\sqrt{n}} \sum_{i=1}^{\left\lfloor nt\right\rfloor} |\alpha_i^n|^{p_1}\Big(\sum_{j=1}^{\left\lfloor nt\right\rfloor} \tilde{g_k}(0 , \Delta_j^n X(m)) - \sum_{s \leq \frac{\left\lfloor nt\right\rfloor}{n}} \tilde{g_k}(0 , \Delta X(m)_s )\Big)    \Big| \\
& \quad \quad +\Big|\frac{\mathbbm{1}_{\Omega_n(m)}}{\sqrt{n}} \sum_{i=1}^{\left\lfloor nt\right\rfloor} |\alpha_i^n|^{p_1}\Big(\sum_{j=1}^{\left\lfloor nt\right\rfloor} \int_0^{\alpha_i^n}\partial_1\tilde{g_k}(u , \Delta_j^n X(m)) du- \sum_{s \leq \frac{\left\lfloor nt\right\rfloor}{n}} \int_0^{\alpha_i^n} \partial_1\tilde{g_k}(u , \Delta X(m)_s ) du\Big)    \Big| \\
& \leq \Big(\frac{1}{n} \sum_{i=1}^{\left\lfloor nt\right\rfloor} |\alpha_i^n|^{p_1} \Big) \Big( \sqrt{n}\mathbbm{1}_{\Omega_n(m)}\Big|\sum_{j=1}^{\left\lfloor nt\right\rfloor} \tilde{g}_k(0 , \Delta_j^n X(m)) - \sum_{s \leq \frac{\left\lfloor nt\right\rfloor}{n}} \tilde{g}_k(0 , \Delta X(m)_s )\Big| \Big)  \\
& \quad \quad +\Big(\frac{1}{n} \sum_{i=1}^{\left\lfloor nt\right\rfloor} |\alpha_i^n|^{p_1}\Big) \Big(\mathbbm{1}_{\Omega_n(m)} \int_{-k}^{k} \sqrt{n}\Big|\sum_{j=1}^{\left\lfloor nt\right\rfloor}\partial_1\tilde{g}_k(u , \Delta_j^n X(m)) - \sum_{s \leq \frac{\left\lfloor nt\right\rfloor}{n}}  \partial_1\tilde{g}_k(u , \Delta X(m)_s )\Big| du\Big) ,   
\end{align*}
which converges to zero in probability by Lemma \ref{lem2}, if we first let $n \to \infty$ and then $m\to \infty$, since 
\[
\frac{1}{n} \sum_{i=1}^{\left\lfloor nt\right\rfloor} |\alpha_i^n|^{p_1}
\]
is bounded in probability by Burkholder inequality.

\noindent b) In this part we show
\[
\lim_{k \to \infty} \lim_{m\to \infty} \limsup_{n \to \infty} \mathbb{P}(|\theta_n^{(1)}(H)-\theta_n^{(1)}(H_k)| >\eta )=0 \quad \text{for all} \quad \eta >0.
\]
Observe that we automatically have $|\Delta_i^n X(m)| \leq k$ for some $k$ large enough. Therefore,
\begin{align*}
|\theta_n^{(1)}(H)-\theta_n^{(1)}(H_k)|&= \Big| \frac{\mathbbm{1}_{\Omega_n(m)}}{\sqrt{n}} \sum_{i,j=1}^{\left\lfloor nt\right\rfloor}\Big( H(\alpha_i^n,\Delta_j^n X(m))- H_k (\alpha_i^n,\Delta_j^n X(m))\Big) \Big| \\
&\leq \frac{\mathbbm{1}_{\Omega_n(m)}}{\sqrt{n}}  \sum_{i,j=1}^{\left\lfloor nt\right\rfloor}\mathbbm{1}_{\left\{ |\alpha_i^n|>k\right\}}\Big| H(\alpha_i^n,\Delta_j^n X(m))- H_k (\alpha_i^n,\Delta_j^n X(m))\Big|\\
& \leq \frac{\mathbbm{1}_{\Omega_n(m)}}{\sqrt{n}}  \sum_{i,j=1}^{\left\lfloor nt\right\rfloor}\mathbbm{1}_{\left\{ |\alpha_i^n|>k\right\}}\Big| H(\alpha_i^n,\Delta_j^n X(m))\Big| \\
& \leq \frac{K\mathbbm{1}_{\Omega_n(m)}}{\sqrt{n}}  \sum_{i,j=1}^{\left\lfloor nt\right\rfloor}\mathbbm{1}_{\left\{ |\alpha_i^n|>k\right\}} \big|(1+|\alpha_i^n|^{p_1})( \Delta_j^n X(m) )^{q_1}\big| \\
& \leq \frac{K}{\sqrt{n}}\Big(\sum_{i=1}^{\left\lfloor nt\right\rfloor}\mathbbm{1}_{\left\{ |\alpha_i^n|>k\right\}} (1+|\alpha_i^n|^{p_1})\Big) \Big(\mathbbm{1}_{\Omega_n(m)}\sum_{j=1}^{\left\lfloor nt\right\rfloor} \big| \Delta_j^n X(m) \big|^{q_1} \Big)\\
& \leq K\Big(\sum_{i=1}^{\left\lfloor nt\right\rfloor}\mathbbm{1}_{\left\{ |\alpha_i^n|>k\right\}}\Big)^{\frac{1}{2}} \Big(\frac{1}{n}\sum_{i=1}^{\left\lfloor nt\right\rfloor}(1+|\alpha_i^n|^{p_1})^2\Big)^{\frac{1}{2}} \Big(\mathbbm{1}_{\Omega_n(m)}\sum_{j=1}^{\left\lfloor nt\right\rfloor} \big| \Delta_j^n X(m) \big|^{q_1} \Big)
\end{align*}
Now observe that we have
\[
\Big(\mathbbm{1}_{\Omega_n(m)}\sum_{j=1}^{\left\lfloor nt\right\rfloor} \big| \Delta_j^n X(m) \big|^{q_1} \Big) \toop \sum_{s \leq t} |\Delta X_s|^{q_1},
\] 
if we first let $n \to \infty$ and then $m \to \infty$. Further we have  
\[
\mathbb{E} \big[ \frac{1}{n}\sum_{i=1}^{\left\lfloor nt\right\rfloor}(1+|\alpha_i^n|^{p_1})^2 \big]\leq K
\]
by Burkholder inequality and finally
\begin{align*}
\mathbb{P}\Big( \Big| \sum_{i=1}^{\left\lfloor nt\right\rfloor}\mathbbm{1}_{\left\{ |\alpha_i^n|>k\right\}}  \Big| > \eta \Big) &\leq \frac{1}{\eta}\mathbb{E}\Big( \sum_{i=1}^{\left\lfloor nt\right\rfloor}\mathbbm{1}_{\left\{ |\alpha_i^n|>k\right\}} \Big) \leq \sum_{i=1}^{\left\lfloor nt\right\rfloor} \frac{\mathbb{E}[|\alpha_i^n|^2]}{\eta k^2} \leq \frac{K}{\eta k^2} \to 0,
\end{align*}
as $k \to \infty$. For $\theta_n^{(2)}(H)$ we have
\begin{align*}
|\theta_n^{(2)}(H)| \leq \frac{1}{\sqrt{n}}\sum_{i=1}^{\left\lfloor nt\right\rfloor} \sum_{\frac{\left\lfloor nt\right\rfloor}{n}<s \leq t} (1+|\alpha_i^n|^{p_1})|\Delta X_s |^{q_1} u(\Delta X_s) \leq \Big( \frac{1}{n}\sum_{i=1}^{\left\lfloor nt\right\rfloor}  (1+|\alpha_i^n|^{p_1})\Big)\Big(\sqrt{n}\sum_{\frac{\left\lfloor nt\right\rfloor}{n}<s \leq t}|\Delta X_s |^{q_1} \Big)\toop 0, 
\end{align*}
since the first factor is bounded in expectation and the second one converges in probability to $0$ (see \eqref{rest}). For the second summand of $\theta_n^{(3)}(H)$ we get
\begin{align*}
\Big|\frac{\mathbbm{1}_{\Omega_n(m)}}{\sqrt{n}} \sum_{i=1}^{\left\lfloor nt \right\rfloor} \sum_{p \in \mathcal{P}_t^n(m)} H\big(\alpha_i^n, n^{-1/2}R(n,p)\big)\Big| \leq \Big( \frac{1}{n}\sum_{i=1}^{\left\lfloor nt\right\rfloor}  (1+|\alpha_i^n|^{p_1})\Big)\Big(\mathbbm{1}_{\Omega_n(m)}\sum_{p \in \mathcal{P}_t^n(m)}\Big|\frac{R(n,p)^{q_1}}{n^{\frac{1}{2}(q_1-1)}}\Big|   \Big) \toop 0
\end{align*}
 as $n \to \infty$ because the first factor is again bounded in expectation and since $(R(n,p))_{n \in \mathbb{N}}$ is bounded in probability and $\mathcal{P}_t^n(m)$ finite almost surely. The remaining terms are $\theta_n^{(4)}(H)$ and the first summand of $\theta_n^{(3)}(H)$, for which we find by the mean value theorem
\begin{align*}
 \frac{\mathbbm{1}_{\Omega_n(m)}}{\sqrt{n}} \sum_{i=1}^{\left\lfloor nt \right\rfloor} \sum_{p \in \mathcal{P}_t^n(m)} \left\{ H\big(\alpha_i^n, \Delta X_{S_p}+n^{-1/2}R(n,p)\big)-  H(\alpha_i^n, \Delta X_{S_p})\right\} =\frac{\mathbbm{1}_{\Omega_n(m)}}{n} \sum_{i=1}^{\left\lfloor nt \right\rfloor} \sum_{p \in \mathcal{P}_t^n(m)} \partial_2 H\big(\alpha_i^n, \Delta X_{S_p}\big)R(n,p) \\
  + \Big( \frac{\mathbbm{1}_{\Omega_n(m)}}{n} \sum_{i=1}^{\left\lfloor nt \right\rfloor} \sum_{p \in \mathcal{P}_t^n(m)} \big(\partial_2 H\big(\alpha_i^n, \Delta X_{S_p}+\xi_i^n (p)\big) - \partial_2 H\big(\alpha_i^n, \Delta X_{S_p}\big) \big)R(n,p)\Big)
\end{align*}
for some $\xi_i^n(p)$ between $0$ and $R(n,p)/\sqrt{n}$. The latter term converges to $0$ in probability since we have $|\partial_{22} H(x,y)| \leq (1+|x|^{q}) (|y|^{q_1} +|y|^{q_1 -1} + |y|^{q_1-2}) u(y)$ for some $q\geq 0$ by the growth assumptions on $L$. Therefore, 
\begin{align*}
&\Big| \frac{\mathbbm{1}_{\Omega_n(m)}}{n} \sum_{i=1}^{\left\lfloor nt \right\rfloor} \sum_{p \in \mathcal{P}_t^n(m)} \big(\partial_2 H\big(\alpha_i^n, \Delta X_{S_p}+\xi_i^n (p)\big) - \partial_2 H\big(\alpha_i^n, \Delta X_{S_p}\big) \big)R(n,p)\Big| \\
=& \Big| \frac{\mathbbm{1}_{\Omega_n(m)}}{n} \sum_{i=1}^{\left\lfloor nt \right\rfloor} \sum_{p \in \mathcal{P}_t^n(m)} \partial_{22} H\big(\alpha_i^n, \Delta X_{S_p}+\tilde{\xi}_i^n (p)\big)  \xi_i^n (p) R(n,p)\Big| \\
\leq & \Big(\frac{1}{n}\sum_{i=1}^{\left\lfloor nt\right\rfloor}  (1+|\alpha_i^n|^{p_1}) \Big) \sum_{p \in \mathcal{P}_t^n(m)} K\frac{|R(n,p)|^2}{\sqrt{n}} \toop 0,
\end{align*}
where $\tilde{\xi}_i^n (p)$ is between $0$ and $R(n,p)/\sqrt{n}$. The last inequality holds since the jumps of $X$ are bounded and $|\tilde{\xi}_i^n (p)|\leq |R(n,p)|/\sqrt{n} \leq 2A$. The convergence holds because $R(n,p)$ is bounded in probability and $ \mathcal{P}_t^n(m)$ is finite almost surely. \qed
\end{subsection}
\end{section}

\end{document}